\title[Non-degeneracy conditions]{Non-degeneracy conditions for \\ braided finite tensor categories}
\author[K.~Shimizu]{Kenichi Shimizu}
\email{kshimizu@shibaura-it.ac.jp}
\address{Department of Mathematical Sciences \\
  Shibaura Institute of Technology \\
  307 Fukasaku, Minuma-ku, Saitama-shi, Saitama 337-8570, Japan.}
\thanks{The author is supported by JSPS KAKENHI Grant Number JP16K17568}
\date{}
\numberwithin{equation}{section}
\newtheorem{counter}{}[section]
\theoremstyle{definition}
\newtheorem{definition}         [counter]{Definition}
\newtheorem*{notation*}         {Notation}
\theoremstyle{plain}
\newtheorem{lemma}              [counter]{Lemma}
\newtheorem{proposition}        [counter]{Proposition}
\newtheorem{theorem}            [counter]{Theorem}
\newtheorem*{theorem*}          {Theorem}
\theoremstyle{remark}
\newtheorem{remark}             [counter]{Remark}
\newtheorem{example}            [counter]{Example}
\newcommand{\id}{\mathrm{id}}
\newcommand{\eval}{{\rm ev}}
\newcommand{\coev}{{\rm coev}}
\newcommand{\op}{\mathrm{op}}
\newcommand{\rev}{\mathrm{rev}}
\newcommand{\unitobj}{\mathbbm{1}}
\newcommand{\Hom}{\mathrm{Hom}}
\newcommand{\End}{\mathrm{End}}
\newcommand{\iHom}{\underline{\mathrm{Hom}}}
\newcommand{\LEX}{\textsc{Lex}}
\newcommand{\FPdim}{\operatorname{FPdim}}
\newcommand{\Mueg}{\text{\rm M\"ug}}
\newcommand{\Rep}{\operatorname{Rep}}
\newenvironment{xy-picture}{\xy}{\endxy}
\newcommand{\mycap}[1]{\POS{:
    (0,0); (0,#1) **@{} ?(.5)="_P1",
    (1,0); (1,#1) **@{} ?(.5)="_P2",
    (0,0); (1,0) **\crv{"_P1" & (0.5,#1) & "_P2"}}}
\newcommand{\mybendWithOpt}[2]{\POS{
    **@{} ?<="_P1" ?>="_P2",
    {"_P1"; p-(0,1) **@{} ?!{"_P2"; p-(1,0)}}="_P3",
    {"_P2"; p-(0,1) **@{} ?!{"_P1"; p-(1,0)}}="_P4",
    "_P1"; "_P3" **@{} ?(#1)="_P5",
    "_P2"; "_P4" **@{} ?(#1)="_P6",
    "_P1"; "_P2" **\crv{#2 "_P5" & "_P6"}
}}
\newcommand{\mybend}[1]{\POS{\mybendWithOpt{#1}{}}}
\newcommand{\mytrident}[3]{\POS{
    c; p-(#1,0) **@{} ?(.5)="_P1",
    "_P1"; p+(#1,-#3)+(#2,0) \mybend{.8}
    ?>; p+(#1,0) **\dir{-} ?(.5) @+ 
    ?>; p+(#1,#3)+(#2,0) \mybend{.8}
    ?>; p-(#1,0) **\dir{-} ?(.5) @+ 
    ?>; p-(#2,0) \mycap{1.25}
    ?>; p-(#1,0) **\dir{-} ?(.5) @+ 
    ?>; p-(#2,0) \mycap{1.25}
    ?>; p-(#1,0) **\dir{-} ?(.5) @+ 
}}
\newcommand{\pushrect}[2]{\POS{ c="_P1",
    "_P1"+(#1,-#2) @+, "_P1"+( 0,-#2) @+,
    "_P1"+(#1,  0) @+, "_P1"+( 0,  0) @+ }}
\newcommand{\makecoord}[2]{\POS{#1; p+(0, 1) **\dir{} ?!{#2; p+(1, 0)}}}
\begin{document}

\maketitle

\begin{abstract}
  For a braided finite tensor category $\mathcal{C}$ with unit object $1 \in \mathcal{C}$, Lyubashenko considered a certain Hopf algebra $\mathbb{F} \in \mathcal{C}$ endowed with a Hopf pairing $\omega: \mathbb{F} \otimes \mathbb{F} \to 1$ to define the notion of a `non-semisimple' modular tensor category. We say that $\mathcal{C}$ is non-degenerate if the Hopf pairing $\omega$ is non-degenerate. In this paper, we show that $\mathcal{C}$ is non-degenerate if and only if it is factorizable in the sense of Etingof, Nikshych and Ostrik, if and only if its M\"uger center is trivial, if and only if the linear map $\mathrm{Hom}_{\mathcal{C}}(1, \mathbb{F}) \to \mathrm{Hom}_{\mathcal{C}}(\mathbb{F}, 1)$ induced by the pairing $\omega$ is invertible. As an application, we prove that the category of Yetter-Drinfeld modules over a Hopf algebra in $\mathcal{C}$ is non-degenerate if and only if $\mathcal{C}$ is.
\end{abstract}

\section{Introduction}

The $S$-matrix of a ribbon fusion category $\mathcal{C}$ is the square matrix whose $(i, j)$-th entry is the invariant of the Hopf link colored with $i$ and $j$, where $i$ and $j$ run over the isomorphism classes of simple objects of $\mathcal{C}$. A {\em modular tensor category} is a ribbon fusion category whose $S$-matrix is invertible. This notion is widely studied in connection with conformal field theories, topological quantum field theories and quantum computing; see, {\it e.g.}, \cite{MR1292673,MR1797619,MR3242743} and references therein.

With motivation coming from conformal field theories and topological quantum field theories \cite{MR3146014}, and also from purely mathematical point of view, it is interesting to consider a `non-semisimple' generalization of the notion of a modular tensor category. Such a notion has been proposed and investigated by Lyubashenko \cite{MR1324034,MR1352517,MR1324033}: If $\mathcal{C}$ is a braided finite tensor category (which is not necessarily semisimple), then the coend
\begin{equation}
  \mathbb{F} = \int^{X \in \mathcal{C}} X \otimes X^*
\end{equation}
is a Hopf algebra in $\mathcal{C}$ and has the Hopf pairing $\omega_{\mathcal{C}}: \mathbb{F} \otimes \mathbb{F} \to \unitobj$ defined in terms of the braiding of $\mathcal{C}$. We say that $\mathcal{C}$ is {\em non-degenerate} if $\omega_{\mathcal{C}}$ is. Kerler and Lyubashenko \cite{MR1862634} used the term `modular tensor category' to mean a non-degenerate ribbon finite tensor category.

As explained in \cite{MR1862634}, a `non-semisimple' modular tensor category in this sense also yields an invariant of closed 3-manifolds and a projective representation of the mapping class group of a closed surface as in the semisimple case. On the other hand, there seems to be a difficulty in dealing with and analyzing the coend $\mathbb{F}$ and thus it is not easy to check whether a given braided finite tensor category is non-degenerate. The aim of this paper is to give conditions for a braided finite tensor category that are equivalent to the non-degeneracy.

Before describing our results, we first explain how the Hopf pairing $\omega_{\mathcal{C}}$ relates to the $S$-matrix in the semisimple case. We consider the map
\begin{equation*}
  \Omega_{\mathcal{C}}: \Hom_{\mathcal{C}}(\unitobj, \mathbb{F}) \to \Hom_{\mathcal{C}}(\mathbb{F}, \unitobj),
  \quad f \mapsto (f \otimes \id_{\mathbb{F}}) \circ \omega_{\mathcal{C}}
\end{equation*}
induced by $\omega_{\mathcal{C}}$. Following Takeuchi \cite{MR1850651}, we say that $\mathcal{C}$ is {\em weakly-factorizable} if $\Omega_{\mathcal{C}}$ is invertible. If $\mathcal{C}$ is a ribbon fusion category, the source and the target of $\Omega_{\mathcal{C}}$ have natural bases, and the $S$-matrix is in fact the representation matrix of $\Omega_{\mathcal{C}}$ with respect to these bases (see \S\ref{subsec:vs-S-matrix} for details). Thus the $S$-matrix of $\mathcal{C}$ is invertible if and only if $\mathcal{C}$ is weakly-factorizable.

Next, we recall that the non-degeneracy of a braided finite tensor category $\mathcal{C}$ is a generalization of the factorizability of a Hopf algebra \cite{MR1075721}; see \cite[\S7.4.6]{MR1862634} for the detail. On the other hand, Etingof, Nikshych and Ostrik \cite{MR2097289} introduced a certain functor $G: \mathcal{C} \boxtimes \mathcal{C} \to \mathcal{Z}(\mathcal{C})$ defined in terms of the braiding (see \S\ref{subsec:factorizability}), and then defined that $\mathcal{C}$ is {\em factorizable} if $G$ is an equivalence. This condition is based on Schneider's characterization of factorizability of finite-dimensional quasitriangular Hopf algebras \cite[Theorem 4.3]{MR1825894}.

Finally, we introduce the following notion: For a full subcategory $\mathcal{D}$ of $\mathcal{C}$, the {\em M\"uger centralizer} \cite{MR1966525} of $\mathcal{D}$ in $\mathcal{C}$, denoted by $\mathcal{D}'$, is defined to be the full subcategory of $\mathcal{C}$ consisting of all objects $X \in \mathcal{C}$ such that $\sigma_{Y,X} \circ \sigma_{X,Y} = \id_{X \otimes Y}$ for all $Y \in \mathcal{D}$, where $\sigma$ is the braiding of $\mathcal{C}$. We call $\mathcal{C}'$ {\em the M\"uger center} of $\mathcal{C}$, and say that {\em the M\"uger center of $\mathcal{C}$ is trivial} if every object of $\mathcal{C}'$ is isomorphic to the direct sum of finitely many copies of the unit object $\unitobj \in \mathcal{C}$.

Now our results in this paper are summarized as follows:

\begin{theorem}
  \label{thm:main}
  For a braided finite tensor category $\mathcal{C}$, the following assertions are equivalent:
  \begin{enumerate}
  \item $\mathcal{C}$ is non-degenerate.
  \item $\mathcal{C}$ is factorizable.
  \item $\mathcal{C}$ is weakly-factorizable.
  \item The M\"uger center of $\mathcal{C}$ is trivial.
  \end{enumerate}
\end{theorem}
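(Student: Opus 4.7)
The plan is to prove the theorem by running the cycle $(1) \Rightarrow (3) \Rightarrow (4) \Rightarrow (2) \Rightarrow (1)$. The first implication unpacks the definition of non-degeneracy in light of rigidity, while the last is a transport-of-structure argument across the Etingof--Nikshych--Ostrik equivalence $G$; the two middle implications, connecting the Hopf-theoretic side to the Müger center and then to the Drinfeld center, are where the real content sits.

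For $(1) \Rightarrow (3)$, observe that $\omega_{\mathcal{C}}$ corresponds by adjunction to a morphism $\widetilde{\omega}: \mathbb{F} \to \mathbb{F}^*$, and non-degeneracy of $\omega_{\mathcal{C}}$ means exactly that $\widetilde{\omega}$ is invertible. Under the rigidity isomorphism $\Hom_{\mathcal{C}}(\mathbb{F}, \unitobj) \cong \Hom_{\mathcal{C}}(\unitobj, \mathbb{F}^*)$, the map $\Omega_{\mathcal{C}}$ is identified with post-composition by $\widetilde{\omega}$, hence is bijective. For $(2) \Rightarrow (1)$, I would use the equivalence $G$ to transport $\mathbb{F}$ into $\mathcal{C} \boxtimes \mathcal{C}$; under this transport the pairing $\omega_{\mathcal{C}}$ becomes the canonical evaluation pairing of a rigid object, which is non-degenerate by construction, and then pulling back along $G^{-1}$ recovers non-degeneracy of $\omega_{\mathcal{C}}$.

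For $(3) \Rightarrow (4)$, my plan is to exploit the end/coend descriptions $\Hom_{\mathcal{C}}(\unitobj, \mathbb{F}) \cong \int_X \End_{\mathcal{C}}(X)$ and $\Hom_{\mathcal{C}}(\mathbb{F}, \unitobj) \cong \int_X \End_{\mathcal{C}}(X)$, identifying both sides with natural endomorphisms of $\id_{\mathcal{C}}$. In these coordinates $\Omega_{\mathcal{C}}$ is controlled by the double braiding $\sigma_{Y, X} \sigma_{X, Y}$. For $X \in \mathcal{C}'$ not isomorphic to a direct sum of copies of $\unitobj$, the Müger-centrality identity collapses this double braiding to the identity, producing a natural endomorphism of $\id_{\mathcal{C}}$ that lies in $\ker \Omega_{\mathcal{C}}$ or $\operatorname{coker} \Omega_{\mathcal{C}}$, contradicting weak factorizability. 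The contrapositive then gives $(3) \Rightarrow (4)$.

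The main obstacle is $(4) \Rightarrow (2)$. The strategy is to construct a right adjoint $G^{\mathrm{ra}}$ of $G$ as a coend in $\mathcal{C} \boxtimes \mathcal{C}$ and to analyze the unit $\unitobj_{\mathcal{C} \boxtimes \mathcal{C}} \to G^{\mathrm{ra}} G(\unitobj_{\mathcal{C} \boxtimes \mathcal{C}})$. Triviality of the Müger center should force this unit to be an isomorphism; a standard tensor-categorical bootstrap then promotes the isomorphism at the unit to an isomorphism at every object, giving full faithfulness of $G$. Essential surjectivity follows from the identity $\FPdim(\mathcal{Z}(\mathcal{C})) = \FPdim(\mathcal{C})^2 = \FPdim(\mathcal{C} \boxtimes \mathcal{C})$ combined with the fact that a full tensor subcategory of a finite tensor category of equal Frobenius--Perron dimension coincides with the ambient category. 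The hardest part is the unit computation: unlike the fusion setting, no character theory is available, so one is forced to write the unit as a dinatural integral and to extract the Müger-centrality condition from the requirement that this integral be an isomorphism, which I expect to occupy the technical bulk of the argument.
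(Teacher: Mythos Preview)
Your cycle $(1) \Rightarrow (3) \Rightarrow (4) \Rightarrow (2) \Rightarrow (1)$ matches the paper's overall logical flow, and $(1) \Rightarrow (3)$ is indeed immediate. But there are genuine gaps in the remaining three steps.

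\textbf{On $(3) \Rightarrow (4)$.} Your identification $\Hom_{\mathcal{C}}(\unitobj, \mathbb{F}) \cong \int_X \End_{\mathcal{C}}(X)$ is false in the non-semisimple setting. Only $\mathrm{CE}(\mathcal{C}) = \Hom_{\mathcal{C}}(\mathbb{F}, \unitobj)$ is canonically $\End(\id_{\mathcal{C}})$; the space $\mathrm{CF}(\mathcal{C}) = \Hom_{\mathcal{C}}(\unitobj, \mathbb{F})$ is the ``class function'' space, and $\Hom_{\mathcal{C}}(\unitobj,-)$ does not commute with coends. The two spaces can have different dimensions (Sweedler's four-dimensional Hopf algebra gives a symmetric $\mathcal{C}$ with $\dim\mathrm{CF}=2$ and $\dim\mathrm{CE}=1$), so your symmetric description of $\Omega_{\mathcal{C}}$ collapses. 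The paper instead introduces the sub-coend $\mathbb{F}' = \int^{X \in \mathcal{C}'} X^* \otimes X$ with its canonical monomorphism $\phi:\mathbb{F}'\hookrightarrow\mathbb{F}$, checks from the definition of $\omega_{\mathcal{C}}$ that $\omega_{\mathcal{C}}\circ(\phi\otimes\id) = (\varepsilon\phi)\otimes\varepsilon$, and concludes that the composite $\mathrm{CF}(\mathcal{C}')\hookrightarrow\mathrm{CF}(\mathcal{C})\xrightarrow{\Omega_{\mathcal{C}}}\mathrm{CE}(\mathcal{C})$ has one-dimensional image. Injectivity of $\Omega_{\mathcal{C}}$ then forces $\dim_k\mathrm{CF}(\mathcal{C}')=1$. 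Deducing $\mathcal{C}'\approx\mathrm{Vec}$ from this is the substantive step: one first bounds $\dim_k\mathrm{CF}$ from below by the number of simples fixed by $(-)^{**}$ (forcing $\mathcal{C}'$ to be unimodular), and then invokes a Maschke-type theorem from the integral theory of unimodular finite tensor categories to get semisimplicity.

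\textbf{On $(4) \Rightarrow (2)$.} Your unit-of-adjunction plan may be workable, but the paper bypasses it entirely via Frobenius--Perron dimensions. The key lemma is: for tensor full subcategories $\mathcal{A},\mathcal{B}$ of a finite tensor category such that $\otimes:\mathcal{A}\boxtimes\mathcal{B}\to\mathcal{C}$ is a tensor functor, one has
\[
\FPdim(\mathcal{A}\vee\mathcal{B})\,\FPdim(\mathcal{A}\cap\mathcal{B}) = \FPdim(\mathcal{A})\,\FPdim(\mathcal{B}),
\]
proved by computing the right adjoint of this tensor functor at $\unitobj$ as a coend and identifying its $\FPdim$ with $\FPdim(\mathcal{A}\cap\mathcal{B})$. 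Applied to the two copies $\mathcal{C}_{\pm}\subset\mathcal{Z}(\mathcal{C})$, whose intersection is $\mathcal{C}'$ and whose join is $\mathrm{Im}(G)$, this gives $\FPdim(\mathrm{Im}\,G)\cdot\FPdim(\mathcal{C}') = \FPdim(\mathcal{C})^2 = \FPdim(\mathcal{Z}(\mathcal{C}))$. Hence $\mathcal{C}'\approx\mathrm{Vec}$ is \emph{equivalent} to $G$ being an equivalence, with no unit computation required.

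\textbf{On $(2) \Rightarrow (1)$.} Your transport argument is ill-posed as written: $\mathbb{F}$ lives in $\mathcal{C}$, not in $\mathcal{Z}(\mathcal{C})$, so $G$ does not act on it, and it is unclear what ``the pairing becomes the canonical evaluation pairing'' would mean. The paper handles $(1)\Leftrightarrow(2)$ in one stroke by a different mechanism: it establishes equivalences $\mathcal{C}\boxtimes\mathcal{C}\approx\mathcal{C}^{\mathbb{F}}$ (right $\mathbb{F}$-comodules) and $\mathcal{Z}(\mathcal{C})\approx\mathcal{C}_{\mathbb{F}}$ (right $\mathbb{F}$-modules), and shows that under these identifications $G$ becomes the functor $\omega^{\natural}:\mathcal{C}^{\mathbb{F}}\to\mathcal{C}_{\mathbb{F}}$ induced by the pairing. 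Since $\omega^{\natural}$ is restriction of scalars along the map $\mathbb{F}\to{}^*\mathbb{F}$ determined by $\omega_{\mathcal{C}}$, it is an equivalence iff that map is invertible, i.e., iff $\omega_{\mathcal{C}}$ is non-degenerate.
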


This theorem follows from Theorems~\ref{thm:factor-nondege},~\ref{thm:factor-tricen} and \ref{thm:inj-tricen}. It has been known that these conditions are equivalent in the case where $\mathcal{C}$ is semisimple; see \cite{MR1741269,MR1966525,MR2609644,MR3242743}.
We therefore have obtained a non-semisimple generalization of characterizations of the non-degeneracy of a braided fusion category.

For a Hopf algebra $B \in \mathcal{C}$, the category $\mathcal{YD}(\mathcal{C})^B_B$ of Yetter-Drinfeld modules over $B$ is defined \cite{MR1456522}. The category $\mathcal{YD}(\mathcal{C})^B_B$ is in fact a braided finite tensor category.
As an application of the above theorem, we prove that $\mathcal{YD}(\mathcal{C})^B_B$ is non-degenerate if and only if $\mathcal{C}$ is (Theorem~\ref{thm:YD-br-FTC}). As we will explain in \S\ref{sec:fact-hopf-alg}, this result provides a new source of factorizable Hopf algebras which are not the Drinfeld double in general.

\subsection*{Organization of this paper}

The present paper is organized as follows: In Section~\ref{sec:preliminaries}, we fix conventions and recall basic results on finite tensor categories, Frobenius-Perron dimensions, and Hopf monads from \cite{MR1712872,MR1321145,MR3242743,MR2355605,MR2869176,MR2793022}.

In Section~\ref{sec:factor-vs-non-dege}, we prove the equivalence $(1) \Leftrightarrow (2)$ of the above theorem (Theorem~\ref{thm:factor-nondege}). Till the end of this introduction, we assume that $\mathcal{C}$ is a braided finite tensor category. We first recall the definitions of the non-degeneracy and the factorizability of $\mathcal{C}$ in detail. It turns out that the category $\mathcal{C}^{\mathbb{F}}$ of $\mathbb{F}$-comodules is equivalent to $\mathcal{C} \boxtimes \mathcal{C}$, and the category $\mathcal{C}_{\mathbb{F}}$ of $\mathbb{F}$-modules is equivalent to $\mathcal{Z}(\mathcal{C})$ (Lemmas~\ref{lem:F-comodules} and~\ref{lem:F-modules}). Let $\omega^{\natural}: \mathcal{C}^{\mathbb{F}} \to \mathcal{C}_{\mathbb{F}}$ be the functor induced by the Hopf pairing $\omega = \omega_{\mathcal{C}}$. We prove that the functor obtained by the composition
\begin{equation*}
  \mathcal{C} \boxtimes \mathcal{C}
  \xrightarrow{\quad \approx \quad}
  \mathcal{C}^{\mathbb{F}}
  \xrightarrow{\quad \omega^{\natural} \quad}
  \mathcal{C}_{\mathbb{F}}
  \xrightarrow{\quad \approx \quad}
  \mathcal{Z}(\mathcal{C})
\end{equation*}
is isomorphic to the functor $\mathcal{C} \boxtimes \mathcal{C} \to \mathcal{Z}(\mathcal{C})$ used to define the factorizability. The equivalence $(1) \Leftrightarrow (2)$ is easily proved once the above claim is verified.

In Section~\ref{sec:factor-vs-tricen}, we prove the equivalence $(2) \Leftrightarrow (4)$ of the above theorem (Theorem~\ref{thm:factor-tricen}). By using the central Hopf monad and its relation to the induction to the Drinfeld center, we prove the following formula for the Frobenius-Perron dimensions: If $\mathcal{A}$ and $\mathcal{B}$ are tensor full subcategories of $\mathcal{C}$ (see Definition \ref{def:ten-ful-subcat} for the precise meaning), then we have
\begin{equation*}
  \FPdim(\mathcal{A} \cap \mathcal{B})
  \FPdim(\mathcal{A} \vee \mathcal{B})
  = \FPdim(\mathcal{A}) \FPdim(\mathcal{B}),
\end{equation*}
where $\mathcal{A} \cap \mathcal{B}$ is the intersection of $\mathcal{A}$ and $\mathcal{B}$ and $\mathcal{A} \vee \mathcal{B}$ is the tensor full subcategory of $\mathcal{C}$ `generated' by them (Lemma~\ref{lem:FPdim-cap}). The equivalence $(2) \Leftrightarrow (4)$ is proved by applying this formula to two particular copies of $\mathcal{C}$ in $\mathcal{Z}(\mathcal{C})$.
The formula also yields the equations
\begin{equation*}
  \FPdim(\mathcal{D}) \FPdim(\mathcal{D}') = \FPdim(\mathcal{C}) \FPdim(\mathcal{D} \cap \mathcal{C}')
  \quad \text{and} \quad
  \mathcal{D}'' = \mathcal{D} \vee \mathcal{C}'
\end{equation*}
for a tensor full subcategory $\mathcal{D} \subset \mathcal{C}$ (Theorem~\ref{thm:dim-muger-cent}), which are already known in the semisimple case \cite[\S8.21]{MR3242743}.

Note that the implication $(1) \Rightarrow (3)$ of Theorem~\ref{thm:main} is obvious. In Section~\ref{sec:weak-factori}, we prove $(3) \Rightarrow (4)$ to complete the proof of the theorem. More precisely, we show that the M\"uger center $\mathcal{C}'$ is trivial if $\Omega_{\mathcal{C}}$ is injective (Theorem~\ref{thm:inj-tricen}). For this purpose, we consider the coend
\begin{equation*}
  \mathbb{F}' = \int^{X \in \mathcal{C}'} X^* \otimes X
\end{equation*}
whose `domain of integration' is different from $\mathbb{F}$. As we have shown in \cite{2015arXiv150401178S}, there is a canonical monomorphism $\phi: \mathbb{F}' \to \mathbb{F}$ that respects the universal dinatural transformations. By the definition of $\Omega_{\mathcal{C}}$, we see that the image of the map
\begin{equation*}
  \Hom_{\mathcal{C}}(\unitobj, \mathbb{F}')
  \xrightarrow{\quad \Hom_{\mathcal{C}}(\unitobj, \phi) \quad}
  \Hom_{\mathcal{C}}(\unitobj, \mathbb{F})
  \xrightarrow{\quad \Omega_{\mathcal{C}} \quad}
  \Hom_{\mathcal{C}}(\mathbb{F}, \unitobj)
\end{equation*}
is one-dimensional. Hence, if $\Omega_{\mathcal{C}}$ is injective, then we have
\begin{equation}
  \label{eq:intro-5-3}
  \dim_k \Hom_{\mathcal{C}}(\unitobj, \mathbb{F}') = 1
\end{equation}
As an application of the integral theory for unimodular finite tensor categories developed in \cite{2015arXiv150401178S}, we prove that \eqref{eq:intro-5-3} is equivalent to that the M\"uger center of $\mathcal{C}$ is trivial. We also give some observations on the rank of $\Omega_{\mathcal{C}}$.

In Section~\ref{sec:non-dege-YD}, we give an application of Theorem~\ref{thm:main}. Let $B$ be a Hopf algebra in $\mathcal{C}$. We first observe that the M\"uger center $\mathcal{C}'$ can be embedded into the category $\mathcal{YD}(\mathcal{C})^B_B$ of Yetter-Drinfeld modules. The main result of this section (Theorem~\ref{thm:YD-br-FTC}) states that $\mathcal{YD}(\mathcal{C})^B_B$ is a braided finite tensor category such that
\begin{equation*}
  \FPdim(\mathcal{YD}(\mathcal{C})^B_B) = \FPdim(\mathcal{C}) \FPdim(B)^2,
\end{equation*}
and the M\"uger center of $\mathcal{YD}(\mathcal{C})^B_B$ is precisely the category $\mathcal{C}'$. Thus, by our result, $\mathcal{YD}(\mathcal{C})^B_B$ is non-degenerate if and only if $\mathcal{C}$ is. Finally, in \S\ref{sec:fact-hopf-alg}, we explain how this result yields examples of factorizable Hopf algebras including so-called small quantum groups.

\subsection*{Acknowledgments}

The author is supported by JSPS KAKENHI Grant Number JP16K17568.

\section{Preliminaries}
\label{sec:preliminaries}

\subsection{Monoidal categories}

For the basic theory of monoidal categories, we refer the reader to \cite{MR1321145} and \cite{MR3242743}. All monoidal categories are assumed to be strict. Given a monoidal category $\mathcal{C} = (\mathcal{C}, \otimes, \unitobj)$ with tensor product $\otimes$ and unit object $\unitobj$, we set $\mathcal{C}^{\op} = (\mathcal{C}^{\op}, \otimes, \unitobj)$ and $\mathcal{C}^{\rev} = (\mathcal{C}, \otimes^{\rev}, \unitobj)$, where $(-)^{\op}$ means the opposite category and $\otimes^{\rev}$ is the reversed tensor product defined by $X \otimes^{\rev} Y = Y \otimes X$.

Our notation for duality follows \cite[\S2.10]{MR3242743}. Thus, for an object $X$ of a rigid monoidal category $\mathcal{C}$, we denote by $X^*$ the left dual object of $X$ with evaluation $\eval_X: X^* \otimes X \to \unitobj$ and coevaluation $\coev_X: \unitobj \to X \otimes X^*$. If $\mathcal{C}$ is rigid, then the assignment $X \mapsto X^*$ gives rise to an equivalence $(-)^*: \mathcal{C}^{\op} \to \mathcal{C}^{\rev}$ of monoidal categories. A quasi-inverse of $(-)^*$, denoted by ${}^*(-)$, is given by taking a right dual object. For simplicity, we assume that $(-)^*$ and ${}^*(-)$ are strict monoidal functors and mutually inverse to each other.

\subsection{Finite tensor categories}

Throughout, we work over an algebraically closed field $k$. Given an algebra $A$ over $k$, we denote by $\Rep(A)$ the category of finite-dimensional left $A$-modules. In particular, $\mathrm{Vec} := \Rep(k)$ is the category of finite-dimensional vector spaces. A {\em finite abelian category} is a $k$-linear category that is equivalent to $\Rep(A)$ for some finite-dimensional algebra $A$ over $k$. We note that, by the Eilenberg-Watts theorem, a $k$-linear functor between finite abelian categories has a left (right) adjoint if and only if it is left (right) exact.

\begin{lemma}
  \label{lem:finiteness-lemma}
  Let $\mathcal{A}$ be a finite abelian category over $k$, and let $T$ be a $k$-linear left exact comonad on $\mathcal{A}$. Then the category $\mathcal{A}^T$ of $T$-comodules is also a finite abelian category over $k$ such that the forgetful functor $U: \mathcal{A}^T \to \mathcal{A}$ preserves and reflects exact sequences.
\end{lemma}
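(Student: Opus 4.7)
The plan is to realize $\mathcal{A}^T$ as the representation category of a finite-dimensional $k$-algebra, whence the finite-abelianness and the exactness assertions for $U$ will follow immediately.

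Fix an equivalence $\mathcal{A} \simeq \Rep(A)$ with $A$ a finite-dimensional $k$-algebra. Since $T$ is a $k$-linear left exact endofunctor of $\mathcal{A}$, the Eilenberg-Watts result recalled in the excerpt produces a left adjoint $L \dashv T$; being a left adjoint, $L$ is $k$-linear and right exact, and the classical Eilenberg-Watts argument yields a natural isomorphism $L(M) \cong P \otimes_A M$, where $P := L(A)$ is made into an $(A,A)$-bimodule via the functoriality of $L$ applied to the right multiplication maps $r_a: A \to A$. Since $L(A)$ lies in $\Rep(A)$, the bimodule $P$ is finite-dimensional over $k$. Transporting the comonad structure $(\delta, \varepsilon)$ on $T$ along the mate correspondence for $L \dashv T$ endows $L$ with a monad structure, which unwinds into an associative unital $k$-algebra structure on $P$ together with a $k$-algebra homomorphism $\iota: A \to P$.

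A $T$-coaction $\delta_M: M \to T(M)$ on an object $M \in \mathcal{A}$ is, by adjunction, the same datum as an $A$-linear map $\mu_M: P \otimes_A M \to M$, and the counit and coassociativity of $\delta_M$ translate precisely into the unit and associativity axioms for $\mu_M$. Thus $T$-comodule structures on $M$ are in bijection with left $P$-module structures extending the $A$-action along $\iota$, giving an equivalence $\mathcal{A}^T \simeq \Rep(P)$. Since $P$ is a finite-dimensional $k$-algebra, $\Rep(P)$ is a finite abelian category. Under this equivalence, the forgetful functor $U$ corresponds to restriction of scalars along $\iota$, which is exact and faithful; a faithful exact functor reflects exact sequences, so $U$ both preserves and reflects exactness.

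The most delicate point is the mate correspondence for $L \dashv T$: translating the comonad $(\delta, \varepsilon)$ into a monad $(\mu^{L}, \eta^{L})$ on $L$ and verifying that $T$-coactions correspond to $P$-module structures under this bijection. These verifications reduce to standard doctrinal-adjunction diagram chases, but they are where the bulk of the care is concentrated.
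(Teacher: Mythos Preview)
Your argument is correct, but it takes a different route from the paper's. The paper works intrinsically with the comonad: it cites the dual of Eilenberg--Moore to get that $\mathcal{A}^T$ is abelian with $U$ preserving and reflecting exact sequences, and then observes that if $P$ is a projective generator of $\mathcal{A}^{\op}$ (equivalently, an injective cogenerator of $\mathcal{A}$), then $\Hom_{\mathcal{A}^T}(-,T(P)) \cong \Hom_{\mathcal{A}}(U(-),P)$ shows $T(P)$ is an injective cogenerator of $\mathcal{A}^T$ with finite-dimensional endomorphism ring, whence $\mathcal{A}^T$ is finite. No algebra $A$ with $\mathcal{A}\simeq\Rep(A)$ is ever chosen.

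Your approach instead passes to the left adjoint $L$ of $T$, transports the comonad to a monad via mates, and realizes $\mathcal{A}^T$ concretely as $\Rep(P)$ for the finite-dimensional $A$-ring $P=L(A)$. This buys you an explicit presentation of $\mathcal{A}^T$ and makes the exactness and faithfulness of $U$ immediate as restriction along $\iota:A\to P$. The cost is the doctrinal-adjunction verification that the comonad axioms on $T$ mate to the monad axioms on $L$ and that $T$-coalgebras correspond to $L$-algebras; you correctly flag this as the point requiring care, and it does go through (it is the same mechanism that identifies $C$-comodules with $C^*$-modules for a finite-dimensional coalgebra $C$). The paper's argument sidesteps this calculus entirely at the price of being less constructive.
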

\begin{proof}
  By the dual of \cite[Proposition 5.3]{MR0184984}, the category $\mathcal{A}^T$ is a $k$-linear abelian category such that $U$ preserves and reflects exact sequences. Now let $P$ be a projective generator of $\mathcal{A}^{\op}$ (which exists since also $\mathcal{A}^{\op}$ is finite). Then, since
  \begin{equation*}
    \Hom_{\mathcal{A}^T}(-, T(P)) \cong \Hom_{\mathcal{A}}(U(-), P) = \Hom_{\mathcal{A}^{\op}}(P, -) \circ U,
  \end{equation*}
  the $T$-comodule $T(P)$ is a projective generator of $(\mathcal{A}^T)^{\op}$. By the standard argument, one can show that $(\mathcal{A}^T)^{\op}$ is a finite abelian category. Hence so is $\mathcal{A}^T$.
\end{proof}

A {\em finite tensor category} \cite{MR2119143} is a rigid monoidal category such that $\mathcal{C}$ is a finite abelian category, the tensor product of $\mathcal{C}$ is $k$-linear in each variable, and the unit object of $\mathcal{C}$ is a simple object. By a {\em tensor functor}, we mean a $k$-linear exact strong monoidal functor between finite tensor categories.

Let $\mathcal{A}$ and $\mathcal{B}$ be $k$-linear categories. A $k$-linear functor $F: \mathcal{A} \to \mathcal{B}$ is said to be {\em dominant} ($=$ surjective \cite{MR3242743}) if every object of $\mathcal{B}$ is a subobject of $F(X)$ for some $X \in \mathcal{A}$. Suppose that $\mathcal{A}$ and $\mathcal{B}$ are finite tensor categories. A tensor functor $F: \mathcal{A} \to \mathcal{B}$ is dominant if and only if every object of $\mathcal{D}$ is a quotient of $F(X)$ for some $X \in \mathcal{C}$, if and only if a left adjoint of $F$ is faithful, if and only if a right adjoint of $F$ is faithful \cite[Lemma 3.1]{MR2863377}.

\subsection{Frobenius-Perron dimension}

Let $\mathcal{C}$ be a finite tensor category. For $X \in \mathcal{C}$, we denote by $\FPdim(X) \in \mathbb{R}_{+}$ the {\em Frobenius-Perron dimension} of $X$ \cite{MR2119143}. The Frobenius-Perron dimension of $\mathcal{C}$ is defined by
\begin{equation*}
  \FPdim(\mathcal{C}) := \sum_{i = 0}^m \FPdim(V_i) \FPdim(P_i),
\end{equation*}
where $\{ V_i \}_{i = 0}^m$ is the complete set of representatives of isomorphism classes of simple objects of $\mathcal{C}$ and $P_i$ is the projective cover of $V_i$.

For the basic properties of the Frobenius-Perron dimensions, we refer the reader to \cite[Chapter 6]{MR3242743}. We recall the following useful formula which is important in this paper: Let $F: \mathcal{A} \to \mathcal{B}$ be a dominant tensor functor between finite tensor categories. If $I$ is right adjoint to $F$, then we have
\begin{equation}
  \label{eq:FPdim-ind}
  \FPdim(I(X)) = \frac{\FPdim(\mathcal{A})}{\FPdim(\mathcal{B})} \FPdim(X)
\end{equation}
for all $X \in \mathcal{B}$ \cite[Lemma 6.2.4]{MR3242743}. This formula holds also in the case where $I$ is {\em left} adjoint to $F$, since then $X \mapsto {}^*I(X^*)$ is {\em right} adjoint to $F$.

\subsection{Ends and coends}

Let $\mathcal{A}$ and $\mathcal{V}$ be categories, and let $P$ and $Q$ be functors from $\mathcal{A}^{\op} \times \mathcal{A}$ to $\mathcal{V}$. A {\em dinatural transformation} $\xi$ from $P$ to $Q$ is a family
\begin{equation*}
  \xi = \{ \xi_X: P(X, X) \to Q(X, X) \}_{X \in \mathcal{A}}
\end{equation*}
of morphisms in $\mathcal{V}$ satisfying
\begin{equation*}
  Q(X, f) \circ \xi_X \circ P(f, X)
  = Q(f, Y) \circ \xi_Y \circ P(Y, f)
\end{equation*}
for all morphisms $f: X \to Y$ in $\mathcal{A}$. For a while, we regard an object $M \in \mathcal{V}$ as a constant functor from $\mathcal{A}^{\op} \times \mathcal{A}$ to $\mathcal{V}$. Then an {\em end} of $Q$ is an object $E \in \mathcal{V}$ endowed with a dinatural transformation $\xi$ from $E$ to $Q$ satisfying the following universal property: For every dinatural transformation $\xi'$ from $E' \in \mathcal{V}$ to $Q$, there exists a unique morphism $\phi: E' \to E$ such that $\xi'_X = \xi_X \circ \phi$ for all $X \in \mathcal{A}$. A {\em coend} of $P$ is an object $C \in \mathcal{V}$ endowed with a dinatural transformation from $P$ to $C$ satisfying a similar universal property. An end of $Q$ and a coend of $P$ are written as
\begin{equation*}
  \int_{X \in \mathcal{A}} Q(X, X)
  \quad \text{and} \quad
  \int^{X \in \mathcal{A}} P(X, X),
\end{equation*}
respectively. See \cite[IX]{MR1712872} for basic properties of ends and coends. We also note the following lemma \cite[Lemma 3.9]{MR2869176}:

\begin{lemma}
  \label{lem:coend-adj}
  Let $\mathcal{A}$, $\mathcal{B}$ and $\mathcal{V}$ be categories, let $G: \mathcal{A} \to \mathcal{B}$ be a functor with left adjoint $F: \mathcal{B} \to \mathcal{A}$, and let $P: \mathcal{A}^{\op} \times \mathcal{B} \to \mathcal{V}$ be a functor. Then we have
  \begin{equation*}
    \int^{X \in \mathcal{A}} P(X, G(X)) \cong \int^{X \in \mathcal{B}} P(F(X), X),
  \end{equation*}
  meaning that if either one of these coends exists, then both exist and they are canonically isomorphic.
\end{lemma}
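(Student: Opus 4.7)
The plan is to reduce the statement to a natural bijection between the corresponding sets of dinatural families. For each $V \in \mathcal{V}$ regarded as a constant functor, I would exhibit a bijection between dinatural transformations $P(-, G-) \Rightarrow V$ indexed by $\mathcal{A}$ and dinatural transformations $P(F-, -) \Rightarrow V$ indexed by $\mathcal{B}$, natural in $V$. Once this is done, both coends corepresent the same functor $\mathcal{V} \to \mathrm{Set}$, so by the Yoneda lemma existence of either coend implies existence of the other, and the two are canonically isomorphic.

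Write $\eta : \id_{\mathcal{B}} \Rightarrow GF$ and $\varepsilon : FG \Rightarrow \id_{\mathcal{A}}$ for the unit and counit of $F \dashv G$. To a dinatural family $\xi_X : P(X, G(X)) \to V$ over $\mathcal{A}$, I associate
\[
  \widetilde{\xi}_Y \;=\; \xi_{F(Y)} \circ P(F(Y),\, \eta_Y) \colon P(F(Y), Y) \longrightarrow V \qquad (Y \in \mathcal{B}),
\]
and conversely, to a dinatural family $\theta_Y : P(F(Y), Y) \to V$ over $\mathcal{B}$, I associate
\[
  \widetilde{\theta}_X \;=\; \theta_{G(X)} \circ P(\varepsilon_X,\, G(X)) \colon P(X, G(X)) \longrightarrow V \qquad (X \in \mathcal{A}).
\]
Naturality in $V$ is immediate from the definitions. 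For mutual inversion, unfolding $\widetilde{\widetilde{\xi}}_X$ and factoring through $P(X, \eta_{G(X)})$ on the right, the dinaturality of $\xi$ at $\varepsilon_X : FG(X) \to X$ rewrites the result as $\xi_X \circ P(X, G(\varepsilon_X) \circ \eta_{G(X)})$, which collapses to $\xi_X$ by the triangle identity $G(\varepsilon_X) \circ \eta_{G(X)} = \id_{G(X)}$; the opposite composition is handled symmetrically via the other triangle identity.

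The step I expect to require the most care is verifying that $\widetilde{\xi}$ is itself dinatural over $\mathcal{B}$ (and symmetrically for $\widetilde{\theta}$). For a morphism $g : Y \to Y'$ in $\mathcal{B}$, the dinaturality equation for $\widetilde{\xi}$ should be reassembled from the dinaturality equation for $\xi$ applied to $F(g) : F(Y) \to F(Y')$ together with the naturality square of $\eta$ at $g$; concretely, both sides should be rewritten so that a common factor $P(F(Y'), \eta_Y)$ appears on the right, after which the remaining identity is exactly the dinaturality square for $\xi$ at $F(g)$. The manipulation is routine, but one has to track several instances of the mixed-variance bifunctor $P$ and commute maps between its two slots, and this is the main bookkeeping in the argument.
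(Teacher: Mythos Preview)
Your argument is correct and is the standard direct proof of this ``coend version of the Yoneda/adjunction lemma.'' The paper does not actually give a proof of this lemma; it simply quotes it as \cite[Lemma~3.9]{MR2869176}. So there is nothing to compare against beyond noting that your approach is exactly the one underlying that reference: set up the bijection on cowedges using the unit and counit, check dinaturality via naturality of $\eta$ together with dinaturality of $\xi$ at $F(g)$, and invoke the triangle identities for the two inverse composites. Your sketch of each of these steps is accurate, including the place you flag as the main bookkeeping (both sides of the dinaturality square for $\widetilde{\xi}$ factor through $P(F(Y'),\eta_Y)$, reducing to dinaturality of $\xi$ at $F(g)$).
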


\subsection{Deligne tensor product}

The Deligne tensor product $\mathcal{A} \boxtimes \mathcal{B}$ of finite abelian categories $\mathcal{A}$ and $\mathcal{B}$ is a finite abelian category having a certain universal property for functors from $\mathcal{A} \times \mathcal{B}$ that are $k$-linear and right exact in each variable. If $\mathcal{A}$ and $\mathcal{B}$ are (braided) finite tensor categories, then $\mathcal{A} \boxtimes \mathcal{B}$ is naturally a (braided) finite tensor category; see \cite{MR1106898} or \cite[\S1.11]{MR3242743}.

For finite abelian categories $\mathcal{A}$ and $\mathcal{B}$, we denote by $\LEX(\mathcal{A}, \mathcal{B})$ the category of $k$-linear left exact functors from $\mathcal{A}$ to $\mathcal{B}$. Recall that the Deligne tensor product $\mathcal{A} \boxtimes \mathcal{B}$ also has a universal property for functors from $\mathcal{A} \times \mathcal{B}$ that are $k$-linear and {\em left} exact in each variable. Using this universal property, we define a $k$-linear {\em left} exact functor
\begin{equation}
  \label{eq:def-Phi}
  \Phi_{\mathcal{A}, \mathcal{B}}:
  \mathcal{A}^{\op} \boxtimes \mathcal{B} \to \LEX(\mathcal{A}, \mathcal{B}),
  \quad X \boxtimes Y \mapsto \Hom_{\mathcal{A}}(X, -) \cdot Y,
\end{equation}
where ``$\cdot$'' means the canonical action of $\mathrm{Vec}$ on $\mathcal{B}$. The Eilenberg-Watts theorem implies that $\Phi_{\mathcal{A}, \mathcal{B}}$ is an equivalence. Moreover, the coend
\begin{equation}
  \label{eq:def-Phi-inverse}
  \overline{\Phi}_{\mathcal{A}, \mathcal{B}}(F) = \int^{X \in \mathcal{A}} \! X \boxtimes F(X)
\end{equation}
exists for all $F \in \LEX(\mathcal{A}, \mathcal{B})$, and the assignment $F \mapsto \overline{\Phi}_{\mathcal{A}, \mathcal{B}}(F)$ is in fact a quasi-inverse of the equivalence $\Phi_{\mathcal{A}, \mathcal{B}}$ (see \cite{2014arXiv1402.3482S} for the detail).

Now let $\mathcal{C}$ be a finite tensor category, and let $D = (-)^*$ be the left duality functor on $\mathcal{C}$. By considering the image of $F \in \LEX(\mathcal{C}, \mathcal{C})$ under the functor
\begin{equation*}
  \LEX(\mathcal{C}, \mathcal{C})
  \xrightarrow{\quad \overline{\Phi} \quad}
  \mathcal{C}^{\op} \boxtimes \mathcal{C}
  \xrightarrow{\quad D \boxtimes \id \quad}
  \mathcal{C} \boxtimes \mathcal{C}
  \xrightarrow{\quad \otimes \quad} \mathcal{C},
\end{equation*}
we obtain the following result:

\begin{lemma}
  \label{lem:coend-exist}
  The coend $\displaystyle \int^{X \in \mathcal{C}} \! X^* \otimes F(X)$ exists for each $F \in \LEX(\mathcal{C}, \mathcal{C})$.
\end{lemma}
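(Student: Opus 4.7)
The plan, already sketched in the paragraph preceding the lemma, is to realize the claimed coend as the image of $\overline{\Phi}_{\mathcal{C}, \mathcal{C}}(F) = \int^{X \in \mathcal{C}} X \boxtimes F(X)$ under a colimit-preserving functor $\mathcal{C}^{\op} \boxtimes \mathcal{C} \to \mathcal{C}$ that carries $X \boxtimes Y$ to $X^* \otimes Y$. The existence of $\overline{\Phi}_{\mathcal{C}, \mathcal{C}}(F)$ itself is already on record, being precisely the coend~\eqref{eq:def-Phi-inverse}.

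First I would verify that $G := \otimes \circ (D \boxtimes \id_{\mathcal{C}})$ is a well-defined $k$-linear right exact functor from $\mathcal{C}^{\op} \boxtimes \mathcal{C}$ to $\mathcal{C}$ with $G(X \boxtimes Y) = X^* \otimes Y$. Indeed, the duality $D = (-)^*: \mathcal{C}^{\op} \to \mathcal{C}$ is an exact equivalence, so $D \boxtimes \id_{\mathcal{C}}$ is a $k$-linear exact equivalence $\mathcal{C}^{\op} \boxtimes \mathcal{C} \to \mathcal{C} \boxtimes \mathcal{C}$; and the tensor product is $k$-linear and exact in each variable by rigidity, hence induces a $k$-linear right exact functor $\mathcal{C} \boxtimes \mathcal{C} \to \mathcal{C}$ via the universal property of the Deligne tensor product.

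Next I would invoke the consequence of the Eilenberg-Watts theorem recalled earlier in this section: a $k$-linear right exact functor between finite abelian categories admits a right adjoint. Thus $G$ is a left adjoint, and so preserves every colimit that exists in its source. Applying $G$ to the universal dinatural transformation $\{i_X: X \boxtimes F(X) \to \overline{\Phi}_{\mathcal{C}, \mathcal{C}}(F)\}_{X \in \mathcal{C}}$ therefore exhibits $G(\overline{\Phi}_{\mathcal{C}, \mathcal{C}}(F))$, together with the dinatural family $\{G(i_X): X^* \otimes F(X) \to G(\overline{\Phi}_{\mathcal{C}, \mathcal{C}}(F))\}_{X \in \mathcal{C}}$, as the desired coend $\int^{X \in \mathcal{C}} X^* \otimes F(X)$. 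The only non-obvious point is the preservation of this coend by $G$, which is secured cleanly by the adjoint-functor principle and avoids any direct verification of the universal property; the remainder is routine bookkeeping, so I anticipate no genuine obstacle.
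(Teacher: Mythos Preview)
Your proposal is correct and follows exactly the approach the paper indicates in the paragraph preceding the lemma: realize the desired coend as the image of $\overline{\Phi}_{\mathcal{C},\mathcal{C}}(F)$ under the right exact functor $\otimes \circ (D \boxtimes \id_{\mathcal{C}})$. Your explicit justification that this functor preserves the coend---via Eilenberg--Watts it admits a right adjoint and hence preserves colimits---is a detail the paper leaves implicit, but the route is the same.
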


\subsection{The Drinfeld center}
\label{subsec:dri-cen}

Let $\mathcal{C}$ be a monoidal category, and let $\mathcal{S}$ be a monoidal full subcategory of $\mathcal{C}$. The centralizer of $\mathcal{S}$ is the monoidal category $\mathcal{Z}(\mathcal{S}; \mathcal{C})$ defined as follows: An object of this category is a pair $(V, c)$ consisting of an object $V$ of $\mathcal{C}$ and a natural isomorphism $c_{X}: V \otimes X \to X \otimes V$ ($X \in \mathcal{S}$) satisfying
\begin{equation*}
  c_{X \otimes Y} = (\id_X \otimes c_Y) \circ (c_{X} \otimes \id_Y)
\end{equation*}
for all objects $X, Y \in \mathcal{S}$. A morphism $f: (V, c) \to (W, d)$ in $\mathcal{Z}(\mathcal{S}; \mathcal{C})$ is a morphism $f: V \to W$ in $\mathcal{C}$ such that $(\id_X \otimes f) \circ c_X = d_X \circ (f \otimes \id_X)$ for all $X \in \mathcal{S}$. The tensor product of objects of $\mathcal{Z}(\mathcal{S}; \mathcal{C})$ is given by the formula
\begin{equation*}
  (V, c) \otimes (W, d) = (V \otimes W, (c \otimes \id_{W})(\id_V \otimes d))
\end{equation*}
for $(V, c), (W, d) \in \mathcal{Z}(\mathcal{S}; \mathcal{C})$. The composition and the tensor product of morphisms are defined in an obvious way.

\begin{definition}
  We call $\mathcal{Z}(\mathcal{C}) := \mathcal{Z}(\mathcal{C}; \mathcal{C})$ the {\em Drinfeld center} of $\mathcal{C}$.
\end{definition}

The Drinfeld center has the braiding $\Sigma$ given by $\Sigma_{(V,c), (W, d)} = c_{W}$.

\subsection{The central Hopf monad}
\label{subsec:Hopf-monad-Z}

Let $\mathcal{C}$ be a finite tensor category. A {\em bimonad} on $\mathcal{C}$ is a comonoidal ($=$ oplax monoidal) endofunctor on $\mathcal{C}$ endowed with a structure of a monad such that the multiplication and the unit are comonoidal natural transformations. A {\em Hopf monad} on $\mathcal{C}$ is a bimonad on $\mathcal{C}$ admitting an antipode; see \cite{MR2355605,MR2869176,MR2793022} for the basic theory of Hopf monads.

The quantum double of the identity Hopf monad, which we call the central Hopf monad, plays an important role in this paper. It is defined as follows: Applying Lemma~\ref{lem:coend-exist} to the functor $F = V \otimes (-)$, we see that the coend
\begin{equation}
  Z_{\mathcal{C}}(V) = \int^{X \in \mathcal{C}} X^* \otimes V \otimes X
\end{equation}
exists for each $V \in \mathcal{C}$. Let $i_{\mathcal{C}}(V; X): X^* \otimes V \otimes X \to Z_{\mathcal{C}}(V)$ ($V, X \in \mathcal{C}$) denote the universal dinatural transformation for the coend. By the parameter theorem for coends \cite[IX.7]{MR1712872}, the assignment $V \mapsto Z_{\mathcal{C}}(V)$ extends to an endofunctor on $\mathcal{C}$ such that the morphism $i_{\mathcal{C}}(V; X)$ is natural in $V$. We now define
\begin{equation*}
  Z_{\mathcal{C}}^{(0)}:
  Z_{\mathcal{C}}(\unitobj) \to \unitobj
  \quad \text{and} \quad
  Z_{\mathcal{C}}^{(2)}(V, W):
  Z_{\mathcal{C}}(V \otimes W) \to Z_{\mathcal{C}}(V) \otimes Z_{\mathcal{C}}(W)
\end{equation*}
for $V, W \in \mathcal{C}$ to be the unique morphisms such that $Z_{\mathcal{C}}^{(0)} \circ i(\unitobj; X) = \eval_X$ and
\begin{equation}
  \label{eq:Hopf-monad-Z-def-Z2}
  \begin{aligned}
    & Z_{\mathcal{C}}^{(2)}(V, W) \circ i_{\mathcal{C}}(V \otimes W; X) \\
    & \quad = (i_{\mathcal{C}}(V; X) \otimes i_{\mathcal{C}}(W; X))
    \circ (\id_{X^*} \otimes \id_{V} \otimes \coev_X \otimes \id_{W} \otimes \id_{X})    
  \end{aligned}
\end{equation}
for all $X \in \mathcal{C}$. Then $Z_{\mathcal{C}} = (Z_{\mathcal{C}}, Z_{\mathcal{C}}^{(2)}, Z_{\mathcal{C}}^{(0)})$ is a comonoidal endofunctor on $\mathcal{C}$. With the help of the Fubini theorem for coends \cite[IX.8]{MR1712872}, we define $\mu: Z_{\mathcal{C}} \circ Z_{\mathcal{C}} \to Z_{\mathcal{C}}$ to be the unique morphism such that
\begin{equation}
  \label{eq:Hopf-monad-Z-def-mu}
  \mu_{V} \circ i_{\mathcal{C}}(Z_{\mathcal{C}}(V); Y) \circ (\id_{Y^*} \otimes i_{\mathcal{C}}(V; X) \otimes \id_Y)
  = i_{\mathcal{C}}(V; X \otimes Y)
\end{equation}
for $V, X, Y \in \mathcal{C}$. Finally, we define $\eta: \id_{\mathcal{C}} \to Z_{\mathcal{C}}$ by $\eta_V = i_{\mathcal{C}}(V; \unitobj)$ for $V \in \mathcal{C}$. Then the triple $(Z_{\mathcal{C}}, \mu, \eta)$ is a monad on $\mathcal{C}$, which is in fact a quasitriangular Hopf monad on $\mathcal{C}$ (we omit the description of other structure morphisms since we will not use them; see \cite{MR2869176} for details).

\begin{definition}
  We call $Z_{\mathcal{C}}$ the {\em central Hopf monad} on $\mathcal{C}$.
\end{definition}

Given an object $(V, c) \in \mathcal{Z}(\mathcal{C})$, we define $a: Z_{\mathcal{C}}(V) \to V$ in $\mathcal{C}$ by
\begin{equation}
  \label{eq:center-to-Z-mod}
  a \circ i_{\mathcal{C}}(V; X) = (\eval_X \otimes \id_V) \circ (\id_{X^*} \otimes c_X)
\end{equation}
for $X \in \mathcal{C}$. Then the pair $(V, a)$ is a $Z_{\mathcal{C}}$-module. This correspondence allows us to identify $\mathcal{Z}(\mathcal{C})$ with the category of $Z_{\mathcal{C}}$-modules.

\section{Non-degeneracy and factorizability}
\label{sec:factor-vs-non-dege}

\subsection{Non-degeneracy}
\label{subsec:non-dege}

Let $\mathcal{C}$ be a braided finite tensor category with braiding $\sigma$, and let $Z_{\mathcal{C}}$ be the central Hopf monad on $\mathcal{C}$. Then $\mathbb{F} := Z_{\mathcal{C}}(\unitobj)$ is a coalgebra with the comultiplication $\Delta$ and the counit $\varepsilon$ given respectively by
\begin{equation}
  \label{eq:F-def-coalg}
  \Delta = Z_{\mathcal{C}}^{(2)}(\unitobj, \unitobj)
  \quad \text{and} \quad
  \varepsilon = Z_{\mathcal{C}}^{(0)}.
\end{equation}
The coalgebra $\mathbb{F}$ is in fact a Hopf algebra in $\mathcal{C}$ with structure morphisms defined as follows: The multiplication is the unique morphism $m: \mathbb{F} \otimes \mathbb{F} \to \mathbb{F}$ such that
\begin{equation}
  \label{eq:F-def-mult}
  m \circ (i(\unitobj; X) \otimes i(\unitobj; Y))
  = i(\unitobj; X \otimes Y) \circ (\sigma_{X^* \otimes X, Y^*} \otimes Y)
\end{equation}
for all $X, Y \in \mathcal{C}$. The unit $u: \unitobj \to \mathbb{F}$ is given by $u = i(\unitobj)$. The antipode of $\mathbb{F}$ is the unique morphism $S: \mathbb{F} \to \mathbb{F}$ such that, for all $X \in \mathcal{C}$,
\begin{equation}
  \label{eq:F-def-antipode}
  S \circ i(\unitobj; X) = (\eval_X \otimes i(\unitobj; X^*)) \circ (X^* \otimes \sigma_{X^{**} \otimes X^*, X}) \circ (\coev_{X^*} \otimes X^{*} \otimes X).
\end{equation}
There is a unique morphism $\omega_{\mathcal{C}}: \mathbb{F} \otimes \mathbb{F} \to \unitobj$ such that
\begin{equation}
  \label{eq:F-def-omega}
  \omega_{\mathcal{C}} \circ (i(\unitobj; X) \otimes i(\unitobj; Y))
  = (\eval_X \otimes \eval_Y) \circ (X^* \otimes \sigma_{Y^*,X} \sigma_{X,Y^*} \otimes Y)
\end{equation}
for all $X, Y \in \mathcal{C}$. The morphism $\omega = \omega_{\mathcal{C}}$ is a Hopf pairing in the sense that the following equations hold:
\begin{gather}
  \label{eq:F-Hopf-pairing-1}
  \omega \circ (m \otimes \mathbb{F})
  = \omega \circ (\mathbb{F} \otimes \omega \otimes \mathbb{F}) \circ (\mathbb{F} \otimes \Delta), \\
  \label{eq:F-Hopf-pairing-2}
  \omega \circ (\mathbb{F} \otimes m)
  = \omega \circ (\mathbb{F} \otimes \omega \otimes \mathbb{F})
  \circ (\Delta \otimes \mathbb{F}), \\
  \label{eq:F-Hopf-pairing-3}
  \omega \circ (u \otimes \mathbb{F})
  = \varepsilon = \omega \circ (\mathbb{F} \otimes u).
\end{gather}
See, {\it e.g.}, \cite{MR1352517} for the detail. Now we introduce the following terminology:

\begin{definition}
  \label{def:non-dege}
  We say that the braided finite tensor category $\mathcal{C}$ is {\em non-degenerate} if the Hopf pairing $\omega = \omega_{\mathcal{C}}$ is non-degenerate in the sense that the composition
  \begin{equation}
    \label{eq:omega-sharp}
    \mathbb{F}
    \xrightarrow{\quad \id \otimes \coev \quad}
    \mathbb{F} \otimes \mathbb{F} \otimes \mathbb{F}^*
    \xrightarrow{\quad \omega \otimes \id \quad}
    \mathbb{F}^*
  \end{equation}
  is an isomorphism in $\mathcal{C}$, or, equivalently, the composition
  \begin{equation}
    \label{eq:omega-flat}
    \mathbb{F}
    \xrightarrow{\quad \coev \otimes \id \quad}
    {}^*\mathbb{F} \otimes \mathbb{F} \otimes \mathbb{F}
    \xrightarrow{\quad \id \otimes \omega \quad}
    {}^* \mathbb{F}
  \end{equation}
  is an isomorphism in $\mathcal{C}$.
\end{definition}

\subsection{Factorizability}
\label{subsec:factorizability}

Let $\mathcal{C}$ be a braided tensor category with braiding $\sigma$, and let $\overline{\mathcal{C}}$ denote the finite tensor category $\mathcal{C}^{\rev}$ equipped with the braiding $\overline{\sigma}$ given by
\begin{equation*}
  (\overline{\sigma}_{V,W}: V \otimes^{\rev} W \to W \otimes^{\rev} V)
  := (\sigma_{V,W}^{-1}: W \otimes V \to V \otimes W)
\end{equation*}
for $V, W \in \mathcal{C}$. By using the universal property of the Deligne tensor product, we define the $k$-linear exact functor $G$ by
\begin{equation}
  \label{eq:factor-functor}
  G: \mathcal{C} \boxtimes \overline{\mathcal{C}} \to \mathcal{Z}(\mathcal{C}),
  \quad V \boxtimes W \mapsto (V \otimes W, c),
\end{equation}
where the natural isomorphism $c$ is given by
\begin{equation}
  \label{eq:factor-functor-def-c}
  c_X: V \otimes W \otimes X
  \xrightarrow{\ \id_V^{} \otimes \sigma_{X,W}^{-1} \ }
  V \otimes X \otimes W
  \xrightarrow{\ \sigma_{V,X}^{} \otimes \id_W^{} \ }
  X \otimes V \otimes W
\end{equation}
for $X \in \mathcal{C}$. The functor $G$ is in fact a braided tensor functor with the monoidal structure given by the braiding of $\mathcal{Z}(\mathcal{C})$.

\begin{definition}[Etingof, Nikshych and Ostrik {\cite{MR2097289}}]
  The braided finite tensor category $\mathcal{C}$ is {\em factorizable} if the functor \eqref{eq:factor-functor} is an equivalence.
\end{definition}

Now we can state the main result of this section as follows:

\begin{theorem}
  \label{thm:factor-nondege}
  A braided finite tensor category is factorizable if and only if it is non-degenerate.
\end{theorem}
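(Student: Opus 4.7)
The plan is to show that the factorization functor $G$ factors, up to known equivalences, through a canonical functor determined by the Hopf pairing $\omega = \omega_{\mathcal{C}}$. Since $\omega$ is a Hopf pairing, the morphism $\mathbb{F} \to \mathbb{F}^*$ appearing in \eqref{eq:omega-sharp} is a morphism of Hopf algebras in $\mathcal{C}$. Using it, every $\mathbb{F}$-comodule $(M, \rho)$ acquires an $\mathbb{F}$-action $(\id_M \otimes \omega) \circ (\rho \otimes \id_{\mathbb{F}})$, producing a $k$-linear exact functor
\begin{equation*}
  \omega^{\natural}: \mathcal{C}^{\mathbb{F}} \longrightarrow \mathcal{C}_{\mathbb{F}}.
\end{equation*}
A standard argument, using rigidity of $\mathcal{C}$ to identify $\mathbb{F}^*$-comodules with $\mathbb{F}$-modules and the fact that a morphism of Hopf algebras in a rigid category induces an equivalence on (co)module categories if and only if it is invertible, shows that $\omega^{\natural}$ is an equivalence if and only if \eqref{eq:omega-sharp} is an isomorphism, i.e., if and only if $\mathcal{C}$ is non-degenerate.

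Next I would invoke the two lemmas to be established earlier in this section (the intended Lemmas \ref{lem:F-comodules} and \ref{lem:F-modules}) to obtain $k$-linear equivalences
\begin{equation*}
  E_1: \mathcal{C} \boxtimes \overline{\mathcal{C}} \xrightarrow{\ \approx\ } \mathcal{C}^{\mathbb{F}},
  \qquad
  E_2: \mathcal{C}_{\mathbb{F}} \xrightarrow{\ \approx\ } \mathcal{Z}(\mathcal{C}).
\end{equation*}
The heart of the proof is then to verify that the composite $E_2 \circ \omega^{\natural} \circ E_1$ is isomorphic, as a $k$-linear functor, to the factorization functor $G$ of \eqref{eq:factor-functor}. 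Granting this identification, the theorem is immediate: $G$ is an equivalence precisely when $\omega^{\natural}$ is, and this is precisely the non-degeneracy of $\omega$.

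For the key verification I would evaluate both composites on an object of the form $V \boxtimes W$. The object $E_1(V \boxtimes W)$ is $V \otimes W$ equipped with an $\mathbb{F}$-coaction built from the universal dinatural transformation $i_{\mathcal{C}}(\unitobj; -)$; after the twist by $\omega^{\natural}$, the defining formula \eqref{eq:F-def-omega} for $\omega$ shows that the resulting $\mathbb{F}$-action has a universal description involving the double braiding $\sigma_{Y^*, W}\, \sigma_{W, Y^*}$. Applying $E_2$, which by the recipe \eqref{eq:center-to-Z-mod} converts an $\mathbb{F}$-action on $V \otimes W$ into a half-braiding by contracting with $\eval$ and $\coev$, one factor of this double braiding is absorbed into the duality datum while the other rearranges into $(\sigma_{V, X} \otimes \id_W) \circ (\id_V \otimes \sigma^{-1}_{X, W})$, which is exactly the half-braiding $c$ defined in \eqref{eq:factor-functor-def-c}.

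The main obstacle is bookkeeping: one has to carefully unwind three universal constructions---the identification $\mathcal{C} \boxtimes \overline{\mathcal{C}} \simeq \mathcal{C}^{\mathbb{F}}$, the identification $\mathcal{C}_{\mathbb{F}} \simeq \mathcal{Z}(\mathcal{C})$ coming from Section~\ref{subsec:Hopf-monad-Z}, and the twist by $\omega^{\natural}$---and keep track of how the numerous occurrences of $\sigma$ and $\sigma^{-1}$ in the definitions of $\omega$, of the central Hopf monad $Z_{\mathcal{C}}$, and of $G$ combine. Once the two functors are shown to agree on objects of the form $V \boxtimes W$, naturality in $V$, $W$, and in morphisms follows automatically from the universal property of $\mathbb{F}$ as a coend, so the remainder of the proof reduces to routine checks.
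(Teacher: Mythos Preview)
Your proposal is correct and matches the paper's approach almost exactly: the paper defines $\omega^{\natural}$ just as you do, proves via a restriction-of-scalars lemma that it is an equivalence precisely when $\omega$ is non-degenerate, and then verifies by a direct graphical computation on objects of the form $V \boxtimes W$ that the square with $G$, $\omega^{\natural}$, and the two equivalences of Lemmas~\ref{lem:F-comodules} and~\ref{lem:F-modules} commutes. The only cosmetic difference is that the paper compares the two $\mathbb{F}$-actions on $V \otimes W$ directly (rather than translating to half-braidings first), which amounts to the same bookkeeping you describe.
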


\subsection{Restriction-of-scalars functor}
\label{subsec:restriction}

To prove Theorem~\ref{thm:factor-nondege}, we first prepare a technical lemma for functors induced by algebra morphisms. Let $\mathcal{C}$ be a finite tensor category which is not necessarily braided. Given an algebra $A$ in $\mathcal{C}$, we denote by $\mathcal{C}_A$ the category of right $A$-modules in $\mathcal{C}$. A morphism $\phi: A \to B$ of algebras in $\mathcal{C}$ induces a functor
\begin{equation*}
  \mathrm{Res}_{\phi}: \mathcal{C}_B \to \mathcal{C}_A,
  \quad (M, \mu)
  \mapsto (M, \mu \circ (\id_M \otimes \phi)),
\end{equation*}
which we call the {\em restriction of scalars} along $\phi$. We remark:

\begin{lemma}
  \label{lem:res-equiv}
  $\mathrm{Res}_{\phi}$ is an equivalence if and only if $\phi$ is an isomorphism.
\end{lemma}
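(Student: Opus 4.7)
The direction $(\Leftarrow)$ is immediate: if $\phi$ is an algebra isomorphism, then $\phi^{-1}: B \to A$ is again an algebra morphism, and $\mathrm{Res}_\phi$ and $\mathrm{Res}_{\phi^{-1}}$ are strictly mutually inverse. For the converse, my plan is to recover $\phi$ (up to a canonical identification $B \cong F(A)$) as the $A$-component of the unit of a natural adjunction attached to $\mathrm{Res}_\phi$, and then read off invertibility.

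The functor $\mathrm{Res}_\phi: \mathcal{C}_B \to \mathcal{C}_A$ is $k$-linear and exact, since it leaves underlying objects and morphisms unchanged and only alters the right action. Since $\mathcal{C}_A$ and $\mathcal{C}_B$ are finite abelian, the Eilenberg-Watts theorem recalled in \S\ref{sec:preliminaries} then provides a left adjoint $F: \mathcal{C}_A \to \mathcal{C}_B$. Assuming $\mathrm{Res}_\phi$ is an equivalence, $F$ is necessarily a quasi-inverse and the unit $\eta: \id_{\mathcal{C}_A} \to \mathrm{Res}_\phi \circ F$ is a natural isomorphism. It therefore suffices to identify $\eta_A$ with $\phi$.

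For this identification I would use the chain of natural isomorphisms
\begin{equation*}
\Hom_{\mathcal{C}_B}(F(A), M)
\cong \Hom_{\mathcal{C}_A}(A, \mathrm{Res}_\phi(M))
\cong \mathcal{C}(\unitobj, M)
\cong \Hom_{\mathcal{C}_B}(B, M),
\end{equation*}
natural in $M \in \mathcal{C}_B$, where the first is the adjunction, the second is the standard bijection $f \mapsto f \circ u_A$ for the regular module $A \in \mathcal{C}_A$, and the third sends $g: \unitobj \to M$ to the action map $\mu_M \circ (g \otimes \id_B)$. By the Yoneda lemma this chain is induced by a canonical isomorphism $j: B \to F(A)$ in $\mathcal{C}_B$. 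Chasing $\id_{F(A)}$ through the chain and applying the right-unit axiom for $F(A)$ yields the identity $j \circ u_B = \eta_A \circ u_A$, so that $j^{-1} \circ \eta_A: A \to B$ is the unique right $A$-module map (with $A$-action on $B$ given by $\mu_B \circ (\id_B \otimes \phi)$) whose precomposition with $u_A$ equals $u_B$. By the defining property of $\phi$ as a unit-preserving algebra morphism, this forces $j^{-1} \circ \eta_A = \phi$. Since $\eta_A$ and $j$ are isomorphisms, so is $\phi$.

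The main obstacle is the Yoneda-style diagram chase identifying $\eta_A$ with $\phi$: once the four bijections above are set up with consistent handedness the computation is short, but one must be careful about the direction of $j$ produced by Yoneda and about which evaluation at $u_A$ versus $u_B$ is used in each step. Everything else is formal.
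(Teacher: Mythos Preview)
Your argument is correct and follows essentially the same strategy as the paper: recover $\phi$ as the unit $\eta_A$ of the adjunction $F \dashv \mathrm{Res}_\phi$ evaluated at $A$, modulo a canonical identification $F(A)\cong B$, and conclude invertibility from the fact that the unit of an adjoint equivalence is a natural isomorphism. The only cosmetic difference is that the paper names the left adjoint explicitly as $(-)\otimes_A B$ and realizes $\mathrm{Res}_\phi$ as $\iHom_B(B,-)$ to write down the identification $A \to RL(A)\cong B$ directly, whereas you obtain $F$ abstractly via Eilenberg--Watts and pin down $F(A)\cong B$ by Yoneda; both routes yield the same map and the same conclusion.
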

\begin{proof}
  This lemma is proved by generalizing the standard argument in the ordinary ring theory to the $\mathcal{C}$-enriched setting (see, {\it e.g.}, \cite{MR0450361,MR0498792,MR0498793}): Let $\iHom_B$ be the internal Hom functor for the left $\mathcal{C}$-module category $\mathcal{C}_B$. If we view $B$ as an $A$-$B$-bimodule by $\phi$, then $\mathrm{Res}_{\phi}$ is isomorphic to
  \begin{equation*}
    R = \iHom_B(B, -): \mathcal{C}_B \to \mathcal{C}_A.
  \end{equation*}
  The functor $L = (-) \otimes_A B: \mathcal{C}_A \to \mathcal{C}_B$ is left adjoint to $R$. Let $\eta$ be the unit of the adjunction $L \dashv R$. Then one can check that the composition
  \begin{equation*}
    A \xrightarrow{\quad \eta_A \quad}
    R L (A)
    \cong \iHom_B(B, A \otimes_A B)
    \cong \iHom_B(B, B) \cong B
  \end{equation*}
  coincides with $\phi$. Thus, if $\mathrm{Res}_{\phi}$ is an equivalence, then $\eta$ is an isomorphism, and hence $\phi$ is an isomorphism. The converse is clear.
\end{proof}

Let $F$ be a coalgebra in $\mathcal{C}$. Then $A = {}^* \! F$ is an algebra in $\mathcal{C}$ as the image of the algebra $F \in \mathcal{C}^{\op}$ under the tensor functor ${}^* (-): \mathcal{C}^{\op} \to \mathcal{C}^{\rev}$. If $(M, \delta)$ is a right\footnote{Since $\mathcal{C}$ is not assumed to be braided, the algebra $A$ acts on a right $F$-comodule from the right unlike the case of ordinary coalgebras over a field.} $F$-comodule in $\mathcal{C}$, then the object $M$ is a right $A$-module by
\begin{equation*}
  M \otimes A
  \xrightarrow{\quad \delta \otimes \id \quad}
  M \otimes F \otimes A
  \xrightarrow{\quad \id \otimes \eval \quad} M.
\end{equation*}
We identify the category $\mathcal{C}^F$ of right $F$-comodules with the category $\mathcal{C}_A$ of right $A$-modules by this correspondence.

Now we suppose that $\mathcal{C}$ is braided. As we have seen, the coend $\mathbb{F}$ is a Hopf algebra in $\mathcal{C}$ endowed with a canonical Hopf pairing $\omega = \omega_{\mathcal{C}}$. By~\eqref{eq:F-Hopf-pairing-1}--\eqref{eq:F-Hopf-pairing-3}, the algebra $\mathbb{F}$ acts on every right $\mathbb{F}$-comodule $(M, \delta)$ by
\begin{equation}
  \label{eq:omega-nat-action}
  a_{\delta}: M \otimes \mathbb{F}
  \xrightarrow{\quad \delta \otimes \id \quad}
  M \otimes \mathbb{F} \otimes \mathbb{F}
  \xrightarrow{\quad \id \otimes \omega \quad}
  M.
\end{equation}
Hence we get a functor
\begin{equation}
  \label{eq:omega-nat}
  \omega^{\natural}: \mathcal{C}^{\mathbb{F}} \to \mathcal{C}_{\mathbb{F}},
  \quad (M, \delta) \mapsto (M, a_{\delta}).
\end{equation}
If we identify $\mathcal{C}^{\mathbb{F}}$ with the category of right ${}^* \mathbb{F}$-modules, then the functor $\omega^{\natural}$ corresponds to the restriction of scalars along
 \eqref{eq:omega-flat}. Thus, by Lemma~\ref{lem:res-equiv}, we have:

\begin{lemma}
  $\omega^{\natural}$ is an equivalence if and only if $\mathcal{C}$ is non-degenerate.
\end{lemma}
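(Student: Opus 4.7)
The plan is to reduce the statement to Lemma~\ref{lem:res-equiv} by checking that, under the identification $\mathcal{C}^{\mathbb{F}} \cong \mathcal{C}_{{}^*\mathbb{F}}$ explained just above the lemma, the functor $\omega^{\natural}$ coincides with the restriction-of-scalars functor $\mathrm{Res}_{\omega^{\flat}}$ along the morphism $\omega^{\flat}:\mathbb{F}\to{}^*\mathbb{F}$ of \eqref{eq:omega-flat}. Non-degeneracy of $\mathcal{C}$ is exactly the statement that $\omega^{\flat}$ is an isomorphism in $\mathcal{C}$, so once this identification is in place Lemma~\ref{lem:res-equiv} yields both directions simultaneously.

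First I would verify that $\omega^{\flat}$ is in fact a morphism of algebras from $\mathbb{F}$ to $A={}^*\mathbb{F}$, the latter being endowed with the algebra structure dual to the coalgebra $\mathbb{F}$. Multiplicativity is the transpose, via the right-duality adjunction, of the Hopf-pairing axiom \eqref{eq:F-Hopf-pairing-2}, and unitality is the transpose of the second half of \eqref{eq:F-Hopf-pairing-3}; both reduce to zigzag manipulations with $\coev$ and $\eval$. With $\omega^{\flat}$ an algebra map, $\mathrm{Res}_{\omega^{\flat}}:\mathcal{C}_{{}^*\mathbb{F}}\to\mathcal{C}_{\mathbb{F}}$ is a well-defined functor. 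The identity
\begin{equation*}
  \eval_{\mathbb{F}}\circ(\id_{\mathbb{F}}\otimes\omega^{\flat})=\omega
\end{equation*}
is then the content of the right-duality transpose: substituting the definition \eqref{eq:omega-flat} of $\omega^{\flat}$ and applying one zigzag identity collapses the left-hand side to $\omega$.

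The required identification $\omega^{\natural}=\mathrm{Res}_{\omega^{\flat}}$ is now a one-line diagram chase. For $(M,\delta)\in\mathcal{C}^{\mathbb{F}}$, the corresponding object of $\mathcal{C}_{{}^*\mathbb{F}}$ has action $\alpha=(\id_M\otimes\eval_{\mathbb{F}})\circ(\delta\otimes\id_{{}^*\mathbb{F}})$, and the restriction along $\omega^{\flat}$ produces the $\mathbb{F}$-action
\begin{equation*}
  \alpha\circ(\id_M\otimes\omega^{\flat})
  =(\id_M\otimes(\eval_{\mathbb{F}}\circ(\id_{\mathbb{F}}\otimes\omega^{\flat})))\circ(\delta\otimes\id_{\mathbb{F}})
  =(\id_M\otimes\omega)\circ(\delta\otimes\id_{\mathbb{F}})=a_{\delta},
\end{equation*}
which is precisely the action defining $\omega^{\natural}(M,\delta)$ in \eqref{eq:omega-nat-action}. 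Since both functors act as the identity on morphisms, the equality $\omega^{\natural}=\mathrm{Res}_{\omega^{\flat}}$ follows, and Lemma~\ref{lem:res-equiv} finishes the proof. The only real obstacle is bookkeeping: one must be consistent about which duality (left vs.\ right) is being used when identifying $\mathcal{C}^{\mathbb{F}}$ with $\mathcal{C}_{{}^*\mathbb{F}}$ and when turning a Hopf pairing axiom into an algebra-morphism axiom for $\omega^{\flat}$; once the conventions of the preliminaries are respected, nothing deeper than the zigzag identities is needed.
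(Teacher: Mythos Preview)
Your proposal is correct and follows exactly the paper's approach: the paper's entire argument is the single sentence preceding the lemma, namely that under the identification $\mathcal{C}^{\mathbb{F}}\cong\mathcal{C}_{{}^*\mathbb{F}}$ the functor $\omega^{\natural}$ is restriction of scalars along \eqref{eq:omega-flat}, whence Lemma~\ref{lem:res-equiv} applies. You have simply written out the details (that $\omega^{\flat}$ is an algebra map and the explicit verification that the two functors agree) that the paper leaves implicit.
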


The proof of Theorem~\ref{thm:factor-nondege} is outlined as follows: Below, we see that there are equivalences $\mathcal{C}^{\mathbb{F}} \approx \mathcal{C} \boxtimes \mathcal{C}$ and $\mathcal{C}_{\mathbb{F}} \approx \mathcal{Z}(\mathcal{C})$ of categories (Lemmas~\ref{lem:F-comodules} and~\ref{lem:F-modules}). A key observation for the proof is that the composition
\begin{equation*}
  \mathcal{C} \boxtimes \mathcal{C}
  \xrightarrow{\quad \approx \quad}
  \mathcal{C}^{\mathbb{F}}
  \xrightarrow{\quad \omega^{\natural} \quad}
  \mathcal{C}_{\mathbb{F}}
  \xrightarrow{\quad \approx \quad}
  \mathcal{Z}(\mathcal{C})
\end{equation*}
is isomorphic to the functor $G$ defined by~\eqref{eq:factor-functor}. Once this fact is recognized, it is obvious that $G$ is an equivalence if and only if $\omega^{\natural}$ is, and thus if and only if $\mathcal{C}$ is non-degenerate by the above lemma.

\subsection{Representation theory of the Hopf algebra $\mathbb{F}$}
\label{subsec:rep-th-of-F}

Let $\mathcal{C}$ be a braided finite tensor category, and let $\mathbb{F} \in \mathcal{C}$ be the Hopf algebra defined in \S\ref{subsec:non-dege}. We now give a description of the category of (co)modules over $\mathbb{F}$. For this purpose, it is convenient to use the graphical technique to express morphisms in $\mathcal{C}$. In our convention, the source and the target of a morphism are placed at the top and the bottom of the picture, respectively. If $X$ and $Y$ are objects of a braided finite tensor category with braiding $\sigma$, then the evaluation $\eval_X$, the coevaluation $\coev_X$, the braiding $\sigma_{X,Y}$, its inverse are expressed as in Figure~\ref{fig:gra-cal}. Some morphisms related to the Hopf algebra $\mathbb{F}$ are also expressed by special diagrams as in the figure.

\begin{figure}
  \input{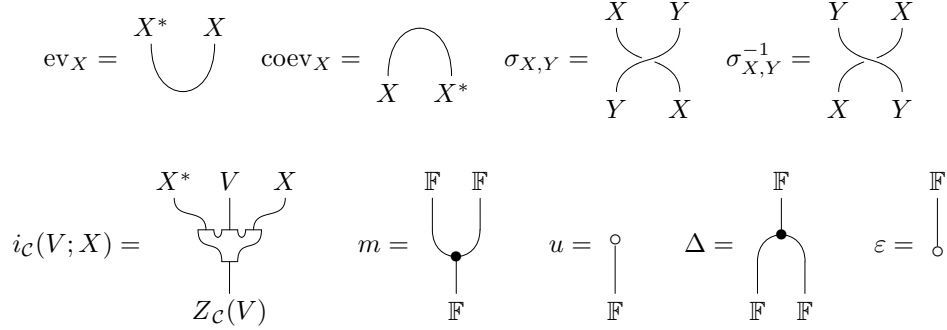}
  \caption{Graphical convention}
  \label{fig:gra-cal}
\end{figure}

For an object $V \in \mathcal{C}$, we define a morphism $\rho_V$ by
\begin{equation}
  \label{eq:can-coact-def}
  \rho_V: V
  \xrightarrow{\quad \coev \otimes \id \quad}
  V \otimes V^* \otimes V
  \xrightarrow{\quad \id \otimes i(\unitobj; V) \quad}
  V \otimes \mathbb{F}.
\end{equation}
By the definition of the comultiplication of $\mathbb{F}$, it is easy to see that the object $X$ is a right $\mathbb{F}$-comodule with coaction $\rho_V$. Thus we call $\rho_V$ the {\em canonical coaction} of $\mathbb{F}$. We now give the following description of the category of $\mathbb{F}$-comodules:

\begin{lemma}
  \label{lem:F-comodules}
  The following functor is an equivalence:
  \begin{equation*}
    \mathcal{C} \boxtimes \mathcal{C} \to \mathcal{C}^{\mathbb{F}},
    \quad V \boxtimes W
    \mapsto (V \otimes W, \id_V \otimes \rho_W).
  \end{equation*}
\end{lemma}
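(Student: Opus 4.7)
First, I would verify that the bifunctor $\mathcal{C} \times \mathcal{C} \to \mathcal{C}^{\mathbb{F}}$, $(V, W) \mapsto (V \otimes W, \id_V \otimes \rho_W)$, is $k$-bilinear and right exact in each variable: $V \otimes (-)$ and $(-) \otimes W$ are exact on the rigid category $\mathcal{C}$, and the forgetful $U\colon \mathcal{C}^{\mathbb{F}} \to \mathcal{C}$ reflects exact sequences by Lemma~\ref{lem:finiteness-lemma}. The universal property of the Deligne tensor product then produces a well-defined $k$-linear exact functor $F\colon \mathcal{C} \boxtimes \mathcal{C} \to \mathcal{C}^{\mathbb{F}}$ implementing the assignment in the statement. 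A short computation using the dinaturality of $i_{\mathcal{C}}(\unitobj;-)$ and of $\coev$ shows that the canonical coaction $\rho_X$ is natural in $X\in \mathcal{C}$, which will be used repeatedly below.

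To show $F$ is an equivalence, the plan is to construct an explicit quasi-inverse through the Eilenberg--Watts equivalence $\overline{\Phi}_{\mathcal{C}, \mathcal{C}}\colon \LEX(\mathcal{C}, \mathcal{C}) \xrightarrow{\sim} \mathcal{C}^{\op} \boxtimes \mathcal{C}$ of~\eqref{eq:def-Phi-inverse}, combined with the underlying abelian-category equivalence $\mathcal{C}^{\op} \simeq \mathcal{C}$ afforded by left duality. To each $(M, \delta) \in \mathcal{C}^{\mathbb{F}}$ I would associate the ``cotensor'' functor $L_{M, \delta}\colon \mathcal{C} \to \mathcal{C}$ defined as the equalizer
\begin{equation*}
  L_{M, \delta}(X) \;=\; \mathrm{eq}\!\Bigl( M \otimes X \;\rightrightarrows\; M \otimes X \otimes \mathbb{F} \Bigr),
\end{equation*}
where one arrow is $\id_M \otimes \rho_X$ and the other is $(\id_M \otimes \sigma_{\mathbb{F}, X}) \circ (\delta \otimes \id_X)$, combining $\delta$ with $\rho_X$ via the braiding. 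This is $k$-linear and left exact in $X$ as a finite limit of left exact functors, and I would set $G(M, \delta) := \int^X X^* \boxtimes L_{M, \delta}(X) \in \mathcal{C} \boxtimes \mathcal{C}$, the coend existing by Lemma~\ref{lem:coend-exist}.

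To verify $G \circ F \cong \id$, I would unwind $L_{F(V \boxtimes W)}(X)$ using~\eqref{eq:can-coact-def} and the universal property of $\mathbb{F} = \int^Y Y^* \otimes Y$; Fubini for coends (\cite[IX.8]{MR1712872}) together with Yoneda-type manipulations then collapse the resulting double coend to $V \boxtimes W$. For $F \circ G \cong \id$, I would exploit the canonical equalizer presentation $M \to M \otimes \mathbb{F} \rightrightarrows M \otimes \mathbb{F} \otimes \mathbb{F}$ (dual of the bar resolution) of every comodule together with exactness of $F$ and $G$ to reduce to the case of cofree comodules $(V\otimes\mathbb{F}, V\otimes\Delta)$, where both compositions agree by direct computation. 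The main obstacle, I anticipate, is the careful bookkeeping of left/right duality and braiding in the definition of $L_{M,\delta}$ and the accompanying dinaturality check, after which the argument becomes a routine exercise in coend calculus.
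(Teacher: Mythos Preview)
The paper does not prove this lemma at all: it only cites Lyubashenko's theory of squared coalgebras \cite[\S2.7]{MR1625495} and the author's own module-category argument \cite[Lemma~3.5]{2016arXiv160805905S}. Your attempt at a direct proof via a cotensor construction and the Eilenberg--Watts equivalence is therefore a genuinely different route.

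However, the specific quasi-inverse you wrote down is wrong. Take $\mathcal{C}=\Rep(G)$ for a finite abelian group $G$ with its symmetric braiding. Then $\mathbb{F}\cong k\widehat{G}$ as a coalgebra (trivial $G$-action, grouplike basis $\{e_\chi\}_{\chi\in\widehat{G}}$), the canonical coaction on $k_\chi$ is $x\mapsto x\otimes e_\chi$, and $F(k_\phi\boxtimes k_\psi)=(k_{\phi\psi},\text{degree }\psi)$. Your equalizer $L_{F(k_\phi\boxtimes k_\psi)}(k_\chi)$ is $k_{\phi\psi}\otimes k_\chi$ when $\chi=\psi$ and $0$ otherwise, so
\[
G\bigl(F(k_\phi\boxtimes k_\psi)\bigr)=\int^{X}X^*\boxtimes L(X)\;\cong\;k_{\psi^{-1}}\boxtimes k_{\phi\psi^2}\;\not\cong\;k_\phi\boxtimes k_\psi
\]
in general. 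Structurally, the problem is visible before any computation: for $M=V\otimes W$ your equalizer factors as $L(X)=V\otimes E(W,X)$ with $V$ sitting in the \emph{second} $\mathcal{C}$-slot of the coend $\int^X X^*\boxtimes L(X)$, so there is no way for it to reappear in the first slot.

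The fix is to replace $M\otimes X$ by the internal Hom: set
\[
L_{M,\delta}(X)=\iHom_{\mathcal{C}^{\mathbb{F}}}\bigl((X,\rho_X),(M,\delta)\bigr),
\]
the internal Hom for the left $\mathcal{C}$-module category $\mathcal{C}^{\mathbb{F}}$ (concretely, an equalizer built on $M\otimes X^*$ rather than $M\otimes X$), and take $G(M,\delta)=\int^{X}L_{M,\delta}(X)\boxtimes X$. In the test case this gives $k_\phi\boxtimes k_\psi$ as desired, and the verification of $G\circ F\cong\id$ becomes the adjunction $\Hom_{\mathcal{C}^{\mathbb{F}}}(V\cdot(W,\rho_W),-)\cong\Hom_{\mathcal{C}}(V,\iHom_{\mathcal{C}^{\mathbb{F}}}((W,\rho_W),-))$ combined with co-Yoneda. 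This is essentially the module-category proof the paper cites; your cotensor idea can be made to work, but not with the parallel pair you wrote.
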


This is a special case of Lyubashenko's result \cite[\S2.7]{MR1625495}. The original proof uses the notion of squared coalgebras. There is also a proof based on the theory of module categories over finite tensor categories \cite[Lemma 3.5]{2016arXiv160805905S}.

The category of $\mathbb{F}$-modules is described as follows:

\begin{lemma}
  \label{lem:F-modules}
  There is an isomorphism $\mathcal{C}_{\mathbb{F}} \cong \mathcal{Z}(\mathcal{C})$ of categories.
\end{lemma}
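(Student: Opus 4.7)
The plan is to construct an isomorphism of monads $\alpha: Z_{\mathcal{C}} \xrightarrow{\sim} T$ on $\mathcal{C}$, where $T$ is the monad with underlying endofunctor $T(V) = V \otimes \mathbb{F}$, unit $\id_V \otimes u$, and multiplication $\id_V \otimes m$ induced by the algebra structure on $\mathbb{F}$. Since \S\ref{subsec:Hopf-monad-Z} already identifies $\mathcal{Z}(\mathcal{C})$ with the category of $Z_{\mathcal{C}}$-modules, and $\mathcal{C}_{\mathbb{F}}$ is by definition the category of $T$-modules, any such monad isomorphism $\alpha$ transports module structures along its inverse and produces the desired isomorphism $\mathcal{C}_{\mathbb{F}} \cong \mathcal{Z}(\mathcal{C})$ of categories.

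To define $\alpha_V: Z_{\mathcal{C}}(V) \to V \otimes \mathbb{F}$, use the braiding of $\mathcal{C}$ to transport the universal dinatural family: take $\alpha_V$ to be the unique morphism such that
\begin{equation*}
  \alpha_V \circ i_{\mathcal{C}}(V; X) = (\id_V \otimes i_{\mathcal{C}}(\unitobj; X)) \circ (\sigma_{V, X^*}^{-1} \otimes \id_X)
\end{equation*}
for all $X \in \mathcal{C}$. Dinaturality of the right-hand side in $X$ follows by combining the naturality of $\sigma^{-1}$ with the dinaturality of $i_{\mathcal{C}}(\unitobj; -)$, and naturality in $V$ is analogous. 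An inverse is defined by the same formula with $\sigma_{V, X^*}$ in place of $\sigma_{V, X^*}^{-1}$; the two maps are mutually inverse on the defining dinatural families. The unit axiom $\alpha_V \circ \eta_V = \id_V \otimes u$ is immediate upon setting $X = \unitobj$, since $\sigma_{V, \unitobj}^{-1} = \id_V$. For the multiplication axiom, by the universal property of $Z_{\mathcal{C}}^2(V)$ encoded in \eqref{eq:Hopf-monad-Z-def-mu}, it suffices to verify after precomposition with $i_{\mathcal{C}}(Z_{\mathcal{C}}(V); Y) \circ (\id_{Y^*} \otimes i_{\mathcal{C}}(V; X) \otimes \id_Y)$; substituting the definitions of $\alpha$ and of $m$ from \eqref{eq:F-def-mult} then reduces the problem to a braid identity on five strands.

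The main obstacle is this braid identity, which is nontrivial because it mixes $\sigma$ (appearing in the formula \eqref{eq:F-def-mult} for $m$) with $\sigma^{-1}$ (appearing in $\alpha$). Expanding every compound braiding via the hexagon axiom, one side has six elementary crossings while the other has only two. After applying far commutativity to rearrange the order of application, two pairs of the form $\sigma \sigma^{-1}$ become adjacent and cancel; the remaining crossings then agree with the other side via a single application of the braid relation $s_1^{-1} s_2^{-1} s_1^{-1} = s_2^{-1} s_1^{-1} s_2^{-1}$. Since both the hexagon axiom and the braid relation are built into any braided category, the compatibility holds, completing the construction of the monad isomorphism.
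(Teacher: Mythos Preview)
Your approach is correct and essentially the same as the paper's: the paper defines the isomorphism in the opposite direction, $\xi_V: V \otimes \mathbb{F} \to Z_{\mathcal{C}}(V)$, via $\xi_V \circ (\id_V \otimes i_{\mathcal{C}}(\unitobj;X)) = i_{\mathcal{C}}(V;X) \circ (\sigma_{V,X^*} \otimes \id_X)$, so your $\alpha$ is precisely $\xi^{-1}$. Because the paper's $\xi$ uses only the positive braiding $\sigma$, its multiplication check stays entirely within positive crossings and needs no cancellations; your choice of $\sigma^{-1}$ forces the mixed identity you describe. One small correction to your crossing bookkeeping: after expanding, far commutativity makes only \emph{one} pair $s_3 s_3^{-1}$ adjacent and cancel, leaving $s_2 s_1^{-1} s_2^{-1} s_1^{-1}$ versus $s_1^{-1} s_2^{-1}$; the braid relation $s_1^{-1} s_2^{-1} s_1^{-1} = s_2^{-1} s_1^{-1} s_2^{-1}$ then produces the second cancellation $s_2 s_2^{-1}$. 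With that sequencing fixed, your argument goes through.
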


This is a special case of Majid's result \cite[Theorem 3.2]{MR1192156}. We give a different (but essentially same) proof by enlightening the role of the central Hopf monad.

\begin{proof}
  For $V \in \mathcal{C}$, we define $\xi_V: V \otimes \mathbb{F} \to Z_{\mathcal{C}}(V)$ by
  \begin{equation}
    \label{eq:monad-iso-1}
    \xi_V \circ (\id_V \otimes i_{\mathcal{C}}(\unitobj; X)) = i_{\mathcal{C}}(V; X) \circ (\sigma_{V,X^*} \otimes \id_X)
  \end{equation}
  for all $X \in \mathcal{C}$. Then we have
  \begin{equation}
    \label{eq:monad-iso-2}
    \eta_V = \xi_V \circ (\id_V \otimes u)
    \quad \text{and} \quad
    \mu_V \circ \xi_{Z_{\mathcal{C}}(V)} \circ (\xi_{V} \otimes \mathbb{F})
    = \xi_V \circ (\id_V \otimes m)
  \end{equation}
  for all $V \in \mathcal{C}$. Indeed, it is easy to check the first equation. The second one can be verified as in Figure~\ref{fig:proof-F-mod-Z}. Equation~\eqref{eq:monad-iso-2} means that $\xi$ is in fact an isomorphism of monads. Thus their categories of modules are also isomorphic.
\end{proof}

Let $(V, c)$ be an object of $\mathcal{Z}(\mathcal{C})$. By the construction of the category isomorphism $\mathcal{Z}(\mathcal{C}) \cong \mathcal{C}_{\mathbb{F}}$ in the above lemma, the right $\mathbb{F}$-module corresponding to $(V, c)$ is the object $V$ with the action
\begin{equation*}
  \triangleleft_V: V \otimes \mathbb{F}
  \xrightarrow{\quad \xi_V \quad}
  Z_{\mathcal{C}}(V)
  \xrightarrow{\quad a \quad} V,
\end{equation*}
where $a$ is the action of $Z_{\mathcal{C}}$ on $V$ defined by~\eqref{eq:center-to-Z-mod}. Thus,
\begin{equation}
  \label{eq:Z-mod-to-F-mod}
  \begin{aligned}
    \triangleleft_V \circ (\id_V \otimes i_{\mathcal{C}}(\unitobj; X))
    & = (\eval_X \otimes \id_V) \circ (\id_{X^*} \otimes c_X) \circ (\sigma_{V,X^*} \otimes \id_X).
  \end{aligned}
\end{equation}

\begin{figure}
  \input{fig-S3-2}
  \caption{Proof of Equation~\eqref{eq:monad-iso-2}}
  \label{fig:proof-F-mod-Z}

  \bigskip
  \input{fig-S3-3}
  \caption{Proof of $\triangleleft_1 = \triangleleft_2$}
  \label{fig:F-actions}
\end{figure}

\subsection{Proof of Theorem~\ref{thm:factor-nondege}}

We consider the following diagram:
\begin{equation*}
  \xymatrix{
    \mathcal{C} \boxtimes \overline{\mathcal{C}}
    \ar[rrr]^{\approx}_{\text{\rm Lemma~\ref{lem:F-comodules}}}
    \ar[d]_{G}
    & & & \mathcal{C}^{\mathbb{F}}
    \ar[d]^{\omega^{\natural}} \\
    \mathcal{Z}(\mathcal{C})
    \ar[rrr]^{\approx}_{\text{\rm Lemma~\ref{lem:F-modules}}}
    & & & \mathcal{C}_{\mathbb{F}}
  }
\end{equation*}
Let $F_1: \mathcal{C} \boxtimes \overline{\mathcal{C}} \to \mathcal{C}^{\mathbb{F}}$ and $F_2: \mathcal{Z}(\mathcal{C}) \to \mathcal{C}^{\mathbb{F}}$ be the equivalences given in Lemmas~\ref{lem:F-comodules} and \ref{lem:F-modules}, respectively, and set $E_1 = \omega^{\natural} F_1$ and $E_2 = F_2 G$. For all objects $V, W \in \mathcal{C}$, the underlying object of the $\mathbb{F}$-module $E_i(V \boxtimes W)$ is $V \otimes W$. Let $\triangleleft_i$ ($i = 1, 2$) denote the action of $\mathbb{F}$ on $E_i(V \boxtimes W)$. Figure~\ref{fig:F-actions} shows $\triangleleft_1 = \triangleleft_2$. Thus we have
\begin{equation*}
  E_1(V \boxtimes W) = E_2(V \boxtimes W).
\end{equation*}
This implies that $E_1 \cong E_2$ as functors from $\mathcal{C} \boxtimes \overline{\mathcal{C}}$ to $\mathcal{C}_{\mathbb{F}}$, that is, the diagram in concern commutes up to isomorphisms. By the argument at the last of \S\ref{subsec:restriction}, we conclude that $\mathcal{C}$ is factorizable if and only if $\mathcal{C}$ is non-degenerate. The proof is done.

\section{Factorizability and the M\"uger center}
\label{sec:factor-vs-tricen}

\subsection{The M\"uger center}

Let $\mathcal{C}$ be a braided finite tensor category with braiding $\sigma$, and let $\mathcal{D}$ be a full subcategory of $\mathcal{C}$. The {\em M\"uger centralizer} of $\mathcal{D}$ in $\mathcal{C}$ is the full subcategory of $\mathcal{C}$ consisting of all objects $V \in \mathcal{C}$ such that $\sigma_{V,X} \circ \sigma_{X,V} = \id_{X \otimes V}$ for all $X \in \mathcal{D}$. The M\"uger centralizer of $\mathcal{D}$ in $\mathcal{C}$ is denoted by $\Mueg_{\mathcal{C}}(\mathcal{D})$, or simply by $\mathcal{D}'$ if the ambient category $\mathcal{C}$ is clear from the context.

\begin{definition}
  We call $\mathcal{C}' = \Mueg_{\mathcal{C}}(\mathcal{C})$ the {\em M\"uger center} of $\mathcal{C}$. We say that {\em the M\"uger center of $\mathcal{C}$ is trivial} if every object of $\mathcal{C}'$ is isomorphic to the direct sum of finitely many copies of the unit object $\unitobj \in \mathcal{C}$, or, equivalently, $\mathcal{C}' \approx \mathrm{Vec}$.
\end{definition}

It has been known that a ribbon fusion category is a modular tensor category if and only if its M\"uger center is trivial \cite{MR1741269,MR1966525,MR3242743}. The main purpose of this section is to generalize this fact to the non-semisimple case as follows:

\begin{theorem}
  \label{thm:factor-tricen}
  A braided finite tensor category is factorizable if and only if its M\"uger center is trivial.
\end{theorem}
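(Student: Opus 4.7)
The plan is to apply Lemma~\ref{lem:FPdim-cap} to two natural copies of $\mathcal{C}$ sitting inside $\mathcal{Z}(\mathcal{C})$. Let $L := G(-\boxtimes \unitobj): \mathcal{C} \to \mathcal{Z}(\mathcal{C})$ and $R := G(\unitobj \boxtimes -): \overline{\mathcal{C}} \to \mathcal{Z}(\mathcal{C})$ be the restrictions of $G$ to the two factors of the Deligne product; explicitly $L(V) = (V, \sigma_{V,-})$ and $R(V) = (V, \sigma_{-,V}^{-1})$. Both are fully faithful braided tensor embeddings, so set $\mathcal{A} := L(\mathcal{C})$ and $\mathcal{B} := R(\overline{\mathcal{C}})$ as tensor full subcategories of $\mathcal{Z}(\mathcal{C})$, each of Frobenius--Perron dimension $\FPdim(\mathcal{C})$. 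An object $(V, c) \in \mathcal{Z}(\mathcal{C})$ lies in $\mathcal{A} \cap \mathcal{B}$ precisely when the two half-braidings agree, i.e.\ $\sigma_{V,X} = \sigma_{X,V}^{-1}$ for every $X \in \mathcal{C}$, which is exactly the condition that $V \in \mathcal{C}'$; so $\mathcal{A} \cap \mathcal{B} \approx \mathcal{C}'$. On the other hand, since $G(V \boxtimes W) \cong L(V) \otimes R(W)$ in $\mathcal{Z}(\mathcal{C})$, the essential image of $G$ is contained in, and tensor-generates, $\mathcal{A} \vee \mathcal{B}$.

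Feeding these data into Lemma~\ref{lem:FPdim-cap} gives
\begin{equation*}
  \FPdim(\mathcal{C}') \cdot \FPdim(\mathcal{A} \vee \mathcal{B}) \; = \; \FPdim(\mathcal{A}) \FPdim(\mathcal{B}) \; = \; \FPdim(\mathcal{C})^{2}.
\end{equation*}
Combining with the identity $\FPdim(\mathcal{Z}(\mathcal{C})) = \FPdim(\mathcal{C})^2$ (which in the non-semisimple setting follows from Lemma~\ref{lem:F-modules} together with~\eqref{eq:FPdim-ind} applied to the forgetful functor $\mathcal{Z}(\mathcal{C}) \to \mathcal{C}$, yielding $\FPdim(\mathbb{F}) = \FPdim(\mathcal{C})$, and then $\FPdim(\mathcal{C}_{\mathbb{F}}) = \FPdim(\mathbb{F}) \FPdim(\mathcal{C})$), one sees that $\mathcal{A} \vee \mathcal{B} = \mathcal{Z}(\mathcal{C})$ iff $\FPdim(\mathcal{A} \vee \mathcal{B}) = \FPdim(\mathcal{Z}(\mathcal{C}))$ iff $\FPdim(\mathcal{C}') = 1$ iff $\mathcal{C}' \approx \mathrm{Vec}$ (a finite tensor category of Frobenius--Perron dimension $1$ reduces to $\mathrm{Vec}$, since $\FPdim(V) \FPdim(P(V)) \geq 1$ for every nonzero simple $V$).

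It remains to connect $\mathcal{A} \vee \mathcal{B} = \mathcal{Z}(\mathcal{C})$ with factorizability. One direction is immediate: if $G$ is an equivalence then its essential image is already $\mathcal{Z}(\mathcal{C})$. Conversely, $\mathcal{A} \vee \mathcal{B} = \mathcal{Z}(\mathcal{C})$ means every object of $\mathcal{Z}(\mathcal{C})$ is a subobject of some $G(V \boxtimes W)$, i.e.\ $G$ is dominant; since $\FPdim(\mathcal{C} \boxtimes \overline{\mathcal{C}}) = \FPdim(\mathcal{C})^2 = \FPdim(\mathcal{Z}(\mathcal{C}))$, a standard argument (a dominant tensor functor between finite tensor categories of equal Frobenius--Perron dimension is automatically an equivalence) shows $G$ is an equivalence. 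Thus both conditions are equivalent to factorizability of $\mathcal{C}$.

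The main obstacle I anticipate is the careful identification of $\mathcal{A} \vee \mathcal{B}$ with the tensor full subcategory spanned by the essential image of $G$, so that the hypotheses of Lemma~\ref{lem:FPdim-cap} really apply to the correct subcategory; this hinges on the factorization $G(V \boxtimes W) \cong L(V) \otimes R(W)$ inside $\mathcal{Z}(\mathcal{C})$ and on knowing that $\mathcal{A} \vee \mathcal{B}$ is stable under subobjects. A secondary technical point is the verification of $\FPdim(\mathcal{Z}(\mathcal{C})) = \FPdim(\mathcal{C})^{2}$ in the non-semisimple setting via the central Hopf monad $Z_{\mathcal{C}}$ and the identification $\mathcal{Z}(\mathcal{C}) \approx \mathcal{C}_{\mathbb{F}}$ of Lemma~\ref{lem:F-modules}, but this should follow cleanly from two applications of~\eqref{eq:FPdim-ind}.
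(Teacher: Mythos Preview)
Your proof is correct and follows essentially the same route as the paper: identify $\mathcal{A}=\mathcal{C}_+$ and $\mathcal{B}=\mathcal{C}_-$ inside $\mathcal{Z}(\mathcal{C})$, apply Lemma~\ref{lem:FPdim-cap} to obtain $\FPdim(\mathcal{C}')\cdot\FPdim(\mathcal{A}\vee\mathcal{B})=\FPdim(\mathcal{C})^2$, and then use that $\mathcal{A}\vee\mathcal{B}=\mathrm{Im}(G)$ together with $\FPdim(\mathcal{Z}(\mathcal{C}))=\FPdim(\mathcal{C})^2$ and the standard dominance/FP-dimension criterion for equivalence. The only cosmetic difference is that the paper obtains $\FPdim(\mathcal{Z}(\mathcal{C}))=\FPdim(\mathcal{C})^2$ as the special case $\mathcal{S}=\mathcal{C}$ of Lemma~\ref{lem:FPdim-center-wrt-S} rather than via the identification $\mathcal{Z}(\mathcal{C})\approx\mathcal{C}_{\mathbb{F}}$ of Lemma~\ref{lem:F-modules}.
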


\subsection{Dimensions of tensor full subcategories}
\label{subsec:FPdim-subsec}

Theorem~\ref{thm:factor-tricen} will be proved by using a formula of the Frobenius-Perron dimension of the tensor full subcategory generated by two tensor full subcategories (Lemma~\ref{lem:FPdim-cap}). We first clarify what we mean by a tensor full subcategory:

\begin{definition}
  \label{def:ten-ful-subcat}
  A {\em topologizing subcategory} \cite[\S5.3]{MR1347919} of an abelian category $\mathcal{A}$ is a full subcategory of $\mathcal{A}$ closed under finite direct sums and subquotients. Let $\mathcal{C}$ be a finite tensor category. By a {\em tensor full subcategory} of $\mathcal{C}$, we mean a topologizing subcategory of $\mathcal{C}$ closed under the tensor product and the duality functors.
\end{definition}

We remark:

\begin{lemma}
  \label{lem:topologizing}
  Let $\mathcal{S}$ be a topologizing subcategory of a finite abelian category $\mathcal{A}$. Then $\mathcal{S}$ itself is a finite abelian category such that the inclusion functor $i: \mathcal{S} \hookrightarrow \mathcal{A}$ preserves and reflects exact sequences.
\end{lemma}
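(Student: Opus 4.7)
The plan is to verify each ingredient required for $\mathcal{S}$ to be a finite abelian category, most of which descend routinely from $\mathcal{A}$, and then to supply the one nontrivial ingredient: enough projectives in $\mathcal{S}$.

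First, the subquotient-closure gives the abelian structure of $\mathcal{S}$ essentially for free. For any morphism $f\colon X\to Y$ in $\mathcal{S}$, the kernel, image, and cokernel computed in $\mathcal{A}$ are subquotients of $X$ or $Y$ and hence lie in $\mathcal{S}$. Consequently $\mathcal{S}$ is abelian, the inclusion $i$ is exact, and it reflects exact sequences (both exactness conditions refer to the very same morphisms in $\mathcal{A}$). The $k$-linearity and finite-dimensionality of Hom-spaces hold because $\mathcal{S}$ is a full subcategory of $\mathcal{A}$, every object of $\mathcal{S}$ has finite length since all its composition factors in $\mathcal{A}$ lie in $\mathcal{S}$, and every simple object of $\mathcal{S}$ is simple in $\mathcal{A}$ by subquotient-closure. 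In particular the isomorphism classes of simples of $\mathcal{S}$ form a finite list $V_1,\dots,V_n$.

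The substantive step is the construction of a projective generator of $\mathcal{S}$. For each $V_i$, let $P_i$ be its projective cover in $\mathcal{A}$. The collection of subobjects $K\subset P_i$ for which $P_i/K\in\mathcal{S}$ is nonempty (it contains $\mathrm{rad}\,P_i$, since $P_i/\mathrm{rad}\,P_i=V_i\in\mathcal{S}$), and the Artinian subobject lattice of $P_i$ shows it has a smallest element $K_i$, realized as the intersection of finitely many such $K$. Then $P_i^{\mathcal{S}}:=P_i/K_i$ embeds into the finite direct sum of the corresponding $P_i/K$, hence lies in $\mathcal{S}$. I claim $P_i^{\mathcal{S}}$ is projective in $\mathcal{S}$: given a surjection $f\colon X\twoheadrightarrow Y$ and a morphism $g\colon P_i^{\mathcal{S}}\to Y$ in $\mathcal{S}$, lift $g\pi\colon P_i\to Y$ (where $\pi\colon P_i\twoheadrightarrow P_i^{\mathcal{S}}$) to $\tilde h\colon P_i\to X$ using projectivity of $P_i$ in $\mathcal{A}$; the image of $\tilde h$ is a subobject of $X\in\mathcal{S}$, so $P_i/\ker\tilde h\in\mathcal{S}$, and the minimality of $K_i$ forces $K_i\subset\ker\tilde h$, so $\tilde h$ factors through $\pi$ to yield the desired lift.

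Setting $Q=\bigoplus_{i=1}^n P_i^{\mathcal{S}}$, a Nakayama-style argument in the finite-length setting shows every $X\in\mathcal{S}$ is a quotient of some $Q^m$: lift a surjection $Q^m\twoheadrightarrow X/\mathrm{rad}(X)$ to a morphism $Q^m\to X$ using the projectivity just verified, and note that any morphism from a projective which hits the top of $X$ is automatically surjective (otherwise the nonzero quotient of $X$ by the image would have a simple quotient whose kernel contained both the image and $\mathrm{rad}(X)$). Hence $Q$ is a projective generator and $\Hom_{\mathcal{S}}(Q,-)\colon\mathcal{S}\to\Rep(\End_{\mathcal{S}}(Q)^{\op})$ realizes $\mathcal{S}$ as modules over the finite-dimensional algebra $\End_{\mathcal{S}}(Q)^{\op}$. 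The main obstacle is the construction of $P_i^{\mathcal{S}}$ and the verification of its projectivity in $\mathcal{S}$; everything else is either standard inheritance from $\mathcal{A}$ or the classical recognition of $\Rep(B)$ via a projective generator.
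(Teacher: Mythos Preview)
Your proof is correct, but it takes a different route from the paper's. You work concretely: for each simple $V_i\in\mathcal{S}$ you build its projective cover in $\mathcal{S}$ as the largest quotient of $P_i$ lying in $\mathcal{S}$, then verify projectivity directly via a lifting argument. The paper instead packages the situation categorically: it observes that $i$ has a right adjoint $t$ (sending $M$ to its largest subobject in $\mathcal{S}$), and dually a left adjoint; the idempotent comonad $T=i\circ t$ then identifies $\mathcal{S}$ with $\mathcal{A}^T$, and finiteness is imported wholesale from the earlier Lemma~\ref{lem:finiteness-lemma} on comodule categories over left exact comonads. Your $P_i^{\mathcal{S}}$ is exactly the left adjoint applied to $P_i$, so the two arguments are secretly dual cousins, but the paper's version avoids the hands-on lifting and Nakayama steps by appealing to the general machinery already in place. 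Your approach is more elementary and self-contained; the paper's is shorter given its prior investment in Lemma~\ref{lem:finiteness-lemma} and reusable for other comonadic situations.
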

\begin{proof}
  Let $M \in \mathcal{A}$ be an object. If $X$ and $Y$ are subobjects of $M$ belonging to $\mathcal{S}$, then their sum $X + Y \subset M$ also belongs to $\mathcal{S}$ as a quotient of $X \oplus Y \in \mathcal{S}$. From this observation, we see that $M$ has the largest subobject, say $t(M) \subset M$, belonging to $\mathcal{S}$. Since $\mathcal{S}$ is closed under quotient objects, the assignment $M \mapsto t(M)$ extends to a $k$-linear functor $t: \mathcal{A} \to \mathcal{S}$. Moreover, we have
  \begin{equation*}
    \Hom_{\mathcal{A}}(V, M) = \Hom_{\mathcal{S}}(V, t(M))
  \end{equation*}
  for all $V \in \mathcal{S}$ and $M \in \mathcal{A}$. Namely, the functor $t$ is a right adjoint of $i$.

  Applying the same argument to $i^{\op}: \mathcal{S}^{\op} \to \mathcal{A}^{\op}$, we see that $i^{\op}$ has a right adjoint (the condition that $\mathcal{S}$ is closed under subobjects is equivalent to that $\mathcal{S}^{\op}$ is closed under quotient objects). Hence, $i$ also has a left adjoint.

  Now we consider the $k$-linear idempotent comonad $T := i \circ t$ on $\mathcal{A}$ associated to the adjunction $i \dashv t$. Since $i$ and $t$ has left adjoints, so does $T$. The full subcategory $\mathcal{S}$ can be identified with the category $\mathcal{A}^T$ of $T$-comodules and, under this identification, the forgetful functor $\mathcal{A}^T \to \mathcal{A}$ corresponds to the inclusion functor $\mathcal{S} \hookrightarrow \mathcal{A}$. The claim of this lemma now follows from Lemma~\ref{lem:finiteness-lemma}.
\end{proof}

Now let $\mathcal{C}$ be a finite tensor category. For a topologizing subcategory $\mathcal{S}$ of $\mathcal{C}$, we denote by $t_{\mathcal{S}}: \mathcal{C} \to \mathcal{S}$ the functor defined by taking the largest subobject belonging to $\mathcal{S}$. By the proof of the above lemma, $t_{\mathcal{S}}$ is $k$-linear and left exact. We often regard the functor $t_{\mathcal{S}}$ as a $k$-linear left exact endofunctor on $\mathcal{C}$ by composing the inclusion functor. By Lemmas~\ref{lem:coend-adj} and~\ref{lem:coend-exist}, the coends
\begin{equation*}
  Z_{\mathcal{S}}(V) := \int^{X \in \mathcal{S}} \!\!\! X^* \otimes V \otimes X
  \quad \text{and} \quad \int^{X \in \mathcal{C}} \!\!\! X^* \otimes V \otimes t_{\mathcal{S}}(X)
\end{equation*}
exist for each $V \in \mathcal{C}$ and are canonically isomorphic. The assignment $V \mapsto Z_{\mathcal{S}}(V)$ gives rise to a $k$-linear endofunctor on $\mathcal{C}$.

We are interested in the case where $\mathcal{S}$ is a tensor full subcategory of $\mathcal{C}$. If this is the case, then one can endow $Z_{\mathcal{S}}$ with the structure of a Hopf monad in a similar way as the central Hopf monad. By the same argument as in \S\ref{subsec:Hopf-monad-Z}, we have:

\begin{lemma}
  \label{lem:Hopf-monad-ZS}
  The monoidal category of $Z_{\mathcal{S}}$-modules can be identified with the centralizer $\mathcal{Z}(\mathcal{S}; \mathcal{C})$ introduced in Subsection~\ref{subsec:dri-cen}.
\end{lemma}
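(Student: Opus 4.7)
The plan is to mirror the construction of the central Hopf monad carried out in Subsection~\ref{subsec:Hopf-monad-Z} together with the identification of its modules with $\mathcal{Z}(\mathcal{C})$, but with the parameter $X$ now restricted to the tensor full subcategory $\mathcal{S}$. Since $\mathcal{S}$ contains $\unitobj$ and is closed under $(-)^{*}$ and $\otimes$, every object appearing in the defining formulas \eqref{eq:Hopf-monad-Z-def-Z2}, \eqref{eq:Hopf-monad-Z-def-mu} and $\eta_V = i_{\mathcal{C}}(V;\unitobj)$ still lies in $\mathcal{S}$. Hence the universal property of $Z_{\mathcal{S}}(V)$, together with the Fubini theorem applied to coends indexed by $\mathcal{S} \times \mathcal{S}$ in the case of $\mu$, produces unique morphisms $Z_{\mathcal{S}}^{(0)}$, $Z_{\mathcal{S}}^{(2)}(V,W)$, $\mu_V$ and $\eta_V$. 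The comonoidal, monad and antipode axioms are verified by the same diagrammatic manipulations as in the case $\mathcal{S} = \mathcal{C}$ treated in \cite{MR2869176}, since every step uses only dinaturality of $i_{\mathcal{S}}$ and the relevant closure properties of $\mathcal{S}$.

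The substantive task is the identification of $Z_{\mathcal{S}}$-modules with objects of $\mathcal{Z}(\mathcal{S}; \mathcal{C})$. Given a $Z_{\mathcal{S}}$-module $(V, a)$, the family
\begin{equation*}
 (\eval_X \otimes \id_V) \circ (\id_{X^{*}} \otimes c_X) \;=\; a \circ i_{\mathcal{S}}(V;X) \qquad (X \in \mathcal{S})
\end{equation*}
reconstructs a unique natural transformation $c_X: V \otimes X \to X \otimes V$ upon precomposition with $\coev_X \otimes \id_V \otimes \id_X$. Conversely, any half-braiding $c$ of $V$ with $\mathcal{S}$ determines a dinatural family and hence, by the universal property, a morphism $a: Z_{\mathcal{S}}(V) \to V$ via \eqref{eq:center-to-Z-mod}. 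I would then check that associativity of the $Z_{\mathcal{S}}$-action translates, through \eqref{eq:Hopf-monad-Z-def-mu}, into the hexagon axiom $c_{X \otimes Y} = (\id_X \otimes c_Y)(c_X \otimes \id_Y)$, and that unitality matches $c_{\unitobj} = \id_V$. Invertibility of $c_X$ is automatic, coming from the antipode of $Z_{\mathcal{S}}$ just as for $Z_{\mathcal{C}}$. These checks are identical to those already used to identify $\mathcal{Z}(\mathcal{C})$ with $Z_{\mathcal{C}}$-modules, and they depend only on the fact that $X$, $Y$, $X \otimes Y$ and $X^{*}$ all live in~$\mathcal{S}$.

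The main obstacle, as for the central Hopf monad, is to upgrade this bijection to a monoidal isomorphism, namely to show that the tensor product of $Z_{\mathcal{S}}$-modules induced by the comonoidal structure $Z_{\mathcal{S}}^{(2)}$ coincides with the tensor product $(V,c) \otimes (W,d) = (V \otimes W,\,(c \otimes \id_W)(\id_V \otimes d))$ of $\mathcal{Z}(\mathcal{S};\mathcal{C})$. This reduces to a single diagram chase combining the definition \eqref{eq:Hopf-monad-Z-def-Z2} of $Z_{\mathcal{S}}^{(2)}$ with the reconstruction formula for $c$; the use of $\coev_X$ in \eqref{eq:Hopf-monad-Z-def-Z2} is precisely what makes the two tensor products agree. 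Once this step is verified, Lemma~\ref{lem:Hopf-monad-ZS} follows, and one obtains the analogue of the identification $\mathcal{Z}(\mathcal{C}) \cong \mathcal{C}^{Z_{\mathcal{C}}}$ with $\mathcal{S}$ in place of $\mathcal{C}$.
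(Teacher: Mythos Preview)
Your proposal is correct and is precisely the ``same argument as in \S\ref{subsec:Hopf-monad-Z}'' that the paper invokes; the paper does not give an explicit proof of this lemma but simply refers back to the construction of $Z_{\mathcal{C}}$ and its identification with $\mathcal{Z}(\mathcal{C})$, relying on the closure of $\mathcal{S}$ under $\otimes$, $\unitobj$ and duality exactly as you note. You have in fact written out considerably more detail than the paper provides.
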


An object $(V, c) \in \mathcal{Z}(\mathcal{C})$ becomes an object of $\mathcal{Z}(\mathcal{S}; \mathcal{C})$ by restricting the natural isomorphism $c = \{ c_X \}_{X \in \mathcal{C}}$ to $X \in \mathcal{S}$. The forgetful functor from $\mathcal{Z}(\mathcal{C})$ to  $\mathcal{Z}(\mathcal{S}; \mathcal{C})$ is defined in this manner. We remark:

\begin{lemma}
  \label{lem:FPdim-center-wrt-S}
  $\mathcal{Z}(\mathcal{S}; \mathcal{C})$ is a finite tensor category such that
  \begin{equation}
    \label{eq:FPdim-center-wrt-S}
    \FPdim(\mathcal{Z}(\mathcal{S}; \mathcal{C}))
    = \FPdim(\mathcal{S}) \FPdim(\mathcal{C}),
  \end{equation}
  and the forgetful functor $U: \mathcal{Z}(\mathcal{C}) \to \mathcal{Z}(\mathcal{S}; \mathcal{C})$ is dominant.
\end{lemma}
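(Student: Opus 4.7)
Via Lemma~\ref{lem:Hopf-monad-ZS}, identify $\mathcal{Z}(\mathcal{S};\mathcal{C})$ with $\mathcal{C}_{Z_\mathcal{S}}$. Since $Z_\mathcal{S}$ is a $k$-linear right-exact Hopf monad on the finite tensor category~$\mathcal{C}$, a monad analogue of Lemma~\ref{lem:finiteness-lemma} together with the standard theory of Hopf monads on finite tensor categories \cite{MR2869176,MR2793022} shows that $\mathcal{C}_{Z_\mathcal{S}}$ is a finite tensor category: rigidity comes from the antipode of $Z_\mathcal{S}$, the tensor product is $k$-bilinear from $\mathcal{C}$, and the unit $(\unitobj,Z_\mathcal{S}^{(0)})$ is simple because its underlying object in~$\mathcal{C}$ is.

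To compute $\FPdim(\mathcal{Z}(\mathcal{S};\mathcal{C}))$, observe that the classical forgetful $\tilde U:\mathcal{Z}(\mathcal{C})\to\mathcal{C}$ is dominant and factors as $\tilde U=U_0\circ U$; since dominance of a composition of tensor functors forces dominance of its second factor, $U_0:\mathcal{Z}(\mathcal{S};\mathcal{C})\to\mathcal{C}$ is dominant. Its left adjoint is the free-module functor $L_0=Z_\mathcal{S}$, so the left-adjoint form of \eqref{eq:FPdim-ind} at $\unitobj$ gives
\[
\FPdim\bigl(Z_\mathcal{S}(\unitobj)\bigr)\;=\;\FPdim(\mathcal{Z}(\mathcal{S};\mathcal{C}))\,/\,\FPdim(\mathcal{C}).
\]
Viewed inside the finite tensor category $\mathcal{S}$ (Lemma~\ref{lem:topologizing}), the object $Z_\mathcal{S}(\unitobj)=\int^{X\in\mathcal{S}}X^*\otimes X$ is the Lyubashenko coend of $\mathcal{S}$; by the standard decomposition $\bigoplus_i V_i^*\otimes P_i$ over simples $V_i$ of $\mathcal{S}$ with projective covers $P_i$, its FPdim equals $\FPdim(\mathcal{S})$. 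Rearranging yields $\FPdim(\mathcal{Z}(\mathcal{S};\mathcal{C}))=\FPdim(\mathcal{S})\FPdim(\mathcal{C})$.

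For the dominance of $U$, let $\mathcal{I}\subseteq\mathcal{Z}(\mathcal{S};\mathcal{C})$ be the smallest tensor full subcategory containing the essential image of~$U$ and let $j:\mathcal{I}\hookrightarrow\mathcal{Z}(\mathcal{S};\mathcal{C})$ be the inclusion. Then $U$ factors through $\mathcal{I}$ via a dominant $U':\mathcal{Z}(\mathcal{C})\to\mathcal{I}$, so it suffices to show $\FPdim(\mathcal{I})=\FPdim(\mathcal{Z}(\mathcal{S};\mathcal{C}))$. Since $\tilde U=U_0\circ j\circ U'$ is dominant, so is $U_0\circ j$; with left adjoint $j^L\circ Z_\mathcal{S}$, \eqref{eq:FPdim-ind} yields $\FPdim(j^L(Z_\mathcal{S}(\unitobj)))=\FPdim(\mathcal{I})/\FPdim(\mathcal{C})$. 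If $L_0(\unitobj)=(Z_\mathcal{S}(\unitobj),\mu)$ already lies in $\mathcal{I}$, then $j^L$ fixes it and we obtain $\FPdim(\mathcal{I})=\FPdim(\mathcal{S})\FPdim(\mathcal{C})$, forcing $\mathcal{I}=\mathcal{Z}(\mathcal{S};\mathcal{C})$. The main obstacle is therefore to verify that the canonical morphism $\iota_\unitobj:\mathbb{F}_\mathcal{S}:=Z_\mathcal{S}(\unitobj)\to Z_\mathcal{C}(\unitobj)=:\mathbb{F}$ of Hopf algebras in $\mathcal{C}$, induced by the inclusion $\mathcal{S}\hookrightarrow\mathcal{C}$, is a monomorphism: it is automatically a $Z_\mathcal{S}$-module map by the monad-morphism axioms, and $U(\tilde L(\unitobj))=(\mathbb{F},\cdot)\in\mathcal{I}$, so injectivity embeds $L_0(\unitobj)$ as a subobject of an object of~$\mathcal{I}$. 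This coend analogue of the faithful flatness of Hopf subalgebras is in the spirit of the integral theory of Lyubashenko coends developed in~\cite{2015arXiv150401178S}.
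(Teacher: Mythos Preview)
Your approach is genuinely different from the paper's. The paper realises $\mathcal{Z}(\mathcal{S};\mathcal{C})$ as the dual of $\mathcal{S}\boxtimes\mathcal{C}^{\rev}$ acting on the exact module category $\mathcal{C}$; finiteness and the dimension formula then drop out of Morita invariance of $\FPdim$, and dominance of $U$ is the statement that the dual of the fully faithful inclusion $\mathcal{S}\boxtimes\mathcal{C}^{\rev}\hookrightarrow\mathcal{C}\boxtimes\mathcal{C}^{\rev}$ is surjective \cite[Theorem~7.17.4]{MR3242743}. Your route via the Hopf monad $Z_{\mathcal{S}}$ and the monomorphism $\mathbb{F}_{\mathcal{S}}\hookrightarrow\mathbb{F}$ from \cite{2015arXiv150401178S} is an interesting alternative that avoids module-category machinery, but it has a real gap.

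The problem is your computation of $\FPdim(Z_{\mathcal{S}}(\unitobj))$. The ``standard decomposition'' $\mathbb{F}_{\mathcal{S}}\cong\bigoplus_i V_i^*\otimes P_i$ that you invoke is \emph{not} valid in a non-semisimple finite tensor category: under the Eilenberg--Watts equivalence $\overline{\Phi}$ of \eqref{eq:def-Phi-inverse}, the object $\int^{X\in\mathcal{S}}X\boxtimes X$ corresponds to the identity functor, whereas $\bigoplus_i P_i\boxtimes V_i$ corresponds to the functor $M\mapsto\bigoplus_i\Hom(P_i,M)\cdot V_i$, which only sees composition factors and is not isomorphic to $\id_{\mathcal{S}}$ unless $\mathcal{S}$ is semisimple. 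In the paper the equality $\FPdim(\mathbb{F}_{\mathcal{S}})=\FPdim(\mathcal{S})$ is Lemma~\ref{lem:FPdim-coend}, and its proof \emph{uses} the present Lemma~\ref{lem:FPdim-center-wrt-S}; so quoting it here would be circular, and your direct justification does not stand on its own. (A further, smaller point you glide over: one must check that the coend $\int^{X\in\mathcal{S}}X^*\otimes X$ computed in $\mathcal{C}$ agrees with the one computed in $\mathcal{S}$, which follows from exactness of the inclusion together with a finite-colimit presentation of the coend, but deserves a sentence.)

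Your dominance argument for $U$ is essentially sound once the dimension formula is in hand: the monomorphism $\mathbb{F}_{\mathcal{S}}\hookrightarrow\mathbb{F}$ from \cite{2015arXiv150401178S} is indeed a map of $Z_{\mathcal{S}}$-modules, placing $L_0(\unitobj)$ inside $\mathrm{Im}(U)$, and the $\FPdim$ bookkeeping then closes. But compared with the paper's one-line appeal to duality of surjectivity and injectivity for tensor functors, this is considerably more work for the same conclusion.
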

\begin{proof}
  We use the theory of exact module categories and the dual tensor category; see \cite[Chapter 7]{MR3242743}. We consider the finite tensor categories $\mathcal{D} := \mathcal{S} \boxtimes \mathcal{C}^{\rev}$ and $\mathcal{E} := \mathcal{C} \boxtimes \mathcal{C}^{\rev}$. The category $\mathcal{C}$ is an exact $\mathcal{E}$-module category by
  \begin{equation*}
    (X \boxtimes Y) \triangleright V = X \otimes V \otimes Y
    \quad (X, Y, M \in \mathcal{C}).
  \end{equation*}
  Let $i: \mathcal{D} \to \mathcal{E}$ be the tensor functor induced by the inclusion functor $\mathcal{S} \hookrightarrow \mathcal{C}$.
  Then $\mathcal{D}$ acts on $\mathcal{C}$ through $i$. Since the base field is assumed to be algebraically closed, every indecomposable projective object of $\mathcal{D}$ is of the form $P \boxtimes Q$ for some indecomposable projective objects $P \in \mathcal{S}$ and $Q \in \mathcal{C}$. Although $P$ is not projective in $\mathcal{C}$ in general, the object $(P \boxtimes Q) \triangleright M = P \otimes M \otimes Q$ is projective in $\mathcal{C}$ for all $M \in \mathcal{C}$ since $Q$ is projective in $\mathcal{C}$. Thus $\mathcal{C}$ is an exact $\mathcal{D}$-module category.

  It is known that $\mathcal{Z}(\mathcal{C})$ is equivalent to the dual tensor category of $\mathcal{E}$ with respect to $\mathcal{C}$. In the same way, we see that $\mathcal{Z}(\mathcal{S}; \mathcal{C})$ is equivalent to the dual tensor category of $\mathcal{D}$ with respect to $\mathcal{C}$. This implies that $\mathcal{Z}(\mathcal{S}; \mathcal{C})$ is a finite tensor category. Now, by the Morita invariance of the Frobenius-Perron dimension, we have
  \begin{equation*}
    \FPdim \mathcal{Z}(\mathcal{S}; \mathcal{C})
    = \FPdim(\mathcal{D})
    = \FPdim(\mathcal{S}) \FPdim(\mathcal{C}).
  \end{equation*}
  The functor $U$ is dual to $(i, \mathcal{C})$ in the sense of \cite{MR3242743}. Since $i$ is fully faithful, $U$ is dominant by \cite[Theorem 7.17.4]{MR3242743}.
\end{proof}

We now compute the Frobenius-Perron dimensions of $\mathbb{F}_{\mathcal{S}} := Z_{\mathcal{S}}(\unitobj)$ and
\begin{equation*}
  \widetilde{\mathbb{F}}_{\mathcal{S}} := \int^{X \in \mathcal{S}} \!\!\! X \boxtimes X \in \mathcal{C}^{\op} \boxtimes \mathcal{C}.
\end{equation*}

\begin{lemma}
  \label{lem:FPdim-coend}
  $\FPdim(\widetilde{\mathbb{F}}_{\mathcal{S}}) = \FPdim(\mathbb{F}_{\mathcal{S}}) = \FPdim(\mathcal{S})$.
\end{lemma}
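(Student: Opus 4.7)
The plan is to compute each Frobenius--Perron dimension by a separate application of formula~\eqref{eq:FPdim-ind}, in each case expressing the coend in question as the image of the unit object under an adjunction between finite tensor categories.

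For $\FPdim(\mathbb{F}_{\mathcal{S}}) = \FPdim(\mathcal{S})$, I would use the Hopf monad formalism directly. By Lemma~\ref{lem:Hopf-monad-ZS}, the free--forget adjunction $L \dashv U$ for the Hopf monad $Z_{\mathcal{S}}$ takes the form $U: \mathcal{Z}(\mathcal{S}; \mathcal{C}) \to \mathcal{C}$ with $U \circ L = Z_{\mathcal{S}}$, so $U L(\unitobj) = \mathbb{F}_{\mathcal{S}}$. By Lemma~\ref{lem:FPdim-center-wrt-S}, $U$ is a dominant tensor functor between finite tensor categories with $\FPdim(\mathcal{Z}(\mathcal{S}; \mathcal{C})) = \FPdim(\mathcal{S}) \FPdim(\mathcal{C})$. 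Since any tensor functor between finite tensor categories preserves Frobenius--Perron dimensions of objects (by the uniqueness of the Perron--Frobenius character on the Grothendieck ring, see \cite[Chapter~6]{MR3242743}), applying~\eqref{eq:FPdim-ind} to $U$ with its left adjoint $L$ gives
\[
\FPdim(\mathbb{F}_{\mathcal{S}}) \;=\; \FPdim(U L(\unitobj)) \;=\; \FPdim(L(\unitobj)) \;=\; \FPdim(\mathcal{Z}(\mathcal{S}; \mathcal{C}))/\FPdim(\mathcal{C}) \;=\; \FPdim(\mathcal{S}).
\]

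For $\FPdim(\widetilde{\mathbb{F}}_{\mathcal{S}}) = \FPdim(\mathcal{S})$, I would first reduce to the case $\mathcal{C} = \mathcal{S}$. Since the inclusion $\mathcal{S} \hookrightarrow \mathcal{C}$ admits both a left and a right adjoint by the proof of Lemma~\ref{lem:topologizing}, the induced embedding $j: \mathcal{S}^{\op} \boxtimes \mathcal{S} \hookrightarrow \mathcal{C}^{\op} \boxtimes \mathcal{C}$ is a fully faithful tensor functor preserving all colimits; hence the coend may be computed in $\mathcal{S}^{\op} \boxtimes \mathcal{S}$ and $j$ preserves its FPdim. It therefore suffices to show, for every finite tensor category $\mathcal{T}$, that $\int^{X \in \mathcal{T}} X \boxtimes X \in \mathcal{T}^{\op} \boxtimes \mathcal{T}$ has FPdim equal to $\FPdim(\mathcal{T})$. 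Transporting along the tensor equivalence $(-)^* \boxtimes \id: \mathcal{T}^{\op} \boxtimes \mathcal{T} \xrightarrow{\approx} \mathcal{T}^{\rev} \boxtimes \mathcal{T}$ and swapping factors to obtain $\mathcal{E} := \mathcal{T} \boxtimes \mathcal{T}^{\rev}$, this reduces to computing $\FPdim(B)$ for $B := \int^{X \in \mathcal{T}} X^{*} \boxtimes X \in \mathcal{E}$. View $\mathcal{T}$ as an exact indecomposable left $\mathcal{E}$-module category via $(X \boxtimes Y) \triangleright M = X \otimes M \otimes Y$, the exactness being established as in the proof of Lemma~\ref{lem:FPdim-center-wrt-S}. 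The family of evaluations $\eval_X: X^* \otimes X \to \unitobj$ is dinatural in $X$, so by the universal property of the coend it induces a canonical morphism $B \to \underline{\End}_{\mathcal{E}}(\unitobj)$; verifying that this morphism is an isomorphism identifies $B$ with the internal End algebra of $\unitobj$. By the Morita-theoretic description of exact module categories \cite[\S7.10]{MR3242743}, the equivalence $\mathcal{T} \approx \mathcal{E}_B$ yields $\FPdim(B) = \FPdim(\mathcal{E})/\FPdim(\mathcal{T}) = \FPdim(\mathcal{T})$.

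The main obstacle is the identification $B \cong \underline{\End}_{\mathcal{E}}(\unitobj)$: the coend $B$ is characterized by its universal property for morphisms \emph{out of} $B$, whereas the internal End is characterized by morphisms \emph{into} it, so bridging these requires either a Yoneda-type computation inside the Deligne tensor product $\mathcal{E}$ or a direct verification that both objects corepresent the same functor $e \mapsto \Hom_{\mathcal{T}}(e \triangleright \unitobj, \unitobj)$ on $\mathcal{E}^{\op}$ via the dinatural family induced by $\{\eval_X\}$.
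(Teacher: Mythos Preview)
Your argument for $\FPdim(\mathbb{F}_{\mathcal{S}}) = \FPdim(\mathcal{S})$ is exactly the paper's. For the other equality, however, the paper takes a much shorter path that bypasses your acknowledged obstacle entirely: rather than computing $\FPdim(\widetilde{\mathbb{F}}_{\mathcal{S}})$ independently via module-category theory, it proves $\FPdim(\widetilde{\mathbb{F}}_{\mathcal{S}}) = \FPdim(\mathbb{F}_{\mathcal{S}})$ directly. Consider the $k$-linear exact functor $F: \mathcal{C}^{\op} \boxtimes \mathcal{C} \to \mathcal{C}$, $X \boxtimes Y \mapsto X^* \otimes Y$. On simple objects (all of the form $X \boxtimes Y$ since $k$ is algebraically closed) one has $\FPdim F(X \boxtimes Y) = \FPdim(X)\FPdim(Y) = \FPdim(X \boxtimes Y)$; since $F$ is exact and FPdim depends only on composition factors, $F$ preserves FPdim on every object. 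As $F(\widetilde{\mathbb{F}}_{\mathcal{S}}) \cong \mathbb{F}_{\mathcal{S}}$ (an exact functor commutes with the coend), the equality follows immediately. Your route through $\underline{\End}_{\mathcal{E}}(\unitobj)$ is correct in principle---the identification you flag is the standard description of the canonical algebra of the regular $\mathcal{E}$-module category, and the dimension formula you invoke does hold---but it imports the Morita theory of exact module categories where a two-line observation about a single exact functor suffices.
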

\begin{proof}
  We define $F: \mathcal{C}^{\op} \boxtimes \mathcal{C} \to \mathcal{C}$ by $F(X \boxtimes Y) = X^* \otimes Y$ ($X, Y \in \mathcal{C}$).
  By definition, $F$ is $k$-linear and exact. For all objects $X, Y \in \mathcal{C}$, we have
  \begin{equation*}
    \FPdim F(X \boxtimes Y)
    = \FPdim(X) \FPdim(Y)
    = \FPdim(X \boxtimes Y).
  \end{equation*}
  Every simple object of $\mathcal{C}^{\op} \boxtimes \mathcal{C}$ is of the form $X \boxtimes Y$ for some simple objects $X$ and $Y$ of $\mathcal{C}$. Since $F$ is exact, and since the Frobenius-Perron dimension of an object depends only on the composition factors of the object, we have
  \begin{equation*}
    \FPdim F(M) = \FPdim(M)
  \end{equation*}
  for all $M \in \mathcal{C}^{\op} \boxtimes \mathcal{C}$. The first equality is the case where $M = \widetilde{\mathbb{F}}_{\mathcal{S}}$.

  To prove the second equality, we consider the Hopf monad $Z_{\mathcal{S}}$ introduced in the above. If we identify $\mathcal{Z}(\mathcal{S}; \mathcal{C})$ with the category of $Z_{\mathcal{S}}$-modules by Lemma~\ref{lem:Hopf-monad-ZS}, then the free $Z_{\mathcal{S}}$-module functor
  \begin{equation*}
    L: \mathcal{C} \to \mathcal{Z}(\mathcal{S}; \mathcal{C}),
    \quad V \mapsto Z_{\mathcal{S}}(V)
  \end{equation*}
  is left adjoint to the forgetful functor $\mathcal{Z}(\mathcal{S}; \mathcal{C}) \to \mathcal{C}$. By~\eqref{eq:FPdim-ind}, \eqref{eq:FPdim-center-wrt-S} and the fact that tensor functors preserve Frobenius-Perron dimensions, we have
  \begin{equation*}
    \FPdim(\mathbb{F}_{\mathcal{S}})
    = \FPdim(L(\unitobj))
    = \frac{\FPdim(\mathcal{Z}(\mathcal{S}; \mathcal{C}))}{\FPdim(\mathcal{C})} \FPdim(\unitobj)
    = \FPdim(\mathcal{S}). \qedhere
  \end{equation*}
\end{proof}

For a $k$-linear functor $F: \mathcal{M} \to \mathcal{N}$ between finite abelian categories, we denote by $\mathrm{Im}(F)$ the full subcategory of $\mathcal{N}$ consisting of all subquotients of objects of the form $F(X)$ for some $X \in \mathcal{M}$. Let $\mathcal{A}$ and $\mathcal{B}$ be topologizing subcategories of $\mathcal{C}$, and let $T: \mathcal{A} \boxtimes \mathcal{B} \to \mathcal{C}$ be the functor induced by the tensor product of $\mathcal{C}$. We set $\mathcal{A} \vee \mathcal{B} = \mathrm{Im}(T)$. It is easy to see that $\mathcal{A} \cap \mathcal{B}$ and $\mathcal{A} \vee \mathcal{B}$ are topologizing full subcategories of $\mathcal{C}$ as well.

Now we suppose that $\mathcal{A}$ and $\mathcal{B}$ are tensor full subcategories of $\mathcal{C}$ and, moreover, the functor $T: \mathcal{A} \boxtimes \mathcal{B} \to \mathcal{C}$ has a structure of a tensor functor (this assumption is satisfied if, for example, $\mathcal{C}$ is braided). Then $\mathcal{A} \vee \mathcal{B}$ is also a tensor full subcategory of $\mathcal{C}$. We give the following formula for the Frobenius-Perron dimension of $\mathcal{A} \vee \mathcal{B}$ (see \cite[Lemma 8.21.6]{MR3242743} for the semisimple case):

\begin{lemma}
  \label{lem:FPdim-cap}
  Under the above assumptions, we have
  \begin{equation*}
    \FPdim(\mathcal{A} \vee \mathcal{B}) \FPdim(\mathcal{A} \cap \mathcal{B})
    = \FPdim(\mathcal{A}) \FPdim(\mathcal{B}).
  \end{equation*}
\end{lemma}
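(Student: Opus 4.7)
The plan is to apply the Frobenius-Perron dimension formula~\eqref{eq:FPdim-ind} to the tensor functor
\begin{equation*}
T : \mathcal{A} \boxtimes \mathcal{B} \longrightarrow \mathcal{A} \vee \mathcal{B}, \qquad X \boxtimes Y \longmapsto X \otimes Y.
\end{equation*}
This is a tensor functor by hypothesis, and it is dominant because, by the definition $\mathcal{A} \vee \mathcal{B} = \mathrm{Im}(T)$, every object of $\mathcal{A} \vee \mathcal{B}$ is a subquotient of some $T(X \boxtimes Y)$. Letting $I$ denote a right adjoint of $T$, formula~\eqref{eq:FPdim-ind} yields
\begin{equation*}
\FPdim I(\unitobj) \;=\; \frac{\FPdim(\mathcal{A}) \FPdim(\mathcal{B})}{\FPdim(\mathcal{A} \vee \mathcal{B})},
\end{equation*}
so the lemma is reduced to showing $\FPdim I(\unitobj) = \FPdim(\mathcal{A} \cap \mathcal{B})$.

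The heart of the argument is to identify $I(\unitobj)$ with the coend
\begin{equation*}
\mathbb{K} \;:=\; \int^{V \in \mathcal{A} \cap \mathcal{B}} V^{*} \boxtimes V \;\in\; \mathcal{A} \boxtimes \mathcal{B}.
\end{equation*}
I would do this through the Eilenberg--Watts equivalence $\Phi_{\mathcal{A}^{\op}, \mathcal{B}} : \mathcal{A} \boxtimes \mathcal{B} \to \LEX(\mathcal{A}^{\op}, \mathcal{B})$ of~\eqref{eq:def-Phi}, by computing the left-exact functors to which $I(\unitobj)$ and $\mathbb{K}$ correspond. For $I(\unitobj)$, Yoneda gives $\Phi(I(\unitobj))(X) = t_{\mathcal{B}}(X^{*})$ from the chain $\Hom_{\mathcal{A} \boxtimes \mathcal{B}}(X \boxtimes Y, I(\unitobj)) = \Hom_{\mathcal{C}}(X \otimes Y, \unitobj) = \Hom_{\mathcal{C}}(Y, X^{*}) = \Hom_{\mathcal{B}}(Y, t_{\mathcal{B}}(X^{*}))$, where $t_{\mathcal{B}}$ is the right adjoint to the inclusion $\mathcal{B} \hookrightarrow \mathcal{C}$ of Lemma~\ref{lem:topologizing}. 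For $\mathbb{K}$, the equivalence $\Phi$ preserves colimits, so
\begin{equation*}
\Phi(\mathbb{K})(X) \;=\; \int^{V \in \mathcal{A} \cap \mathcal{B}} \Hom_{\mathcal{A}}(X, V^{*}) \cdot V \;=\; \int^{V \in \mathcal{A} \cap \mathcal{B}} \Hom_{\mathcal{A} \cap \mathcal{B}}(V, t_{\mathcal{B}}(X^{*})) \cdot V \;=\; t_{\mathcal{B}}(X^{*}),
\end{equation*}
by rigidity of $\mathcal{A}$, the adjunction inclusion $\dashv t_{\mathcal{B}}$, and the co-Yoneda lemma in $\mathcal{A} \cap \mathcal{B}$. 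Hence $\mathbb{K} \cong I(\unitobj)$.

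With this identification in hand, the conclusion is immediate. The $k$-linear exact functor $F : \mathcal{A} \boxtimes \mathcal{B} \to \mathcal{C}$, $X \boxtimes Y \mapsto X \otimes Y$, preserves Frobenius-Perron dimensions by the argument in the proof of Lemma~\ref{lem:FPdim-coend}: it multiplies Frobenius-Perron dimensions on simple objects and is exact. Since exact functors preserve coends,
\begin{equation*}
F(\mathbb{K}) \;=\; \int^{V \in \mathcal{A} \cap \mathcal{B}} V^{*} \otimes V \;=\; \mathbb{F}_{\mathcal{A} \cap \mathcal{B}},
\end{equation*}
which has Frobenius-Perron dimension $\FPdim(\mathcal{A} \cap \mathcal{B})$ by Lemma~\ref{lem:FPdim-coend}. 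Combining all of the above yields $\FPdim I(\unitobj) = \FPdim(\mathcal{A} \cap \mathcal{B})$, and substituting back gives the stated identity.

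The main potential obstacle is organizing the identification $\mathbb{K} \cong I(\unitobj)$ cleanly: one must keep track of the opposite categories and of several adjunctions simultaneously (left duality, inclusion and its right adjoint $t_{\mathcal{B}}$, and the Eilenberg--Watts equivalence) without losing the thread. The underlying computation, however, is a clean interplay of Yoneda, co-Yoneda, and rigidity, so once the bookkeeping is arranged properly the argument falls out of the tools already developed in Section~\ref{sec:preliminaries} and~\S\ref{subsec:FPdim-subsec}.
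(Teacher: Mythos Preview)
Your proof is correct and follows essentially the same strategy as the paper: apply~\eqref{eq:FPdim-ind} to the tensor functor $\mathcal{A} \boxtimes \mathcal{B} \to \mathcal{A} \vee \mathcal{B}$, identify the right adjoint at $\unitobj$ with a coend over $\mathcal{A} \cap \mathcal{B}$ via the Eilenberg--Watts equivalence, and invoke Lemma~\ref{lem:FPdim-coend}. The paper differs only in bookkeeping---it uses the functor $X \boxtimes Y \mapsto X^* \otimes Y$ from $\mathcal{A}^{\op,\rev} \boxtimes \mathcal{B}$ and computes $\Hom_{\mathcal{L}}(E, \Phi(R(\unitobj)))$ for all $E$ rather than evaluating $\Phi(\mathbb{K})$ pointwise, which sidesteps the (easily handled) issue of whether the coend in $\LEX$ is computed pointwise.
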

\begin{proof}
  We consider the following tensor functor:
  \begin{equation*}
    F: \mathcal{A}^{\op,\rev} \boxtimes \mathcal{B} \to \mathcal{A} \vee \mathcal{B},
    \quad X \boxtimes Y \mapsto T(X^* \boxtimes Y) = X^* \otimes Y.
  \end{equation*}
  Since $F$ is exact, it has a right adjoint, say $R$. By \eqref{eq:FPdim-ind}, we have
  \begin{equation*}
    \FPdim(\mathcal{A} \vee \mathcal{B}) \FPdim(R(\unitobj))
    = \FPdim(\mathcal{A}^{\op,\rev} \boxtimes \mathcal{B})
    = \FPdim(\mathcal{A}) \FPdim(\mathcal{B}).
  \end{equation*}
  Thus, to prove this lemma, it suffices to show that
  \begin{equation}
    \label{eq:FPdim-cap-pf-1}
    \FPdim(R(\unitobj)) = \FPdim(\mathcal{A} \cap \mathcal{B}).
  \end{equation}
  Let $\Phi = \Phi_{\mathcal{A}, \mathcal{B}}$ and $\overline{\Phi} = \overline{\Phi}_{\mathcal{A}, \mathcal{B}}$ be the category equivalences given by~\eqref{eq:def-Phi} and \eqref{eq:def-Phi-inverse}, respectively. Set $\mathcal{L} = \LEX(\mathcal{A}, \mathcal{B})$. For every $E \in \mathcal{L}$, we compute:
  \begin{align*}
    \Hom_{\mathcal{L}}(E, \Phi(R(\unitobj)))
    & \cong \Hom_{\mathcal{A}^{\op} \boxtimes \mathcal{B}}(\overline{\Phi}(E), R(\unitobj)) \\
    & \cong \textstyle \int_{X \in \mathcal{A}} \Hom_{\mathcal{A}^{\op} \boxtimes \mathcal{B}}(X \boxtimes E(X), R(\unitobj)) \\
    & \cong \textstyle \int_{X \in \mathcal{A}} \Hom_{\mathcal{C}}(X^* \otimes E(X), \unitobj) \\
    & \cong \textstyle \int_{X \in \mathcal{A}} \Hom_{\mathcal{C}}(E(X), X) \\
    & \cong \textstyle \int_{X \in \mathcal{A}} \Hom_{\mathcal{B}}(E(X), t_{\mathcal{A} \cap \mathcal{B}} (X)) \\
    & \cong \Hom_{\mathcal{L}}(E, t_{\mathcal{A} \cap \mathcal{B}}),
  \end{align*}
  where $t_{\mathcal{A} \cap \mathcal{B}}: \mathcal{C} \to \mathcal{A} \cap \mathcal{B}$ is regarded as an object of $\mathcal{L}$ by composing the inclusion functors. By the Yoneda lemma and Lemma~\ref{lem:coend-adj}, we have
  \begin{equation*}
    R(\unitobj) \cong \overline{\Phi}(t_{\mathcal{A} \cap \mathcal{B}})
    \cong \int^{X \in \mathcal{A}} X \boxtimes t_{\mathcal{A} \cap \mathcal{B}} (X)
    \cong \int^{X \in \mathcal{A} \cap \mathcal{B}} X \boxtimes X
    = \widetilde{\mathbb{F}}_{\mathcal{A} \cap \mathcal{B}}.
  \end{equation*}
  Now \eqref{eq:FPdim-cap-pf-1} follows from Lemma~\ref{lem:FPdim-coend}. The proof is done.
\end{proof}

\subsection{Proof of Theorem~\ref{thm:factor-tricen}}

Let $\mathcal{C}$ be a braided finite tensor category with braiding $\sigma$. We prove the main theorem of this section which states that $\mathcal{C}$ is factorizable if and only if the M\"uger center of $\mathcal{C}$ is trivial.

We define tensor full subcategories $\mathcal{C}_+$ and $\mathcal{C}_-$ of $\mathcal{Z}(\mathcal{C})$ by $\mathcal{C}_{\pm} = \mathrm{Im}(T_{\pm})$, where $T_+$ and $T_-$ are braided tensor functors defined by
\begin{equation}
  \label{eq:C-embeds-ZC}
  T_+: \mathcal{C} \to \mathcal{Z}(\mathcal{C}), \quad V \mapsto (V, \sigma_{V,-})
  \quad \text{and} \quad
  T_-: \overline{\mathcal{C}} \to \mathcal{Z}(\mathcal{C}), \quad V \mapsto (V, \sigma_{-,V}^{-1}),
\end{equation}
respectively. The functors $T_+$ and $T_-$ are fully faithful, and therefore we have
\begin{equation*}
  \FPdim(\mathcal{C}_{\pm}) = \FPdim(\mathcal{C})
\end{equation*}
by \cite[Corollary 6.3.5]{MR3242743}. The tensor full subcategories $\mathcal{A} = \mathcal{C}_+$ and $\mathcal{B} = \mathcal{C}_-$ of $\mathcal{Z}(\mathcal{C})$ satisfy the assumptions of Lemma~\ref{lem:FPdim-cap}. Thus we have
\begin{equation}
  \label{eq:FPdim-Im-G}
  \FPdim(\mathcal{C}_+ \vee \mathcal{C}_-) \FPdim(\mathcal{C}_+ \cap \mathcal{C}_-)
  = \FPdim(\mathcal{C})^2
\end{equation}
by that lemma.

The full subcategory $\mathcal{C}_+ \vee \mathcal{C}_-$ is the image of the functor $G: \mathcal{C} \boxtimes \overline{\mathcal{C}} \to \mathcal{Z}(\mathcal{C})$ used to define the factorizability. By~\eqref{eq:FPdim-Im-G} and the basic properties of the Frobenius-Perron dimensions \cite[Section 6]{MR3242743}, we have the following logical equivalences:
\begin{align*}
  \text{$G$ is an equivalence}
  & \iff \FPdim(\mathcal{C}_+ \vee \mathcal{C}_-) = \FPdim \mathcal{Z}(\mathcal{C}) \\
  & \iff \FPdim(\mathcal{C}_+ \cap \mathcal{C}_-) = 1 \\
  & \iff \mathcal{C}_+ \cap \mathcal{C}_{-} \approx \mathrm{Vec}.
\end{align*}
By the definition of $\mathcal{C}_{+}$ and $\mathcal{C}_{-}$, the full subcategory $\mathcal{C}_+ \cap \mathcal{C}_-$ can be identified with the M\"uger center of $\mathcal{C}$. Thus $\mathcal{C}$ is factorizable ({\it i.e.}, $G$ is an equivalence) if and only if $\mathcal{C}' \approx \mathrm{Vec}$. The proof is done.

\subsection{Dimensions of the M\"uger centralizer}

Let $\mathcal{C}$ be a braided finite tensor category, and let $\mathcal{S}$ be a full subcategory. By the definition of a braiding, the full subcategory $\mathcal{S}'$ is in fact a tensor full subcategory of $\mathcal{C}$. Thus we can talk about the Frobenius-Perron dimension of $\mathcal{S}'$. We have used Lemma~\ref{lem:FPdim-cap} to prove the main theorem of this section. As an application of this lemma, we also give the following theorem:

\begin{theorem}
  \label{thm:dim-muger-cent}
  For every tensor full subcategory $\mathcal{D}$ of $\mathcal{C}$, we have
  \begin{gather}
    \label{eq:FPdim-centralizer-1}
    \FPdim(\mathcal{D}) \FPdim(\mathcal{D}') = \FPdim(\mathcal{C}) \FPdim(\mathcal{D} \cap \mathcal{C}'), \\
    \label{eq:FPdim-centralizer-2}
    \mathcal{D}'' = \mathcal{D} \vee \mathcal{C}'.
  \end{gather}
\end{theorem}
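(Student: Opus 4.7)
The plan is to apply Lemma~\ref{lem:FPdim-cap} twice: once inside the Drinfeld center $\mathcal{Z}(\mathcal{C})$ to establish \eqref{eq:FPdim-centralizer-1}, and once inside $\mathcal{C}$ itself to deduce \eqref{eq:FPdim-centralizer-2} from it.

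For \eqref{eq:FPdim-centralizer-1}, I work inside $\mathcal{Z}(\mathcal{C})$ with the tensor full subcategories $T_+(\mathcal{D})$ and $T_-(\mathcal{D}')$, using the fully faithful braided tensor functors $T_\pm$ from \eqref{eq:C-embeds-ZC}. Full-faithfulness gives $\FPdim(T_+(\mathcal{D})) = \FPdim(\mathcal{D})$ and $\FPdim(T_-(\mathcal{D}')) = \FPdim(\mathcal{D}')$ via \cite[Corollary 6.3.5]{MR3242743}. A direct inspection of half-braidings identifies the intersection: an object $(V, c) \in \mathcal{Z}(\mathcal{C})$ lies in both $T_+(\mathcal{D})$ and $T_-(\mathcal{D}')$ iff $V \in \mathcal{D} \cap \mathcal{D}'$ and the half-braiding $c$ coincides with both $\sigma_{V, -}$ and $\sigma_{-, V}^{-1}$, forcing $V \in \mathcal{C}'$; combined with $\mathcal{C}' \subseteq \mathcal{D}'$, this shows $T_+(\mathcal{D}) \cap T_-(\mathcal{D}') \cong \mathcal{D} \cap \mathcal{C}'$ has Frobenius-Perron dimension $\FPdim(\mathcal{D} \cap \mathcal{C}')$. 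Lemma~\ref{lem:FPdim-cap} then yields
\[
  \FPdim(T_+(\mathcal{D}) \vee T_-(\mathcal{D}')) \cdot \FPdim(\mathcal{D} \cap \mathcal{C}') = \FPdim(\mathcal{D}) \FPdim(\mathcal{D}'),
\]
so \eqref{eq:FPdim-centralizer-1} reduces to the identity $\FPdim(T_+(\mathcal{D}) \vee T_-(\mathcal{D}')) = \FPdim(\mathcal{C})$.

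This last identity is the main technical obstacle. The natural strategy is to exploit the fact that $T_+(V)$ and $T_-(W)$ centralize each other in $\mathcal{Z}(\mathcal{C})$ whenever $V \in \mathcal{D}$ and $W \in \mathcal{D}'$, so that the pointwise tensor product assembles into a tensor functor $F: \mathcal{D} \boxtimes \overline{\mathcal{D}'} \to \mathcal{Z}(\mathcal{C})$ whose image is precisely $T_+(\mathcal{D}) \vee T_-(\mathcal{D}')$. Via \eqref{eq:FPdim-ind}, computing $\FPdim(T_+(\mathcal{D}) \vee T_-(\mathcal{D}'))$ amounts to computing the Frobenius-Perron dimension of the right adjoint of $F$ evaluated at the unit object, which I expect to be representable as a coend of the type studied in Lemma~\ref{lem:FPdim-coend}, concentrated on $\mathcal{D} \cap \mathcal{C}'$. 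This delicate calculation is the step that prevents the theorem from being a purely formal consequence of Lemma~\ref{lem:FPdim-cap} and is where I anticipate most of the work to lie.

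Given \eqref{eq:FPdim-centralizer-1}, the derivation of \eqref{eq:FPdim-centralizer-2} is elementary. Applying \eqref{eq:FPdim-centralizer-1} with $\mathcal{D}'$ in place of $\mathcal{D}$, and using $\mathcal{C}' \subseteq \mathcal{D}'$ so that $\mathcal{D}' \cap \mathcal{C}' = \mathcal{C}'$, one obtains $\FPdim(\mathcal{D}') \FPdim(\mathcal{D}'') = \FPdim(\mathcal{C}) \FPdim(\mathcal{C}')$. A separate application of Lemma~\ref{lem:FPdim-cap} to $\mathcal{D}$ and $\mathcal{C}'$ inside $\mathcal{C}$ gives $\FPdim(\mathcal{D} \vee \mathcal{C}') \FPdim(\mathcal{D} \cap \mathcal{C}') = \FPdim(\mathcal{D}) \FPdim(\mathcal{C}')$, and eliminating $\FPdim(\mathcal{D} \cap \mathcal{C}')$ using \eqref{eq:FPdim-centralizer-1} produces $\FPdim(\mathcal{D} \vee \mathcal{C}') = \FPdim(\mathcal{C}) \FPdim(\mathcal{C}')/\FPdim(\mathcal{D}') = \FPdim(\mathcal{D}'')$. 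The containment $\mathcal{D} \vee \mathcal{C}' \subseteq \mathcal{D}''$ is immediate from the definitions, since $\mathcal{D} \subseteq \mathcal{D}''$ is tautological and every object of $\mathcal{C}'$ centralizes all of $\mathcal{C}$, hence in particular $\mathcal{D}'$. As a strict containment of tensor full subcategories of a finite tensor category implies a strict inequality of Frobenius-Perron dimensions, the equality of dimensions forces \eqref{eq:FPdim-centralizer-2}.
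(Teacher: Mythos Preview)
Your reduction of \eqref{eq:FPdim-centralizer-1} to the identity
\[
  \FPdim\bigl(T_+(\mathcal{D}) \vee T_-(\mathcal{D}')\bigr) = \FPdim(\mathcal{C})
\]
is valid, and your derivation of \eqref{eq:FPdim-centralizer-2} from \eqref{eq:FPdim-centralizer-1} is correct and close in spirit to the paper's (the paper applies \eqref{eq:FPdim-centralizer-1} to $\mathcal{E}=\mathcal{D}\vee\mathcal{C}'$ rather than invoking Lemma~\ref{lem:FPdim-cap} inside $\mathcal{C}$, but the arithmetic is equivalent). The problem is that you have not actually proved the displayed identity; you acknowledge it as the ``main technical obstacle'' and sketch a coend strategy that is not carried out. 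Computing the right adjoint of $F:\mathcal{D}\boxtimes\overline{\mathcal{D}'}\to T_+(\mathcal{D})\vee T_-(\mathcal{D}')$ at the unit is not a routine instance of Lemma~\ref{lem:FPdim-coend}: the target lives inside $\mathcal{Z}(\mathcal{C})$ rather than inside $\mathcal{C}$, and there is no analogue of the equivalence $\Phi_{\mathcal{A},\mathcal{B}}$ available here to identify $R(\unitobj)$ with a coend over $\mathcal{D}\cap\mathcal{C}'$. As stated, the argument is incomplete at precisely the point it is meant to do work.

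The paper circumvents this obstacle by a two-step bootstrap that you do not attempt. First it proves \eqref{eq:FPdim-centralizer-1} under the extra hypothesis that $\mathcal{C}$ is factorizable: one works in $\mathcal{Z}(\mathcal{D};\mathcal{C})$ rather than in $\mathcal{Z}(\mathcal{C})$, where Lemma~\ref{lem:FPdim-center-wrt-S} supplies the crucial dimension formula $\FPdim\,\mathcal{Z}(\mathcal{D};\mathcal{C})=\FPdim(\mathcal{D})\FPdim(\mathcal{C})$ and the dominance of the forgetful functor $\mathcal{Z}(\mathcal{C})\to\mathcal{Z}(\mathcal{D};\mathcal{C})$ lets one identify $\mathcal{C}_+\vee\mathcal{C}_-$ with all of $\mathcal{Z}(\mathcal{D};\mathcal{C})$. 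Second, since $\mathcal{Z}(\mathcal{C})$ is always factorizable, one applies the now-established factorizable case inside $\mathcal{Z}(\mathcal{C})$: embedding $\mathcal{D}$ via $T_+$ and pairing it not with $T_-(\mathcal{D}')$ but with the M\"uger centralizer $\mathcal{C}^{\checkmark}=\Mueg_{\mathcal{Z}(\mathcal{C})}(T_+(\mathcal{C}))$, one gets $\FPdim(\mathcal{C}^{\checkmark})=\FPdim(\mathcal{C})$ and $(\mathcal{D}\vee\mathcal{C}^{\checkmark})^{\checkmark}=\mathcal{D}'$ for free from the factorizable-case identities, and computing $\FPdim(\mathcal{D}\vee\mathcal{C}^{\checkmark})$ two ways yields \eqref{eq:FPdim-centralizer-1}. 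The key idea you are missing is this use of the factorizability of $\mathcal{Z}(\mathcal{C})$ together with Lemma~\ref{lem:FPdim-center-wrt-S}; it replaces the coend computation you anticipate with structural facts already in hand.
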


In particular, if $\mathcal{C}$ is factorizable (or, equivalently, non-degenerate), then
\begin{gather}
  \label{eq:FPdim-centralizer-1-nd}
  \FPdim(\mathcal{D}) \FPdim(\mathcal{D}') = \FPdim(\mathcal{C}), \\
  \label{eq:FPdim-centralizer-2-nd}
  \mathcal{D}'' = \mathcal{D}
\end{gather}
by Theorem~\ref{thm:factor-tricen}. Equations \eqref{eq:FPdim-centralizer-1}--\eqref{eq:FPdim-centralizer-2-nd} have been known in the semisimple case \cite{MR3242743}. Our proof goes along almost the same way as \cite[\S8.21]{MR3242743}.

\begin{proof}
  {\bf Factorizable case}.
  We first prove this theorem under the assumption that $\mathcal{C}$ is factorizable. Let $U: \mathcal{Z}(\mathcal{C}) \to \mathcal{Z}(\mathcal{D}; \mathcal{C})$ be the forgetful functor. We define the tensor full subcategories $\mathcal{C}_{+}$ and $\mathcal{C}_{-}$ of $\mathcal{Z}(\mathcal{D}; \mathcal{C})$ by
  \begin{equation*}
    \mathcal{C}_{\pm} = \mathrm{Im}(U \circ T_{\pm}),
  \end{equation*}
  where $T_{+}$ and $T_{-}$ are the functors given by~\eqref{eq:C-embeds-ZC}. By the same argument as in the proof of Theorem~\ref{thm:factor-tricen}, we have
  \begin{equation}
    \label{eq:FPdim-centra-pf-1}
    \FPdim(\mathcal{C}_{+} \cap \mathcal{C}_{-}) \cdot \FPdim(\mathcal{C}_{+} \vee \mathcal{C}_{-})
    = \FPdim(\mathcal{C})^2.
  \end{equation}
  Let $G$ be the functor used to define the factorizability. By the assumption, $G$ is an equivalence. Since $U$ is dominant as proved in Lemma~\ref{lem:FPdim-center-wrt-S}, we have
  \begin{equation*}
    \mathcal{C}_{+} \vee \mathcal{C}_{-} = \mathrm{Im}(U \circ G) = \mathrm{Im}(U) = \mathcal{Z}(\mathcal{D}; \mathcal{C}).
  \end{equation*}
  We also have $\mathcal{C}_{+} \cap \mathcal{C}_{-} \approx \mathcal{D}'$. Hence, by~\eqref{eq:FPdim-center-wrt-S} and~\eqref{eq:FPdim-centra-pf-1},
  \begin{equation*}
    \FPdim(\mathcal{D}') \cdot \FPdim(\mathcal{D}) \FPdim(\mathcal{C}) = \FPdim(\mathcal{C})^2.
  \end{equation*}
  Since $\FPdim(\mathcal{C}) \ne 0$, we obtain~\eqref{eq:FPdim-centralizer-1-nd}, which is equivalent to~\eqref{eq:FPdim-centralizer-1} in this case. By replacing $\mathcal{D}$ with $\mathcal{D}'$ in \eqref{eq:FPdim-centralizer-1}, we also obtain
  \begin{equation*}
    \FPdim(\mathcal{D}') \FPdim(\mathcal{D}'') = \FPdim(\mathcal{C}).
  \end{equation*}
  Thus $\FPdim(\mathcal{D}) = \FPdim(\mathcal{D}'')$. Since $\mathcal{D} \subset \mathcal{D}''$, we have $\mathcal{D} = \mathcal{D}''$.

  \medskip \noindent
  {\bf General case}. Now we prove the general case.
  For a full subcategory $\mathcal{S}$ of $\mathcal{Z}(\mathcal{C})$, we set $\mathcal{S}^{\checkmark} = \Mueg_{\mathcal{Z}(\mathcal{C})}(\mathcal{S})$. We regard $\mathcal{C}$ and $\mathcal{D} \subset \mathcal{C}$ as full subcategories of $\mathcal{Z}(\mathcal{C})$ by the functor $T_{+}: \mathcal{C} \to \mathcal{Z}(\mathcal{C})$ given by~\eqref{eq:C-embeds-ZC}. Since $\mathcal{Z}(\mathcal{C})$ is factorizable \cite{MR2097289}, and since this theorem has been proved in the factorizable case, we have
  \begin{equation}
    \label{eq:FPdim-C-check}
    \FPdim(\mathcal{C}^{\checkmark})
    = \frac{\FPdim(\mathcal{Z}(\mathcal{C}))}{\FPdim(\mathcal{C})}
    = \FPdim(\mathcal{C}).
  \end{equation}
  For a full subcategory $\mathcal{S} \subset \mathcal{C}$, we still denote by $\mathcal{S}' = \Mueg_{\mathcal{C}}(\mathcal{S})$ the M\"uger centralizer of $\mathcal{S}$ in $\mathcal{C}$. We also have the following equations:
  \begin{equation}
    \label{eq:FPdim-centra-pf-7}
    \mathcal{D} \cap \mathcal{C}^{\checkmark}
    = \mathcal{D} \cap \mathcal{C}'
    \quad \text{and} \quad
    (\mathcal{D} \vee \mathcal{C}^{\checkmark})^{\checkmark}
    = \mathcal{D}^{\checkmark} \cap \mathcal{C}^{\checkmark\checkmark}
    = \mathcal{D}^{\checkmark} \cap \mathcal{C}
    = \mathcal{D}'.
  \end{equation}
  Now we compute the Frobenius-Perron dimension of $\mathcal{D} \vee \mathcal{C}^{\checkmark} \subset \mathcal{Z}(\mathcal{C})$ in two different ways: First, by Lemma~\ref{lem:FPdim-cap} and equations~\eqref{eq:FPdim-centralizer-1-nd} and~\eqref{eq:FPdim-centra-pf-7}, we have
  \begin{equation}
    \label{eq:FPdim-centra-pf-2}
    \FPdim(\mathcal{D} \vee \mathcal{C}^{\checkmark})
    = \frac{\FPdim(\mathcal{D}) \FPdim(\mathcal{C}^{\checkmark})}{\FPdim(\mathcal{D} \cap \mathcal{C}^{\checkmark})}
    = \frac{\FPdim(\mathcal{D}) \FPdim(\mathcal{C})}{\FPdim(\mathcal{D} \cap \mathcal{C}')}.
  \end{equation}
  Second, by equations \eqref{eq:FPdim-C-check} and~\eqref{eq:FPdim-centra-pf-7}, we have
  \begin{equation}
    \label{eq:FPdim-centra-pf-3}
      \FPdim(\mathcal{D} \vee \mathcal{C}^{\checkmark})
      = \frac{\FPdim \mathcal{Z}(\mathcal{C})}{\FPdim((\mathcal{D} \vee \mathcal{C}^{\checkmark})^{\checkmark})}
      = \frac{\FPdim (\mathcal{C})^2}{\FPdim(\mathcal{D}')}.      
  \end{equation}
  Comparing~\eqref{eq:FPdim-centra-pf-2} and~\eqref{eq:FPdim-centra-pf-3}, we get~\eqref{eq:FPdim-centralizer-1}. Finally, we prove~\eqref{eq:FPdim-centralizer-2} as follows: By replacing $\mathcal{D}$ with $\mathcal{D}'$ in \eqref{eq:FPdim-centralizer-1}, we have
  \begin{equation}
    \label{eq:FPdim-centra-pf-4}
    \begin{aligned}
      \FPdim(\mathcal{D}') \FPdim(\mathcal{D}'')
      & = \FPdim(\mathcal{C}) \FPdim(\mathcal{D}' \cap \mathcal{C}') \\
      & = \FPdim(\mathcal{C}) \FPdim(\mathcal{C}').
    \end{aligned}
  \end{equation}
  Now we consider the tensor full subcategory $\mathcal{E} = \mathcal{C}' \vee \mathcal{D}$ of $\mathcal{C}$. The M\"uger centralizer of $\mathcal{E}$ in $\mathcal{C}$ is computed as follows:
  \begin{equation}
    \label{eq:FPdim-centra-pf-5}
    \mathcal{E}' = (\mathcal{C}' \vee \mathcal{D})' = \mathcal{C}'' \cap \mathcal{D}' = \mathcal{C } \cap \mathcal{D}' = \mathcal{D}'.
  \end{equation}
  By applying \eqref{eq:FPdim-centralizer-1} to $\mathcal{E}$, we have
  \begin{equation}
    \label{eq:FPdim-centra-pf-6}
    \begin{aligned}
      \FPdim(\mathcal{E}) \FPdim(\mathcal{E}')
      & = \FPdim(\mathcal{C}) \FPdim(\mathcal{E}' \cap \mathcal{C}') \\
      & = \FPdim(\mathcal{C}) \FPdim(\mathcal{C}').
    \end{aligned}
  \end{equation}
  By~\eqref{eq:FPdim-centra-pf-4}--\eqref{eq:FPdim-centra-pf-6}, we have $\FPdim(\mathcal{D}'') = \FPdim(\mathcal{E})$. Since $\mathcal{E} \subset \mathcal{D}''$, we conclude that $\mathcal{E} = \mathcal{D}''$. Thus~\eqref{eq:FPdim-centralizer-2} is proved.
\end{proof}

\section{Weak-factorizability}
\label{sec:weak-factori}

\subsection{Comparison to the original definition}
\label{subsec:vs-S-matrix}

In this section, we first explain the relation between the non-degeneracy of a braided finite tensor category and the $S$-matrix of a ribbon fusion category. Then we give another condition for braided finite tensor categories that is equivalent to the non-degeneracy.

Let $\mathcal{C}$ be a finite tensor category. There are two vector spaces
\begin{equation*}
  \mathrm{CE}(\mathcal{C}) = \Hom_{\mathcal{C}}(\mathbb{F}, \unitobj)
  \quad \text{and} \quad
  \mathrm{CF}(\mathcal{C}) = \Hom_{\mathcal{C}}(\unitobj, \mathbb{F})
\end{equation*}
associated to the coend $\mathbb{F} = \int^{X \in \mathcal{C}} X^* \otimes X$. The former can be thought of as the space of `central elements'. Indeed, we have
\begin{equation}
  \label{eq:CE-interpretation}
  \mathrm{CE}(\mathcal{C})
  \cong \int_{X \in \mathcal{C}} \Hom_{\mathcal{C}}(X^* \otimes X, \unitobj)
  \cong \int_{X \in \mathcal{C}} \Hom_{\mathcal{C}}(X, X)
  \cong \End(\id_{\mathcal{C}})
\end{equation}
by the basic properties of ends and coends \cite[IX]{MR1712872}. On the other hand, if $\mathcal{C} = \Rep(H)$ for some finite-dimensional Hopf algebra $H$, then we have
\begin{equation}
  \label{eq:CF-interpretation}
  \mathrm{CF}(\mathcal{C})
  \cong \{ f \in \Hom_k(H, k) \mid
  \text{$f(b a) = f(a S^2(b))$ for all $a, b \in H$} \},
\end{equation}
where $S$ is the antipode of $H$ (see the end of Section 3 of \cite{2015arXiv150401178S}). Thus $\mathrm{CF}(\mathcal{C})$ is an analogue of the space of `class functions'.

If $\mathcal{C}$ is braided, then the Hopf pairing $\omega: \mathbb{F} \otimes \mathbb{F} \to \unitobj$ induces a linear map
\begin{equation}
  \label{eq:def-Omega-C}
  \Omega_{\mathcal{C}}: \mathrm{CF}(\mathcal{C}) \to \mathrm{CE}(\mathcal{C}),
  \quad f \mapsto (f \otimes \id) \circ \omega.
\end{equation}
Let $H$ be a finite-dimensional quasitriangular Hopf algebra. By~\eqref{eq:CE-interpretation}, \eqref{eq:CF-interpretation} and the description of $\omega_{\Rep(H)}$ given in \cite[\S7.4.6]{MR1862634}, we see that $\Omega_{\Rep(H)}$ is bijective if and only if $H$ is {\em weakly-factorizable} in the sense of Takeuchi \cite[Definition 5.1]{MR1850651}. Based on the above discussion, we introduce the following terminology:

\begin{definition}
  A braided finite tensor category $\mathcal{C}$ is {\em weakly-factorizable} if the linear map $\Omega_{\mathcal{C}}$ is bijective.
\end{definition}

As observed in  Bakalov-Kirillov \cite{MR1797619} and Takeuchi \cite{MR1850651}, the map $\Omega_{\mathcal{C}}$ closely relates to the $S$-matrix in the semisimple case. Let $\mathcal{C}$ be a ribbon fusion category with braiding $\sigma$ and twist $\theta$, and let $\{ V_i \}_{i = 0}^m$ be the complete set of representatives of the isomorphism classes of simple objects with $V_0 = \unitobj$. For $i, j = 0, \dotsc, m$, the $(i, j)$-th entry $s_{i j}$ of the $S$-matrix is defined to be the quantum trace of
\begin{equation*}
  V_i \otimes V_j^*
  \xrightarrow{\quad \sigma \quad} V_j^* \otimes V_i
  \xrightarrow{\quad \sigma \quad} V_i \otimes V_j^*.
\end{equation*}
As $\mathcal{C}$ is semisimple, we may assume
\begin{equation}
  \label{eq:coend-ss}
  \mathbb{F} = \bigoplus_{i = 0}^m V_i^* \otimes V_i
\end{equation}
and the $X$-th component $i(\unitobj; X): X^* \otimes X \to \mathbb{F}$ of the universal dinatural transformation is just the inclusion morphism if $X$ is one of $V_0, \dotsc, V_m$. We set
\begin{equation*}
  \coev_X' = (\id_{X^*} \otimes \theta_X) \circ \sigma_{X,X^*} \circ \coev_X \quad (X \in \mathcal{C})
\end{equation*}
and then define $\chi_i \in \mathrm{CF}(\mathcal{C})$ and $e_i \in \mathrm{CF}(\mathcal{C})$ ($i = 0, \dotsc, m$) by
\begin{equation*}
  \chi_i: \unitobj
  \xrightarrow{\ \coev' \ }
  V_i^* \otimes V_i
  \xrightarrow{\ \text{inclusion} \ } \mathbb{F}
  \text{\quad and \quad}
  e_i: \mathbb{F}
  \xrightarrow{\ \text{projection} \ }
  V_i^* \otimes V_i
  \xrightarrow{\ \eval \ } \unitobj.
\end{equation*}
The sets $\{ \chi_i \}_{i = 0}^m$ and $\{ e_i \}_{i = 0}^m$ are bases of $\mathrm{CF}(\mathcal{C})$ and $\mathrm{CE}(\mathcal{C})$, respectively. With respect to these bases, the linear map $\Omega_{\mathcal{C}}$ is represented as
\begin{equation*}
  \Omega_{\mathcal{C}}(\chi_i) = \sum_{j = 0}^m \frac{s_{i j}}{s_{0 j}} e_j \quad (i = 0, \dotsc, m).
\end{equation*}
Consequently, the ribbon fusion category $\mathcal{C}$ is a modular tensor category if and only if $\mathcal{C}$ is weakly-factorizable.

For a general braided finite tensor category $\mathcal{C}$, the $S$-matrix of $\mathcal{C}$ cannot be defined at least in an obvious way. On the other hand, the map $\Omega_{\mathcal{C}}$ can be defined in the non-semisimple case. Thus it is natural to ask how the weak-factorizability relates to the non-degeneracy of $\mathcal{C}$. We answer this question by proving:

\begin{theorem}
  \label{thm:inj-tricen}
  If the map $\Omega_{\mathcal{C}}$ is injective, then the M\"uger center of $\mathcal{C}$ is trivial.
\end{theorem}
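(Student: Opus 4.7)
The plan is to exploit the smaller coend
\[
  \mathbb{F}' = \int^{X \in \mathcal{C}'} X^* \otimes X,
\]
which exists by Lemmas~\ref{lem:coend-adj} and~\ref{lem:coend-exist}, together with a canonical monomorphism $\phi : \mathbb{F}' \to \mathbb{F}$ satisfying $\phi \circ i'_X = i_{\mathcal{C}}(\unitobj; X)$ for all $X \in \mathcal{C}'$, where $i'$ denotes the universal dinatural transformation of $\mathbb{F}'$. The existence of $\phi$ as a monomorphism is taken as input from \cite{2015arXiv150401178S}; in particular the induced map $\Hom_{\mathcal{C}}(\unitobj, \phi)$ is injective.

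The first main step is to compute $\omega_{\mathcal{C}} \circ (\phi \otimes \id_{\mathbb{F}})$. For $X \in \mathcal{C}'$ and any $Y \in \mathcal{C}$, the defining property of the M\"uger center gives $\sigma_{Y^*, X} \circ \sigma_{X, Y^*} = \id_{X \otimes Y^*}$, so formula~\eqref{eq:F-def-omega} collapses to
\[
  \omega_{\mathcal{C}} \circ (i_{\mathcal{C}}(\unitobj; X) \otimes i_{\mathcal{C}}(\unitobj; Y)) = \eval_X \otimes \eval_Y = (\varepsilon \otimes \varepsilon) \circ (i_{\mathcal{C}}(\unitobj; X) \otimes i_{\mathcal{C}}(\unitobj; Y)),
\]
using $\varepsilon \circ i_{\mathcal{C}}(\unitobj; X) = \eval_X$. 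The universal property of $\mathbb{F}' \otimes \mathbb{F}$ (via Fubini for coends) then forces $\omega_{\mathcal{C}} \circ (\phi \otimes \id_{\mathbb{F}}) = (\varepsilon \circ \phi) \otimes \varepsilon$. Consequently, for every $f \in \Hom_{\mathcal{C}}(\unitobj, \mathbb{F}')$ the morphism $\Omega_{\mathcal{C}}(\phi \circ f)$ is the scalar $\varepsilon \circ \phi \circ f \in k$ times $\varepsilon$. Choosing $f = i'_{\unitobj}$ makes this scalar equal to $\id_{\unitobj}$, so the image of $\Omega_{\mathcal{C}} \circ \Hom_{\mathcal{C}}(\unitobj, \phi)$ is precisely $k \cdot \varepsilon$. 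Injectivity of $\Omega_{\mathcal{C}}$ then implies that this composite is injective while landing in a one-dimensional space, whence $\dim_k \Hom_{\mathcal{C}}(\unitobj, \mathbb{F}') = 1$.

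The remaining and principal task is to deduce $\mathcal{C}' \approx \mathrm{Vec}$ from this numerical equality. The M\"uger center $\mathcal{C}'$, with its inherited braiding, is symmetric (any two of its objects centralize each other by definition) and hence a unimodular braided finite tensor category, placing us in the regime of the integral theory developed in \cite{2015arXiv150401178S}. The hard part of the proof, and the main obstacle, is to use that theory to establish a lower bound on $\dim_k \Hom_{\mathcal{C}'}(\unitobj, \mathbb{F}')$ in terms of structural invariants of $\mathcal{C}'$: concretely, one wants to produce, out of the Frobenius/cointegral pairing on $\mathbb{F}'$ available because $\mathcal{C}'$ is unimodular, an element of $\Hom_{\mathcal{C}'}(\unitobj, \mathbb{F}')$ linearly independent from $i'_{\unitobj}$ whenever $\mathcal{C}'$ admits a composition factor not isomorphic to $\unitobj$ or a nontrivial self-extension of $\unitobj$. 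Granting such a bound, the equality $\dim_k \Hom_{\mathcal{C}'}(\unitobj, \mathbb{F}') = 1$ derived above forces $\unitobj$ to be the only simple object of $\mathcal{C}'$ and to have no self-extensions, so $\mathcal{C}' \approx \mathrm{Vec}$, which is precisely the assertion that the M\"uger center is trivial.
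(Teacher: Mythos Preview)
Your reduction to $\dim_k \Hom_{\mathcal{C}}(\unitobj,\mathbb{F}')=1$ is correct and matches the paper exactly: the computation of $\omega_{\mathcal{C}}\circ(\phi\otimes\id_{\mathbb{F}})$, the one-dimensionality of the image, and the use of injectivity of $\Omega_{\mathcal{C}}$ are all as in \S\ref{subsec:proof-thm-inj-tricen}.

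The gap is in the second half. Your assertion that $\mathcal{C}'$ is symmetric ``and hence a unimodular braided finite tensor category'' is false: symmetry does not imply unimodularity. The paper itself supplies the counterexample in Remark~\ref{rem:Sw-4-dim-Hopf}: for Sweedler's four-dimensional Hopf algebra $H$ with the given $R$-matrix, $\mathcal{C}=\Rep(H)$ is symmetric (so $\mathcal{C}'=\mathcal{C}$), yet $\dim_k\mathrm{CE}(\mathcal{C})=1\neq 2=\dim_k\mathrm{CF}(\mathcal{C})$, which by \cite{2015arXiv150401178S} is impossible for a unimodular category. Since your subsequent appeal to the Frobenius/cointegral machinery of \cite{2015arXiv150401178S} requires unimodularity as a hypothesis, the argument does not go through as written; and the remainder (``one wants to produce\ldots Granting such a bound'') is in any case only a wish, not a proof.

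The paper closes this gap via Lemma~\ref{lem:triviality-lemma}, whose proof \emph{derives} unimodularity rather than assuming it. The mechanism is the lower bound of Lemma~\ref{lem:lower-bound-CF}: the number of simples $V_i$ with $V_i\cong V_i^{**}$ is at most $\dim_k\mathrm{CF}$. The distinguished invertible object $D$ always satisfies $D\cong D^{**}$, so $\dim_k\mathrm{CF}(\mathcal{C}')=1$ forces $D\cong\unitobj$, i.e.\ $\mathcal{C}'$ is unimodular. Only then does one invoke \cite{2015arXiv150401178S}: the Fourier transform gives $\mathrm{CE}(\mathcal{C}')\cong\mathrm{CF}(\mathcal{C}')\cong k$, the Maschke-type theorem then yields semisimplicity, and a second application of Lemma~\ref{lem:lower-bound-CF} (now with every simple satisfying $V_i\cong V_i^{**}$) shows $\unitobj$ is the only simple.
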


Combining this theorem with Theorems~\ref{thm:factor-nondege} and~\ref{thm:factor-tricen}, we see that a braided finite tensor category $\mathcal{C}$ is non-degenerate if and only if it is factorizable, if and only if it is weakly-factorizable, if and only if the map $\Omega_{\mathcal{C}}$ is injective, if and only if the M\"uger center of $\mathcal{C}$ is trivial.

\begin{remark}
  If $\mathcal{C}$ is {\em unimodular} in the sense of \cite{MR2097289}, then $\mathrm{CE}(\mathcal{C})$ and $\mathrm{CF}(\mathcal{C})$ have the same dimension \cite{2015arXiv150401178S}. Thus, in this case, $\Omega_{\mathcal{C}}$ is bijective if and only if it is surjective.
\end{remark}

\begin{remark}
  \label{rem:Sw-4-dim-Hopf}
  The surjectivity of the map $\Omega_{\mathcal{C}}$ does not imply the bijectivity of $\Omega_{\mathcal{C}}$ in general. Indeed, let $H$ be the algebra over the field $\mathbb{C}$ of complex numbers generated by $g$ and $x$ subject to the relations $g^2 = 1$, $x^2 = 0$ and $g x = - x g$. The algebra $H$ is a quasitriangular Hopf algebra (called Sweedler's four-dimensional Hopf algebra in literature) with the comultiplication $\Delta$, the counit $\varepsilon$, the antipode $S$ and the universal R-matrix $R$ determined by
  \begin{gather*}
    \Delta(g) = g \otimes g,
    \quad \Delta(x) = x \otimes g + 1 \otimes x,
    \quad \varepsilon(g) = 1,
    \quad \varepsilon(x) = 0, \\
    \quad S(g) = g,
    \quad S(x) = - g x,
    \quad R = \frac{1}{2} (1 \otimes 1 + 1 \otimes g + g \otimes 1 - g \otimes g).
  \end{gather*}
  The category $\mathcal{C} := \Rep(H)$ is a symmetric finite tensor category. Let $\alpha: H \to \mathbb{C}$ be the algebra map sending $g$ and $x$ to $-1$ and $0$, respectively. Then we have
  \begin{equation*}
    \mathrm{CE}(\mathcal{C})
    = \End(\id_{\mathcal{C}}) \cong \mathbb{C}
    \quad \text{and} \quad
    \mathrm{CF}(\mathcal{C})
    = \mathrm{span}_{\mathbb{C}} \{ \varepsilon, \alpha \}
  \end{equation*}
  under the identifications~\eqref{eq:CE-interpretation} and~\eqref{eq:CF-interpretation}. The map $\Omega_{\mathcal{C}}$ is surjective in this case, but not bijective.
\end{remark}

\subsection{Results on class functions}

Let $\mathcal{C}$ be a finite tensor category. We provide some results on the space $\mathrm{CF}(\mathcal{C})$ of `class functions' to prove Theorem~\ref{thm:inj-tricen}. Let $\mathcal{S}$ be a topologizing subcategory of $\mathcal{C}$. As we have mentioned in \S\ref{subsec:FPdim-subsec}, the coend
\begin{equation*}
  \mathbb{F}_{\mathcal{S}} = \int^{X \in \mathcal{S}} X^* \otimes X \in \mathcal{C}
\end{equation*}
exists. Let $i_{\mathcal{S}}(X): X^* \otimes X \to \mathbb{F}_{\mathcal{S}}$ ($X \in \mathcal{S}$) denote the universal dinatural transformation of the coend $\mathbb{F}_{\mathcal{S}}$. Note that $\mathbb{F} = \mathbb{F}_{\mathcal{C}}$. By the universal property of the coend $\mathbb{F}_{\mathcal{S}}$, there is a unique morphism $\phi_{\mathcal{S}}: \mathbb{F}_{\mathcal{S}} \to \mathbb{F}$ in $\mathcal{C}$ such that the equation
\begin{equation}
  \label{eq:coend-can-inc}
  i_{\mathcal{C}}(X) = \phi_{\mathcal{S}} \circ i_{\mathcal{S}}(X)
\end{equation}
holds for all objects $X \in \mathcal{S}$. We call $\phi_{\mathcal{S}}$ the {\em canonical inclusion} in view of the following lemma proved in \cite{2015arXiv150401178S}:

\begin{lemma}
  Let $\mathcal{S}$ be a topologizing subcategory of a finite tensor category $\mathcal{C}$. Then the morphism $\phi_{\mathcal{S}}$ defined by~\eqref{eq:coend-can-inc} is a monomorphism. Thus, in particular, the following linear map is injective:
  \begin{equation}
    \label{eq:cf-subcat-inc}
    \Hom_{\mathcal{C}}(\unitobj, \mathbb{F}_{\mathcal{S}})
    \xrightarrow{\quad \Hom_{\mathcal{C}}(\unitobj, \phi_{\mathcal{S}}) \quad}
    \Hom_{\mathcal{C}}(\unitobj, \mathbb{F})
    = \mathrm{CF}(\mathcal{C}).
  \end{equation}
\end{lemma}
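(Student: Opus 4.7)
The plan is to exhibit $\phi_{\mathcal{S}}$ as the image of a pointwise monic natural transformation under a composition of an equivalence and an exact functor, so that its monomorphicity becomes automatic.

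By Lemma~\ref{lem:topologizing}, the inclusion $i \colon \mathcal{S} \hookrightarrow \mathcal{C}$ admits a right adjoint $t \colon \mathcal{C} \to \mathcal{S}$; set $\widetilde{t} = i \circ t \in \LEX(\mathcal{C}, \mathcal{C})$ and let $\epsilon \colon \widetilde{t} \to \id_{\mathcal{C}}$ be the counit. Each $\epsilon_M$ is the inclusion of the largest $\mathcal{S}$-subobject of $M$, so $\epsilon$ is pointwise monic; since kernels in $\LEX(\mathcal{C}, \mathcal{C})$ are computed pointwise (a limit of left exact functors is left exact), $\epsilon$ is in fact a monomorphism in that finite abelian category. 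Now introduce the $k$-linear functor
$$M \colon \mathcal{C}^{\op} \boxtimes \mathcal{C} \to \mathcal{C}, \quad X \boxtimes Y \mapsto X^* \otimes Y,$$
which is exact by biexactness of $\otimes$ in the rigid category $\mathcal{C}$. Because $M$ is right exact, it commutes with the coend defining $\overline{\Phi}_{\mathcal{C}, \mathcal{C}}$, so that
$$M\bigl(\overline{\Phi}(F)\bigr) \;\cong\; \int^{X \in \mathcal{C}} X^* \otimes F(X)$$
for every $F \in \LEX(\mathcal{C}, \mathcal{C})$. Taking $F = \id_{\mathcal{C}}$ recovers $\mathbb{F}$, while Lemma~\ref{lem:coend-adj} applied to the adjunction $i \dashv t$ with $P(X, Y) = X^* \otimes Y$ yields $M(\overline{\Phi}(\widetilde{t})) \cong \int^{X \in \mathcal{S}} X^* \otimes X = \mathbb{F}_{\mathcal{S}}$.

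The key step is to identify $\phi_{\mathcal{S}}$ with $M(\overline{\Phi}(\epsilon)) \colon \mathbb{F}_{\mathcal{S}} \to \mathbb{F}$. By construction, the latter is the unique morphism out of $\mathbb{F}_{\mathcal{S}}$ induced by the dinatural family $i_{\mathcal{C}}(X) \circ (\id_{X^*} \otimes \epsilon_X) \colon X^* \otimes \widetilde{t}(X) \to \mathbb{F}$ indexed by $X \in \mathcal{C}$. Restricting to $X \in \mathcal{S}$, where $\epsilon_X$ is the identity, this family is precisely $i_{\mathcal{C}}(X)$, which is the defining property~\eqref{eq:coend-can-inc} of $\phi_{\mathcal{S}}$; uniqueness then forces the two morphisms to agree. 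The conclusion is immediate: the equivalence $\overline{\Phi}$ preserves monomorphisms, and $M$, being exact, preserves them as well, so $\phi_{\mathcal{S}}$ is a monomorphism in $\mathcal{C}$. Injectivity of the map in~\eqref{eq:cf-subcat-inc} then follows from left-exactness of $\Hom_{\mathcal{C}}(\unitobj, -)$.

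I expect the main obstacle to be making the identification $\phi_{\mathcal{S}} = M(\overline{\Phi}(\epsilon))$ fully rigorous: one must chase universal dinatural transformations through the isomorphism of Lemma~\ref{lem:coend-adj}, which is itself assembled from the unit and counit of the adjunction $i \dashv t$, and verify that these identifications interact compatibly with the coend expression for $\overline{\Phi}$ and with the functor $M$. Once the naturality bookkeeping is in place, everything else is formal.
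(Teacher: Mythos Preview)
The paper does not actually prove this lemma: it is stated with a citation to the author's earlier work \cite{2015arXiv150401178S}, so there is no in-paper argument to compare against directly.

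Your proposal is correct and is essentially the argument one would expect given the machinery the paper has already set up in \S2 (the equivalence $\overline{\Phi}_{\mathcal{C},\mathcal{C}}$ of~\eqref{eq:def-Phi-inverse}, Lemma~\ref{lem:coend-adj}, and the exactness of $X \boxtimes Y \mapsto X^* \otimes Y$ used just before Lemma~\ref{lem:coend-exist}). Two minor comments. First, the detour through kernels in $\LEX(\mathcal{C},\mathcal{C})$ is unnecessary: a pointwise monic natural transformation is automatically left-cancellable in any functor category, hence a categorical monomorphism, and that is all an equivalence needs to preserve. Second, the identification you flag as the obstacle is straightforward once you note that for the adjunction $i \dashv t$ the unit $Y \to t i(Y)$ is the identity on objects $Y \in \mathcal{S}$ (since $i$ is fully faithful); this is precisely what makes the canonical isomorphism of Lemma~\ref{lem:coend-adj} match the $\mathcal{S}$-indexed universal dinatural $i_{\mathcal{S}}(Y)$ with the restriction of the $\mathcal{C}$-indexed one, so that your composite really is characterized by~\eqref{eq:coend-can-inc}. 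The remaining verifications are routine.
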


The above lemma yields the following lower bound of the dimension of the space of class functions (we have considered the case where $\mathcal{C}$ is pivotal in \cite{2015arXiv150401178S}).

\begin{lemma}
  \label{lem:lower-bound-CF}
  Let $\{ V_i \}_{i = 0}^m$ be the complete set of representatives of the isomorphism classes of simple objects of a finite tensor category $\mathcal{C}$. Then we have
  \begin{equation}
    \label{ineq:lower-bound-CF}
    \# \{ i = 0, \dotsc, m \mid V_i \cong V_i^{**} \} \le \dim_k \mathrm{CF}(\mathcal{C}).
  \end{equation}
\end{lemma}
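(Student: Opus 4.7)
The plan is to exhibit a topologizing subcategory $\mathcal{S} \subset \mathcal{C}$ for which $\dim_k \Hom_{\mathcal{C}}(\unitobj, \mathbb{F}_{\mathcal{S}})$ equals the left-hand side of \eqref{ineq:lower-bound-CF}, and then invoke the injectivity of \eqref{eq:cf-subcat-inc} established in the preceding lemma.

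Let $I = \{ i \in \{0, \dotsc, m\} \mid V_i \cong V_i^{**} \}$. I would define $\mathcal{S}$ to be the full subcategory of $\mathcal{C}$ consisting of all objects isomorphic to $\bigoplus_{i \in I} V_i^{\oplus n_i}$ for some non-negative integers $n_i$. First I would check that $\mathcal{S}$ is topologizing: closure under finite direct sums is built in, and since every object of $\mathcal{S}$ is semisimple in $\mathcal{C}$, every subobject (hence every subquotient) of an object of $\mathcal{S}$ is a direct summand, which by Jordan--H\"older is of the prescribed form. So $\mathcal{S}$ is closed under subquotients.

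Next I would compute $\mathbb{F}_{\mathcal{S}}$. Since $\mathcal{S}$ is a finite semisimple $k$-linear category with simple objects $\{V_i\}_{i \in I}$ and $\End_{\mathcal{C}}(V_i) = k$ (by Schur's lemma and the algebraic closedness of $k$), the coend $\int^{X \in \mathcal{S}} T(X, X)$ of any functor $T \colon \mathcal{S}^{\op} \times \mathcal{S} \to \mathcal{C}$ reduces to $\bigoplus_{i \in I} T(V_i, V_i)$: the only nontrivial dinaturality conditions come from scalar endomorphisms of the $V_i$ and are automatic. Applying this with $T(X, Y) = X^* \otimes Y$ yields
$$\mathbb{F}_{\mathcal{S}} \cong \bigoplus_{i \in I} V_i^* \otimes V_i,$$
and then the duality adjunction gives
$$\Hom_{\mathcal{C}}(\unitobj, \mathbb{F}_{\mathcal{S}}) \cong \bigoplus_{i \in I} \Hom_{\mathcal{C}}(V_i, V_i^{**}).$$
For each $i \in I$, the hypothesis $V_i \cong V_i^{**}$ combined with Schur's lemma makes $\Hom_{\mathcal{C}}(V_i, V_i^{**})$ one-dimensional, so $\dim_k \Hom_{\mathcal{C}}(\unitobj, \mathbb{F}_{\mathcal{S}}) = |I|$.

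The previous lemma then says that the canonical map $\Hom_{\mathcal{C}}(\unitobj, \phi_{\mathcal{S}})$ is injective, so
$$|I| \;=\; \dim_k \Hom_{\mathcal{C}}(\unitobj, \mathbb{F}_{\mathcal{S}}) \;\le\; \dim_k \mathrm{CF}(\mathcal{C}),$$
which is exactly \eqref{ineq:lower-bound-CF}. The only delicate point, and the one I expect to require the most care, is verifying that the coend of $X^* \otimes Y$ over the semisimple subcategory $\mathcal{S}$ genuinely computes as the stated external direct sum in $\mathcal{C}$; everything else is a routine application of Schur's lemma and the injectivity result from the preceding lemma.
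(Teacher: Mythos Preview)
Your argument is correct and follows essentially the same approach as the paper: choose a semisimple topologizing subcategory, compute the coend as a finite direct sum of $V_i^* \otimes V_i$, and combine Schur's lemma with the injectivity of \eqref{eq:cf-subcat-inc}. The only cosmetic difference is that the paper takes $\mathcal{S}$ to be the full subcategory of \emph{all} semisimple objects (so the sum runs over every $i$, with the terms for $i \notin I$ vanishing), whereas you restrict to simples in $I$ from the outset; also note that the adjunction yields $\Hom_{\mathcal{C}}(V_i^{**}, V_i)$ rather than $\Hom_{\mathcal{C}}(V_i, V_i^{**})$, though of course these have the same dimension.
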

\begin{proof}
  Let $\mathcal{S}$ be the full subcategory of $\mathcal{C}$ consisting of semisimple objects. As $\mathcal{S}$ is semisimple, $\mathbb{F}_{\mathcal{S}}$ is of the form~\eqref{eq:coend-ss}. Thus we have
  \begin{equation*}
    \Hom_{\mathcal{C}}(\unitobj, \mathbb{F}_{\mathcal{S}})
    \cong \bigoplus_{i = 0}^m \Hom_{\mathcal{C}}(\unitobj, V_i^* \otimes V_i)
    \cong \bigoplus_{i = 0}^m \Hom_{\mathcal{C}}(V_i^{**}, V_i).
  \end{equation*}
  By Schur's lemma, $\dim_k \Hom_{\mathcal{C}}(\unitobj, \mathbb{F}_{\mathcal{S}})$ is equal to the left-hand side of~\eqref{ineq:lower-bound-CF}. The claim of this lemma now follows from the injectivity of the map \eqref{eq:cf-subcat-inc}.
\end{proof}

To prove Theorem~\ref{thm:inj-tricen}, we need the following technical lemma:

\begin{lemma}
  \label{lem:triviality-lemma}
  A finite tensor category $\mathcal{C}$ is equivalent to $\mathrm{Vec}$ if and only if
  \begin{equation}
    \label{eq:dim-CF-is-1}
    \dim_k \mathrm{CF}(\mathcal{C}) = 1.
  \end{equation}
\end{lemma}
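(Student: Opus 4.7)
The forward direction is immediate: if $\mathcal{C} \approx \mathrm{Vec}$, then the coend $\mathbb{F} = \int^{X \in \mathcal{C}} X^* \otimes X$ collapses to $\unitobj^* \otimes \unitobj = \unitobj$, so $\mathrm{CF}(\mathcal{C}) = \Hom_{\mathcal{C}}(\unitobj, \unitobj) = k$ is one-dimensional.

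For the converse, suppose $\dim_k \mathrm{CF}(\mathcal{C}) = 1$. An application of Lemma~\ref{lem:lower-bound-CF} yields $\#\{i : V_i \cong V_i^{**}\} \le 1$, and since $\unitobj \cong \unitobj^{**}$ always, this count is exactly $1$; hence $V_0 = \unitobj$ is the unique $(-)^{**}$-fixed simple of $\mathcal{C}$.

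The plan is then a three-stage reduction to $\mathcal{C} \approx \mathrm{Vec}$. First (unimodularity), the distinguished invertible object $\alpha \in \mathcal{C}$ satisfies $\alpha^{**} \cong \alpha$---invertibility forces $\alpha^* \cong \alpha^{-1}$, hence $\alpha^{**} \cong (\alpha^{-1})^{-1} \cong \alpha$---so $\alpha$ is $(-)^{**}$-fixed, and uniqueness gives $\alpha \cong \unitobj$. Second (semisimplicity), the integral theory for unimodular finite tensor categories of~\cite{2015arXiv150401178S} provides a non-zero cointegral $\lambda_{\mathcal{C}} \in \mathrm{CF}(\mathcal{C})$; in the non-semisimple case a Maschke-type criterion gives $\varepsilon \circ \lambda_{\mathcal{C}} = 0$, while the coalgebra unit $u = i_{\mathcal{C}}(\unitobj; \unitobj)$ satisfies $\varepsilon \circ u = \id_{\unitobj}$, so $\{u, \lambda_{\mathcal{C}}\}$ would be linearly independent in $\mathrm{CF}(\mathcal{C})$---a contradiction. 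Third (triviality), the dimension equality $\dim_k \mathrm{CE}(\mathcal{C}) = \dim_k \mathrm{CF}(\mathcal{C})$ valid for unimodular finite tensor categories (proved in~\cite{2015arXiv150401178S}, as also noted in the remark preceding Remark~\ref{rem:Sw-4-dim-Hopf}) yields $\dim_k \mathrm{CE}(\mathcal{C}) = 1$; but since $\mathcal{C}$ is now semisimple, $\mathrm{CE}(\mathcal{C}) = \End(\id_{\mathcal{C}}) \cong k^{m+1}$ where $m + 1$ is the number of isomorphism classes of simple objects, so $m + 1 = 1$ and $\mathcal{C} \approx \mathrm{Vec}$.

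The principal obstacle is the second stage, where the Maschke-type vanishing $\varepsilon \circ \lambda_{\mathcal{C}} = 0$ in the non-semisimple case must be extracted from the integral-theoretic machinery of~\cite{2015arXiv150401178S}; once that is secured, the first and third stages are essentially formal consequences of Lemma~\ref{lem:lower-bound-CF} and the unimodular dimension identity.
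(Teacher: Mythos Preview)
Your proof is correct and follows the same three-stage reduction as the paper (unimodularity via the distinguished invertible object and Lemma~\ref{lem:lower-bound-CF}, then semisimplicity via the Maschke-type theory of~\cite{2015arXiv150401178S}, then triviality). The implementations of the last two stages differ slightly: for semisimplicity the paper passes through the Fourier transform $\mathrm{CE}(\mathcal{C}) \cong \mathrm{CF}(\mathcal{C})$ and invokes the ``no non-zero nilpotents in $\mathrm{CE}(\mathcal{C})$'' form of Maschke, whereas you argue directly in $\mathrm{CF}(\mathcal{C})$ via the cointegral/counit pairing; for triviality the paper uses that every simple in a fusion category satisfies $V \cong V^{**}$ and reapplies Lemma~\ref{lem:lower-bound-CF}, whereas you count $\dim_k \mathrm{CE}(\mathcal{C}) = m+1$ in the semisimple case. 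These are interchangeable variations resting on the same cited machinery, not a genuinely different route.
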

\begin{proof}
  The `only if' part follows from \eqref{eq:CF-interpretation} with $H = k$. The `if' part is proved as follows: Suppose that \eqref{eq:dim-CF-is-1} holds. Let $D \in \mathcal{C}$ be the {\em distinguished invertible object} introduced in \cite{MR2097289} as a categorical analogue of the modular function. As it is invertible, $D$ is a simple object such that $D \cong D^{**}$. Thus, by Lemma~\ref{lem:lower-bound-CF}, we have $D \cong \unitobj$. This means that $\mathcal{C}$ is {\em unimodular} in the sense of  \cite{MR2097289}.

  We have introduced the notions of integrals and cointegrals for unimodular finite tensor categories in \cite{2015arXiv150401178S}. As an application of these notions, we have established the Maschke-type theorem for such categories \cite[Proposition 5.9]{2015arXiv150401178S}. The theorem implies that $\mathcal{C}$ is semisimple if and only if the algebra $\mathrm{CE}(\mathcal{C})$ ($\cong \End(\id_{\mathcal{C}})$) has no non-zero nilpotent elements. Now we have isomorphisms
  \begin{equation*}
    \mathrm{CE}(\mathcal{C}) \cong \mathrm{CF}(\mathcal{C}) \cong k
  \end{equation*}
  of vector spaces by \eqref{eq:dim-CF-is-1} and the Fourier transform \cite[Definition 5.11]{2015arXiv150401178S}. Thus $\mathrm{CE}(\mathcal{C})$ is isomorphic to $k$ as an algebra. By the above-mentioned fact, we conclude that $\mathcal{C}$ is semisimple.

  Since $\mathcal{C}$ is semisimple, every simple object $V \in \mathcal{C}$ satisfies $V \cong V^{**}$. Thus, again by Lemma~\ref{lem:lower-bound-CF}, we see that the unit object of $\mathcal{C}$ is the unique simple object of $\mathcal{C}$ (up to isomorphisms). This means that $\mathcal{C} \approx \mathrm{Vec}$. The proof is completed.
\end{proof}

\subsection{Proof of Theorem~\ref{thm:inj-tricen}}
\label{subsec:proof-thm-inj-tricen}

We give a proof of Theorem~\ref{thm:inj-tricen}. Let $\mathcal{C}$ be a braided finite tensor category, and set $\mathcal{S} = \mathcal{C}'$. Then we have
\begin{equation}
  \label{eq:omega-centralizer}
  \omega_{\mathcal{C}} \circ (\phi_{\mathcal{S}} \otimes \id_{\mathbb{F}}) = (\varepsilon \circ \phi_{\mathcal{S}}) \otimes \varepsilon,
\end{equation}
where $\varepsilon: \mathbb{F} \to \unitobj$ is the counit. Indeed, for all $X \in \mathcal{S}$ and $Y \in \mathcal{C}$, we have
\begin{align*}
  & \omega_{\mathcal{C}} \circ (\phi_{\mathcal{S}} \otimes \id_{\mathbb{F}}) \circ (i_{\mathcal{S}}(X) \otimes i_{\mathcal{C}}(Y)) \\
  & = (\eval_X \otimes \eval_Y) \circ (\id_{X^*} \otimes \sigma_{Y^*,X} \sigma_{X,Y^*} \otimes \id_{Y})
  & & \text{(by~\eqref{eq:F-def-omega},~\eqref{eq:coend-can-inc})} \\
  & = \phantom{(}\eval_X \otimes \eval_Y
  & & \text{(by the definition of $\mathcal{C}'$)} \\
  & = (\varepsilon \phi_{\mathcal{S}} \otimes \varepsilon) \circ (i_{\mathcal{S}}(X) \otimes i_{\mathcal{C}}(Y))
  & & \text{(by~\eqref{eq:F-def-coalg},~\eqref{eq:coend-can-inc})}.
\end{align*}
For $f \in \mathrm{CF}(\mathcal{S})$, we define $c(f) \in k$ by $c(f) \cdot \id_{\unitobj} = \varepsilon \circ \phi_{\mathcal{S}} \circ f$. By~\eqref{eq:omega-centralizer}, the map
\begin{equation}
  \label{eq:cf-subcat-inc-2}
  \mathrm{CF}(\mathcal{S})
  = \Hom_{\mathcal{C}}(\unitobj, \mathbb{F}_{\mathcal{S}})
  \xrightarrow{\quad \eqref{eq:cf-subcat-inc} \quad} \mathrm{CF}(\mathcal{C})
  \xrightarrow{\quad \Omega_{\mathcal{C}} \quad}
  \mathrm{CE}(\mathcal{C})
\end{equation}
sends $f \in \mathrm{CF}(\mathcal{S})$ to $c(f) \cdot \varepsilon$. If $f = u$ is the unit of $\mathbb{F}_{\mathcal{S}}$, then $c(f) = 1$. Hence the image of $\mathrm{CF}(\mathcal{S})$ under~\eqref{eq:cf-subcat-inc-2} is a one-dimensional subspace spanned by $\varepsilon$.

Now suppose that $\Omega_{\mathcal{C}}$ is injective. Then \eqref{eq:cf-subcat-inc-2} is also injective as the composition of injective maps. Hence, by the above argument, $\mathrm{CF}(\mathcal{S})$ is one-dimensional. Thus we apply Lemma~\ref{lem:triviality-lemma} to obtain $\mathcal{C}' = \mathcal{S} \approx \mathrm{Vec}$. The proof is done.

\subsection{On the rank of $\Omega_{\mathcal{C}}$}

Let $\mathcal{C}$ be a braided finite tensor category, and let $\Omega_{\mathcal{C}}$ be the linear map defined by~\eqref{eq:def-Omega-C}. In view of our results, the rank of $\Omega_{\mathcal{C}}$ seems to be an important invariant of $\mathcal{C}$. We consider the subspace
\begin{equation*}
  \Xi_{\mathcal{C}} := \{ \xi \in \End(\id_{\mathcal{C}})
  \mid \text{$\xi_{V \otimes X} = \xi_V \otimes \id_X$ for all $V \in \mathcal{C}$ and $X \in \mathcal{C}'$} \}
\end{equation*}
of $\End(\id_{\mathcal{C}})$. Then the rank of $\Omega_{\mathcal{C}}$ is bounded from the above as follows:

\begin{proposition}
  \label{prop:rank-Omega}
  $\mathrm{rank} \, \Omega_{\mathcal{C}} \le \dim_k \Xi_{\mathcal{C}}$.
\end{proposition}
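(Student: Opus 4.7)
The plan is to show that the image of $\Omega_{\mathcal{C}}$ is contained in $\Xi_{\mathcal{C}}$ under the canonical identification $\mathrm{CE}(\mathcal{C}) \cong \End(\id_{\mathcal{C}})$ of~\eqref{eq:CE-interpretation}; the rank bound follows immediately. Fix $f \in \mathrm{CF}(\mathcal{C})$ and set $g = \Omega_{\mathcal{C}}(f) = \omega_{\mathcal{C}} \circ (f \otimes \id_{\mathbb{F}})$. Writing $\hat{g}_V := g \circ i_{\mathcal{C}}(\unitobj; V) \colon V^* \otimes V \to \unitobj$, the endomorphism $\xi^g \in \End(\id_{\mathcal{C}})$ corresponding to $g$ has $V$-component
\begin{equation*}
  \xi^g_V = (\id_V \otimes \hat{g}_V) \circ (\coev_V \otimes \id_V),
\end{equation*}
so the task is to verify $\xi^g_{V \otimes X} = \xi^g_V \otimes \id_X$ for every $V \in \mathcal{C}$ and $X \in \mathcal{C}'$.

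The first step is to simplify $\hat{g}_{V \otimes X}$. The multiplication formula~\eqref{eq:F-def-mult} supplies the decomposition
\begin{equation*}
  i_{\mathcal{C}}(\unitobj; V \otimes X) = m \circ (i_{\mathcal{C}}(\unitobj; V) \otimes i_{\mathcal{C}}(\unitobj; X)) \circ (\sigma_{V^* \otimes V, X^*}^{-1} \otimes \id_X),
\end{equation*}
while the Hopf pairing axiom~\eqref{eq:F-Hopf-pairing-2} rewrites $g \circ m$ in terms of $\Delta \circ f$. Next I invoke the second-slot analogue of~\eqref{eq:omega-centralizer}, which holds by the same argument used in \S\ref{subsec:proof-thm-inj-tricen}: indeed, $X \in \mathcal{C}'$ forces $X^* \in \mathcal{C}'$, and therefore the double braiding in~\eqref{eq:F-def-omega} is trivial. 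Combined with the counit axiom for $\mathbb{F}$, this collapses the Sweedler sum and yields the key identity
\begin{equation*}
  g \circ m \circ (\id_{\mathbb{F}} \otimes i_{\mathcal{C}}(\unitobj; X)) = g \otimes \eval_X.
\end{equation*}
The remaining braiding factor $\sigma^{-1}_{V^* \otimes V, X^*}$ is eliminated by naturality of the braiding applied to the morphism $\hat{g}_V \colon V^* \otimes V \to \unitobj$ (together with $\sigma_{\unitobj, X^*} = \id_{X^*}$), which gives
\begin{equation*}
  \hat{g}_{V \otimes X} = \eval_X \circ (\id_{X^*} \otimes \hat{g}_V \otimes \id_X).
\end{equation*}

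Substituting this into the formula for $\xi^g_{V \otimes X}$ and expanding $\coev_{V \otimes X} = (\id_V \otimes \coev_X \otimes \id_{V^*}) \circ \coev_V$, two independent zig-zag identities appear: one on $V$ using $\coev_V$ and $\hat{g}_V$ (producing $\xi^g_V$ on the first tensor factor), and one on $X$ using $\coev_X$ and $\eval_X$ (producing $\id_X$ on the second). This factorises the result as $\xi^g_V \otimes \id_X$, so $\xi^g \in \Xi_{\mathcal{C}}$ and consequently $\mathrm{rank}\, \Omega_{\mathcal{C}} \le \dim_k \Xi_{\mathcal{C}}$. The main task is assembling the computation cleanly; the essential observation is that \eqref{eq:omega-centralizer} admits a second-slot version, which is precisely what allows the $X$-contribution to decouple into $\eval_X$ and thereby detach from the $V$-contribution.
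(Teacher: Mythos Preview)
Your proof is correct and follows essentially the same route as the paper's own argument: both show that the image of $\Omega_{\mathcal{C}}$ lands in $\Xi_{\mathcal{C}}$ by decomposing $i_{\mathcal{C}}(\unitobj; V \otimes X)$ via the multiplication~\eqref{eq:F-def-mult}, applying the Hopf pairing axiom~\eqref{eq:F-Hopf-pairing-2}, and then invoking the second-slot analogue of~\eqref{eq:omega-centralizer} (which the paper records explicitly as an auxiliary identity) to decouple the $X$-contribution into $\eval_X$. The paper packages the final simplification into a string-diagram figure rather than your explicit zig-zag computation, but the content is the same.
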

\begin{proof}
  We prove the claim by showing that the image of the linear map
  \begin{equation}
    \label{eq:rank-Omega-pf-1}
    \mathrm{CF}(\mathcal{C})
    \xrightarrow{\quad \Omega_{\mathcal{C}} \quad}
    \mathrm{CE}(\mathcal{C})
    \xrightarrow[\cong]{\quad \eqref{eq:CE-interpretation} \quad}
    \End(\id_{\mathcal{C}})
  \end{equation}
  is contained in $\Xi_{\mathcal{C}}$. Let $\xi^{(f)} \in \End(\id_{\mathcal{C}})$ be the image of $f \in \mathrm{CF}(\mathcal{C})$ under~\eqref{eq:rank-Omega-pf-1}. By definition, $\xi^{(f)}$ is the natural transformation given by
  \begin{equation*}
    \xi^{(f)}_{V} = (\id_{V} \otimes \Omega_{\mathcal{C}}(f)) \circ (\id_V \otimes i(V)) \circ (\coev_V \otimes \id_V)
    \quad (V \in \mathcal{C}),
  \end{equation*}
  where $i(V): V^* \otimes V \to \mathbb{F}$ ($V \in \mathcal{C}$) is the universal dinatural transformation for the coend $\mathbb{F}$. We prove the equation
  \begin{equation}
    \label{eq:rank-Omega-pf-5}
    \xi_{V \otimes X}^{(f)} = \xi_V^{(f)} \otimes \id_X
    \quad (V \in \mathcal{C}, X \in \mathcal{C}')
  \end{equation}
  as in Figure~\ref{fig:proof-rank-Omega-pf-5}. The first and the last diagrams in the figure express the left-hand side and the right-hand side of~\eqref{eq:rank-Omega-pf-5}, respectively. The first and the third equalities of Figure~\ref{fig:proof-rank-Omega-pf-5} follow from the definition of the structure morphisms of $\mathbb{F}$. To verify the second equality of Figure~\ref{fig:proof-rank-Omega-pf-5}, we prove
  \begin{equation}
    \label{eq:rank-Omega-pf-4}
    \omega \circ (\id_{\mathbb{F}} \otimes i(X)) = (\varepsilon \otimes \varepsilon) \circ (\id_{\mathbb{F}} \otimes i(X))
  \end{equation}
  in a similar way as~\eqref{eq:omega-centralizer}. Now the equality is verified as follows:
  \begin{align*}
    & \omega \circ (\id_{\mathbb{F}} \otimes m) \circ (\id_{\mathbb{F}} \otimes i(V) \otimes i(X)) \\
    & = \omega
      \circ (\id_{\mathbb{F}} \otimes \omega \otimes \id_{\mathbb{F}})
      \circ (\Delta \otimes i(V) \otimes i(X))
    & & \text{(by~\eqref{eq:F-Hopf-pairing-2})} \\
    & = (\varepsilon \otimes \varepsilon)
      \circ (\id_{\mathbb{F}} \otimes \omega \otimes \id_{\mathbb{F}})
      \circ (\Delta \otimes i(V) \otimes i(X))
    & & \text{(by~\eqref{eq:rank-Omega-pf-4})} \\
    & = (\omega \otimes \varepsilon) \circ (\id_{\mathbb{F}} \otimes i(V) \otimes i(X)).
  \end{align*}
  Hence $\xi^{(f)} \in \Xi_{\mathcal{C}}$ for all $f \in \mathrm{CF}(\mathcal{C})$. The proof is done.
\end{proof}

\begin{figure}
  \input{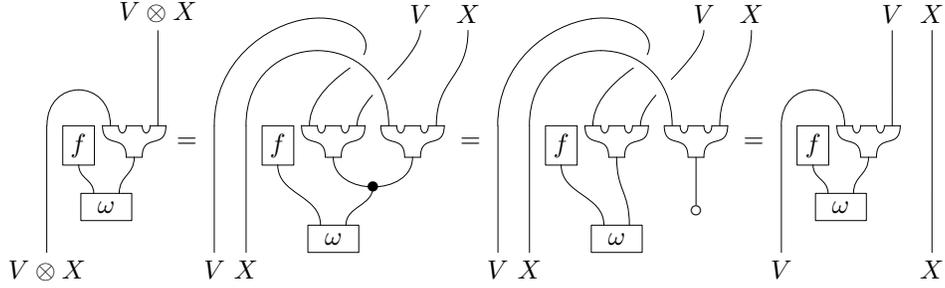}
  \caption{Proof of Equation~\eqref{eq:rank-Omega-pf-5}}
  \label{fig:proof-rank-Omega-pf-5}
\end{figure}

\begin{remark}
  Proposition~\ref{prop:rank-Omega} is motivated by \cite[Theorem 3.4]{MR2609644}, which states that the rank of the $S$-matrix of a ribbon fusion category $\mathcal{F}$ is equal to the number of $\mathcal{F}'$-components of $\mathcal{F}$. In view of \cite[Theorem 3.4]{MR2609644}, we might conjecture that the image of \eqref{eq:rank-Omega-pf-1} is precisely $\Xi_{\mathcal{C}}$, and thus the equation
  \begin{equation}
    \label{eq:rank-Omega-eq}
    \mathrm{rank} \, \Omega_{\mathcal{C}} = \dim_k \Xi_{\mathcal{C}}
  \end{equation}
  holds. Our results show that~\eqref{eq:rank-Omega-eq} holds if $\mathcal{C}$ is non-degenerate (but the converse does not by Remark~\ref{rem:Sw-4-dim-Hopf}). Our arguments in \S\ref{subsec:proof-thm-inj-tricen} imply that \eqref{eq:rank-Omega-eq} holds if $\mathcal{C}$ is symmetric.
\end{remark}

\section{Non-degeneracy of the Yetter-Drinfeld category}
\label{sec:non-dege-YD}

\subsection{The Yetter-Drinfeld category}

Let $\mathcal{C}$ be a braided finite tensor category with braiding $\sigma$, and let $B$ be a Hopf algebra in $\mathcal{C}$ with multiplication $m$, unit $u$, comultiplication $\Delta$, counit $\varepsilon$ and antipode $S$ (note that $S$ is invertible in this case \cite{MR1685417}). The structure morphisms of $B$ will be depicted as in Figure~\ref{fig:gra-cal} of \S\ref{subsec:rep-th-of-F}.
For $M \in \mathcal{C}_B$ and $N \in \mathcal{C}^B$, the action of $M$ and the coaction of $N$ are illustrated as in Figure~\ref{fig:gra-cal-act}.

\begin{definition}
  A (right-right) Yetter-Drinfeld module over $B$ ($=$ a crossed module of Bespalov \cite{MR1456522}) is an object $M \in \mathcal{C}$
  endowed with a right action $\triangleleft_M$ and a right coaction $\delta_M$ of $B$ satisfying the Yetter-Drinfeld condition
  \begin{equation}
    \label{eq:YD-cond}
    \begin{gathered}
      (\id_M \otimes m) \circ (\sigma_{B,M} \otimes \id_B) \circ (\id_B \otimes \delta_M \triangleleft_M) \circ (\sigma_{M,B} \otimes \id_B) \circ (\id_M \otimes \Delta) \\
      = (\triangleleft_M \otimes m) \circ (\id_M \otimes \sigma_{M,B} \otimes \id_B) \circ (\delta_M \otimes \Delta),
    \end{gathered}
  \end{equation}
  which is illustrated in Figure~\ref{fig:YD-cond}. A morphism of Yetter-Drinfeld modules over $B$ is a morphism between underlying objects that is both $B$-linear and $B$-colinear. We denote by $\mathcal{YD}(\mathcal{C})^B_B$ the category of the Yetter-Drinfeld modules over $B$ and call it the {\em Yetter-Drinfeld category} of $B$.
\end{definition}

\begin{figure}
  \input{fig-S6-1}
  \caption{Graphical notation for (co)actions}
  \label{fig:gra-cal-act}    

  \bigskip
  \input{fig-S6-2}
  \caption{Yetter-Drinfeld condition}
  \label{fig:YD-cond}

  \bigskip
  \input{fig-S6-3}
  \caption{The braiding of the Yetter-Drinfeld category}
  \label{fig:YD-braiding}
\end{figure}

When the action $\triangleleft_M$ and the coaction $\delta_M$ are trivial, the Yetter-Drinfeld condition~\eqref{eq:YD-cond} reduces to $\sigma_{B,M} \circ \sigma_{M,B} = \id_{M \otimes B}$ (see also Figure~\ref{fig:YD-cond}).
Thus an object of the M\"uger center $\mathcal{C}'$ turns into a Yetter-Drinfeld module over $B$ by the trivial action and the trivial coaction. We can regard $\mathcal{C}'$ as a full subcategory of $\mathcal{YD}(\mathcal{C})^B_B$ in this way.

As shown by Bespalov \cite[Section 3]{MR1456522}, the category $\mathcal{YD}(\mathcal{C})^B_B$ is a rigid monoidal category with the monoidal structure inherited from $\mathcal{C}_B$ and $\mathcal{C}^B$. Moreover, it has the braiding $\Sigma$ given by
\begin{equation}
  \label{eq:YD-braiding}
  \Sigma_{M, N} = (\id_N \otimes \triangleleft_M) \circ (\sigma_{M, N} \otimes \id_B) \circ (\id_M \otimes \delta_N)
\end{equation}
for $M, N \in \mathcal{YD}(\mathcal{C})^B_B$, where $\triangleleft_M: M \otimes B \to M$ and $\delta_N: N \to N \otimes B$ are the action and the coaction of $B$ on $M$ and $N$, respectively.
The braiding $\Sigma$ and its inverse are expressed as in Figure~\ref{fig:YD-braiding}.

The main result of this section is:

\begin{theorem}
  \label{thm:YD-br-FTC}
  The braided monoidal category $\mathcal{YD}(\mathcal{C})^B_B$ is in fact a braided finite tensor category with Frobenius-Perron dimension
  \begin{equation*}
    \FPdim(\mathcal{YD}(\mathcal{C})^B_B) = \FPdim(\mathcal{C}) \FPdim(B)^2.
  \end{equation*}
  The M\"uger center of $\mathcal{YD}(\mathcal{C})^B_B$ is given by
  \begin{equation*}
    (\mathcal{YD}(\mathcal{C})^B_B)' = \mathcal{C}',
  \end{equation*}
  where $\mathcal{C}'$ is regarded as a full subcategory of $\mathcal{YD}(\mathcal{C})^B_B$ in the above way.
\end{theorem}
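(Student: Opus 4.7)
The proof divides into a structural part (showing $\mathcal{YD}(\mathcal{C})^B_B$ is a braided finite tensor category of the claimed Frobenius--Perron dimension) and the identification of its M\"uger center with $\mathcal{C}'$.

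For the structural part, I would realize $\mathcal{YD}(\mathcal{C})^B_B$ as the category of right modules over the braided Drinfeld double $\mathcal{D}(B)$, a Hopf algebra in $\mathcal{C}$ with underlying object $B^* \otimes B$ built using the braiding of $\mathcal{C}$ (as constructed by Bespalov and Majid in the braided setting). The forgetful functor $U: \mathcal{YD}(\mathcal{C})^B_B \to \mathcal{C}$ is monadic, with induced monad $-\otimes \mathcal{D}(B)$, a quasitriangular Hopf monad on $\mathcal{C}$; under this identification the tensor product and braiding \eqref{eq:YD-braiding} on $\mathcal{YD}(\mathcal{C})^B_B$ correspond to the standard ones on $\mathcal{C}_{\mathcal{D}(B)}$ coming from the Hopf structure and the $R$-matrix of $\mathcal{D}(B)$. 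Since $\mathcal{D}(B)$ is a Hopf algebra in the finite tensor category $\mathcal{C}$, the category $\mathcal{C}_{\mathcal{D}(B)}$ is automatically a finite tensor category, and $\FPdim(\mathcal{C}_{\mathcal{D}(B)}) = \FPdim(\mathcal{C}) \cdot \FPdim(\mathcal{D}(B)) = \FPdim(\mathcal{C}) \cdot \FPdim(B)^2$, as claimed.

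The easy inclusion $\mathcal{C}' \subseteq (\mathcal{YD}(\mathcal{C})^B_B)'$ is a direct computation. Take $X \in \mathcal{C}'$ embedded in $\mathcal{YD}(\mathcal{C})^B_B$ with trivial action $\triangleleft_X = \id_X \otimes \varepsilon$ and trivial coaction $\delta_X = \id_X \otimes u$; this is a legitimate Yetter--Drinfeld module because the Yetter--Drinfeld condition \eqref{eq:YD-cond} reduces in this case to $\sigma_{B,X} \sigma_{X,B} = \id$, which holds for $X \in \mathcal{C}'$. Unwinding \eqref{eq:YD-braiding} with the counit relation $(\id_N \otimes \varepsilon) \delta_N = \id_N$ and the unit relation $\triangleleft_N \circ (\id_N \otimes u) = \id_N$ gives $\Sigma_{X, N} = \sigma_{X, N}$ and $\Sigma_{N, X} = \sigma_{N, X}$ for every $N \in \mathcal{YD}(\mathcal{C})^B_B$; since $X \in \mathcal{C}'$, the composition $\Sigma_{N, X} \circ \Sigma_{X, N}$ is the identity.

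For the reverse inclusion, given $M \in (\mathcal{YD}(\mathcal{C})^B_B)'$, I must establish: (i) $\triangleleft_M$ is trivial, (ii) $\delta_M$ is trivial, and (iii) $M \in \mathcal{C}'$ in the original sense. The plan is to test the defining identity $\Sigma_{N, M} \circ \Sigma_{M, N} = \id_{M \otimes N}$ against the family of free Yetter--Drinfeld modules $N = L(Y) = Y \otimes \mathcal{D}(B)$ for $Y \in \mathcal{C}$, where $L$ is left adjoint to $U$. The action and coaction on $L(Y)$ are built from the multiplication and comultiplication of $\mathcal{D}(B) = B^* \otimes B$; pre- and post-composing the M\"uger identity with the distinguished unit and counit maps associated to the $B$- and $B^*$-factors of $\mathcal{D}(B)$ isolates (i) and (ii) separately. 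Once (i) and (ii) are established, precomposing the surviving identity with the canonical inclusion $Y \hookrightarrow L(Y)$ given by the unit of $\mathcal{D}(B)$ collapses it to $\sigma_{Y, M} \circ \sigma_{M, Y} = \id$ for all $Y \in \mathcal{C}$, yielding (iii). The hard part will be the disentangling: the composite $\Sigma_{N, M} \Sigma_{M, N}$ intertwines three crossings of $\sigma$ with the two actions and two coactions on $M$ and $L(Y)$, and cleanly separating the three conditions requires careful graphical bookkeeping in the style of \S\ref{subsec:rep-th-of-F}, using the bialgebra axioms for $B$ and the explicit Hopf structure of $\mathcal{D}(B)$.
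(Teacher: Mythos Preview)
Your proposal is correct, but it diverges from the paper in both halves of the argument.

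For the structural part, you invoke the braided Drinfeld double $\mathcal{D}(B)$ and realize $\mathcal{YD}(\mathcal{C})^B_B$ as $\mathcal{C}_{\mathcal{D}(B)}$; finiteness and the Frobenius--Perron dimension then follow from the standard formula $\FPdim(\mathcal{C}_A) = \FPdim(\mathcal{C})\FPdim(A)$ for a Hopf algebra $A$ in $\mathcal{C}$. The paper instead uses the Bespalov--Drabant fundamental theorem for Hopf bimodules: the equivalence $\mathcal{C}_B \approx {}^B_B\mathcal{C}^{}_B$ lifts to $\mathcal{YD}(\mathcal{C})^B_B \approx {}^B_B\mathcal{C}^B_B$, and one reads off that the forgetful functor $U_1: \mathcal{YD}(\mathcal{C})^B_B \to \mathcal{C}_B$ is comonadic for a $k$-linear exact comonad, giving finiteness via Lemma~\ref{lem:finiteness-lemma}; the dimension comes from $B \otimes R_1(M) \cong B \otimes M \otimes B$. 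Your route is more direct once one accepts the braided double as input; the paper's route avoids building $\mathcal{D}(B)$ but needs the Hopf-bimodule equivalence instead.

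For the reverse inclusion in the M\"uger center, the paper's argument is considerably slicker than testing against free modules. It uses two specific Yetter--Drinfeld modules: $P$ is $B$ with regular coaction and adjoint action, and $Q$ is $B$ with regular action and adjoint coaction. These satisfy $\varepsilon \circ \triangleleft_{\mathrm{ad}} = \varepsilon \otimes \varepsilon$ and $\delta_{\mathrm{ad}} \circ u = u \otimes u$, which makes the computations one-liners: $\Sigma_{Q,M} \circ (u \otimes \id_M) = \delta_M$ while $\Sigma_{M,Q}^{-1} \circ (u \otimes \id_M) = \id_M \otimes u$, so the M\"uger condition forces $\delta_M$ trivial; similarly $P$ forces $\triangleleft_M$ trivial. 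Once the action and coaction on $M$ are trivial, $\Sigma_{M,N} = \sigma_{M,N}$ and $\Sigma_{N,M} = \sigma_{N,M}$ for every $N \in \mathcal{YD}(\mathcal{C})^B_B$, and the paper then invokes dominance of the forgetful functor $\mathcal{YD}(\mathcal{C})^B_B \to \mathcal{C}$ (proved separately as Lemma~\ref{lem:YD-dominant}) to extend $\sigma_{N,M}\sigma_{M,N} = \id$ to all $N \in \mathcal{C}$. Your plan of probing with $L(Y) = Y \otimes \mathcal{D}(B)$ and peeling off the $B$- and $B^*$-factors would reach the same conclusions, but the disentangling you anticipate is precisely what the choice of $P$ and $Q$ eliminates.
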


Thus, by Theorems~\ref{thm:factor-nondege} and~\ref{thm:factor-tricen}, the braided finite tensor category $\mathcal{YD}(\mathcal{C})^B_B$ is non-degenerate if and only if $\mathcal{C}$ is. There are also braided monoidal categories ${}_B^B \mathcal{YD}(\mathcal{C})$, ${}_B\mathcal{YD}(\mathcal{C})^B$ and ${}^B\mathcal{YD}(\mathcal{C})_B$ of left-left, left-right, right-left Yetter-Drinfeld modules over $B$. Since these categories are isomorphic to $\mathcal{YD}(\mathcal{C})^B_B$ as braided monoidal categories \cite[Corollary 3.5.5]{MR1456522}, they are also braided finite tensor categories, which are non-degenerate precisely if $\mathcal{C}$ is.

\subsection{The fundamental theorem for Hopf bimodules}

To prove the main theorem of this section, we first recall from \cite{MR1492897} the fundamental theorem for Hopf bimodules 
over a braided Hopf algebra.

Let $\mathcal{C}$ be a braided finite tensor category, and let $B \in \mathcal{C}$ be a Hopf algebra.
The category ${}_B \mathcal{C}_B$ of $B$-bimodules in $\mathcal{C}$ is a finite tensor category with the monoidal structure inherited from ${}_B \mathcal{C}$ and $\mathcal{C}_B$, and the triple $(B, \Delta, \varepsilon)$ is in fact a coalgebra in ${}_B \mathcal{C}_B$.
Thus we can consider the category
\begin{equation*}
  {}^B_B \mathcal{C}_B^{} := {}^B({}_B^{}\mathcal{C}_B^{})
\end{equation*}
of left $B$-comodules in ${}_B\mathcal{C}_B$. If $M \in \mathcal{C}_B$, then $B \otimes M$ is an object of ${}^B_B\mathcal{C}^{}_B$ by the left action $\triangleright$, the right action $\triangleleft$ and the left coaction $\delta$ given respectively by
\begin{equation*}
  \triangleright = m \otimes \id_M,
  \quad \triangleleft = (m \otimes \id_M) \circ (\id_B \otimes \sigma_{M, B})
  \quad \text{and} \quad
  \delta = (\Delta \otimes \id_M).
\end{equation*}
Let $B \ltimes M$ denote the object of ${}_B^{B} \mathcal{C}_B^{}$ obtained from $M \in \mathcal{C}_B$ in this way.
Bespalov and Drabant \cite[Proposition 3.6.3]{MR1492897} showed that the functor
\begin{equation}
  \label{eq:Hopf-bimod-eq-1}
  \mathcal{C}_B
  \xrightarrow{\quad \approx \quad}
  {}^B_B \mathcal{C}_B^{},
  \quad M \mapsto  B \ltimes M
\end{equation}
is an equivalence (the fundamental theorem for Hopf bimodules). 
Let ${}^B_B\mathcal{C}^B_B$ be the category of $B$-bicomodules in ${}_B \mathcal{C}_B$, and let
\begin{equation*}
  U_{1}: \mathcal{YD}(\mathcal{C})^B_B \to \mathcal{C}_B^{}
  \quad \text{and} \quad
  U_{2}: {}^B_B \mathcal{C}^B_B \to {}^B_B \mathcal{C}_B^{}
\end{equation*}
be the forgetful functors. 
Bespalov and Drabant \cite[Theorem 4.3.2]{MR1492897} also showed that the equivalence \eqref{eq:Hopf-bimod-eq-1} lifts to an equivalence
\begin{equation}
  \label{eq:Hopf-bimod-eq-2}
  \mathcal{YD}(\mathcal{C})^B_B
  \xrightarrow{\quad \approx \quad}
  {}^B_B\mathcal{C}^B_B
\end{equation}
of categories such that the following diagram is commutative:
\begin{equation}
  \label{eq:Hopf-bimod-eq-3}
  \xymatrix{
    \mathcal{YD}(\mathcal{C})^B_B \ar[d]_{U_1}
    \ar[rr]^{\text{\eqref{eq:Hopf-bimod-eq-2}}}_{\approx}
    & & {}^B_B \mathcal{C}^B_B\phantom{,}
    \ar[d]^{U_2} \\
    \mathcal{C}_B
    \ar[rr]^{\text{\eqref{eq:Hopf-bimod-eq-1}}}_{\approx}
    & & {}^B_B \mathcal{C}^{}_B.
  }
\end{equation}

\subsection{Finiteness of the Yetter-Drinfeld category}

We keep the notations of the previous subsection.
Now we prove that the Yetter-Drinfeld category $\mathcal{YD}(\mathcal{C})^B_B$ is a braided finite tensor category.
The functor $U_2$ has a right adjoint given by
\begin{equation*}
  R_2: {}_B^B\mathcal{C}_B^{} \to {}^B_B \mathcal{C}^B_B,
  \quad M \mapsto M \otimes B.
\end{equation*}
By the above commutative diagram, $U_1$ also has a right adjoint, say $R_1$.
It is easy to see that $U_2 \circ R_2$ is a $k$-linear exact comonad on ${}^B_B\mathcal{C}_B^{}$ whose category of comodules is precisely ${}^B_B\mathcal{C}^B_B$. Thus, without knowing any explicit description of $R_1$,
we see that $U_1 \circ R_1$ is a $k$-linear exact comonad on $\mathcal{C}_B$
whose category of comodules is equivalent to $\mathcal{YD}(\mathcal{C})^B_B$.
This proves:

\begin{lemma}
  $\mathcal{YD}(\mathcal{C})^B_B$ is a finite tensor category.
\end{lemma}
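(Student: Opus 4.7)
My plan is to apply Lemma~\ref{lem:finiteness-lemma} to the comonad $T := U_1 \circ R_1$ on $\mathcal{C}_B$ that the excerpt constructs via the commutative diagram \eqref{eq:Hopf-bimod-eq-3}, and then verify the remaining axioms for a finite tensor category. The key preparatory observation is already in place in the excerpt: the existence and exactness of the right adjoint $R_2$ of $U_2$ transport along the horizontal equivalences of \eqref{eq:Hopf-bimod-eq-3} to produce a right adjoint $R_1$ of $U_1$, and the fact that $U_2 \circ R_2$ is a $k$-linear exact comonad on ${}_B^B\mathcal{C}_B^{}$ with comodule category ${}_B^B\mathcal{C}_B^B$ transports to the statement that $T$ is a $k$-linear exact comonad on $\mathcal{C}_B$ whose comodule category is equivalent to $\mathcal{YD}(\mathcal{C})^B_B$.

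First I would verify that $\mathcal{C}_B$ itself is a finite abelian category, which is a standard consequence of $B$ being an algebra object in the finite tensor category $\mathcal{C}$ (for instance by exhibiting a projective generator of the form $P \otimes B$, where $P$ is a projective generator of $\mathcal{C}$, and appealing to Eilenberg-Watts). Then Lemma~\ref{lem:finiteness-lemma} applied to the left exact comonad $T$ on $\mathcal{C}_B$ yields at once that the category $(\mathcal{C}_B)^T$, hence $\mathcal{YD}(\mathcal{C})^B_B$, is a finite abelian category, and moreover that the forgetful functor $U_1 : \mathcal{YD}(\mathcal{C})^B_B \to \mathcal{C}_B$ preserves and reflects exact sequences.

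It remains to check the tensor-structural axioms. Rigidity and the braided monoidal structure on $\mathcal{YD}(\mathcal{C})^B_B$ are Bespalov's results \cite{MR1456522}. The tensor product is $k$-bilinear because it is inherited from $\mathcal{C}$, and exactness in each variable follows from exactness of $\otimes$ in $\mathcal{C}$ by composing the reflection-of-exactness properties of the two forgetful functors $\mathcal{YD}(\mathcal{C})^B_B \to \mathcal{C}_B \to \mathcal{C}$ (the second reflects exactness since kernels and cokernels of $B$-module maps in $\mathcal{C}$ are computed in $\mathcal{C}$). Finally, the unit object $\unitobj$ with trivial action and coaction is simple: any subobject of it in $\mathcal{YD}(\mathcal{C})^B_B$ yields, via the composite forgetful functor to $\mathcal{C}$, a subobject of the simple object $\unitobj \in \mathcal{C}$. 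I do not anticipate a substantial obstacle here; the only mildly delicate point is the formal transport of the adjunction $U_2 \dashv R_2$ and its comonad structure across \eqref{eq:Hopf-bimod-eq-3}, which is nothing more than conjugating by equivalences of categories.
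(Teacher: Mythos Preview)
Your proposal is correct and follows essentially the same approach as the paper: apply Lemma~\ref{lem:finiteness-lemma} to the comonad $U_1 \circ R_1$ on $\mathcal{C}_B$, obtained by transporting $U_2 \circ R_2$ across the equivalences of diagram~\eqref{eq:Hopf-bimod-eq-3}, to conclude that $\mathcal{YD}(\mathcal{C})^B_B$ is finite abelian, and then check the remaining tensor-category axioms. The paper simply says the latter are ``verified easily,'' whereas you spell them out, but the argument is the same.
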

\begin{proof}
  By Lemma~\ref{lem:finiteness-lemma} and the above argument, $\mathcal{YD}(\mathcal{C})^B_B$ is a finite abelian category. The other axioms of finite tensor categories are verified easily.
\end{proof}

Next, we compute the Frobenius-Perron dimension of $\mathcal{YD}(\mathcal{C})^B_B$.
The commutative diagram~\eqref{eq:Hopf-bimod-eq-3} is also useful for this purpose.
Indeed, by~\eqref{eq:Hopf-bimod-eq-3}, we have
\begin{equation*}
  B \ltimes R_1(M) \cong (B \ltimes M) \otimes B
\end{equation*}
for all $M \in \mathcal{C}_B$. In particular, $B \otimes R_1(M) \cong B \otimes M \otimes B$ as objects of $\mathcal{C}$.
Thus the Frobenius-Perron dimension of $R_1(M)$ is given by
\begin{equation}
  \label{eq:FPdim-YD-ind}
  \FPdim(R_1(M)) = \FPdim(B) \FPdim(M).
\end{equation}
By~\eqref{eq:FPdim-ind} and~\eqref{eq:FPdim-YD-ind}, we have:

\begin{lemma}
  $\FPdim (\mathcal{YD}(\mathcal{C})^B_B) = \FPdim(\mathcal{C}) \FPdim(B)^2$.
\end{lemma}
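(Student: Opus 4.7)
\medskip

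\noindent\textbf{Proof plan.} The strategy is to apply the induction formula~\eqref{eq:FPdim-ind} to the total forgetful tensor functor
\begin{equation*}
  U : \mathcal{YD}(\mathcal{C})^B_B \to \mathcal{C},
\end{equation*}
which factors as the composition of $U_1 : \mathcal{YD}(\mathcal{C})^B_B \to \mathcal{C}_B$ (already considered above) and the usual forgetful functor $U_0 : \mathcal{C}_B \to \mathcal{C}$. The functor $U$ is a $k$-linear exact strong monoidal functor between finite tensor categories (it is a tensor functor as soon as we know the codomain is a finite tensor category, which is the content of the preceding lemma).

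\medskip

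\noindent\textbf{Step 1: dominance of $U$.} The first step is to verify that $U$ is dominant. This is immediate: any object $X \in \mathcal{C}$ becomes a Yetter--Drinfeld module over $B$ via the trivial action $\id_X \otimes \varepsilon$ and the trivial coaction $\id_X \otimes u$ (the Yetter--Drinfeld condition~\eqref{eq:YD-cond} collapses to a tautology), and $U$ of this object is $X$ itself. Hence every object of $\mathcal{C}$ lies in the image of $U$, so $U$ is dominant.

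\medskip

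\noindent\textbf{Step 2: computing the right adjoint on the unit.} The functor $U_0$ has right adjoint $R_0 : \mathcal{C} \to \mathcal{C}_B$, $X \mapsto X \otimes B$, so a right adjoint of $U$ is $I := R_1 \circ R_0$. In particular
\begin{equation*}
  I(\unitobj) \;=\; R_1(R_0(\unitobj)) \;=\; R_1(B).
\end{equation*}
Applying~\eqref{eq:FPdim-YD-ind} to $M = B \in \mathcal{C}_B$ gives
\begin{equation*}
  \FPdim(I(\unitobj)) \;=\; \FPdim(R_1(B)) \;=\; \FPdim(B)\,\FPdim(B) \;=\; \FPdim(B)^2.
\end{equation*}

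\medskip

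\noindent\textbf{Step 3: applying the induction formula.} Now~\eqref{eq:FPdim-ind}, applied to the dominant tensor functor $U : \mathcal{YD}(\mathcal{C})^B_B \to \mathcal{C}$ with right adjoint $I$, yields
\begin{equation*}
  \FPdim(I(\unitobj)) \;=\; \frac{\FPdim(\mathcal{YD}(\mathcal{C})^B_B)}{\FPdim(\mathcal{C})}\,\FPdim(\unitobj) \;=\; \frac{\FPdim(\mathcal{YD}(\mathcal{C})^B_B)}{\FPdim(\mathcal{C})}.
\end{equation*}
Combining with the evaluation of $\FPdim(I(\unitobj))$ from Step~2 gives the desired equality $\FPdim(\mathcal{YD}(\mathcal{C})^B_B) = \FPdim(\mathcal{C})\,\FPdim(B)^2$.

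\medskip

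\noindent\textbf{Expected obstacle.} There is no serious obstacle: the only non-automatic point is the dominance of $U$, and this is handled by the trivial-(co)action construction above. Everything else is a direct application of results already in place, namely \eqref{eq:FPdim-ind} and \eqref{eq:FPdim-YD-ind}.
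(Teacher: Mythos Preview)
Your Step~1 contains a genuine error. The Yetter--Drinfeld condition~\eqref{eq:YD-cond} with trivial action and trivial coaction does \emph{not} collapse to a tautology: as the paper notes just after~\eqref{eq:YD-cond}, it reduces to $\sigma_{B,M}\circ\sigma_{M,B}=\id_{M\otimes B}$. So the trivial-(co)action construction produces a Yetter--Drinfeld module only when $M$ centralizes $B$ in the M\"uger sense, and your argument for dominance of $U$ fails for general $M\in\mathcal{C}$.

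Dominance of $U$ is nevertheless true, and the correct route is via faithfulness of the adjoint rather than by exhibiting preimages. The right adjoint $R_2:{}^B_B\mathcal{C}_B^{}\to{}^B_B\mathcal{C}^B_B$, $M\mapsto M\otimes B$, is faithful because $B\neq 0$ and tensoring with a nonzero object in a finite tensor category is faithful; by the commutative square~\eqref{eq:Hopf-bimod-eq-3} the same holds for $R_1$, so $U_1$ is dominant. Likewise $U_0:\mathcal{C}_B\to\mathcal{C}$ is dominant since its left adjoint $X\mapsto X\otimes B$ is faithful. Hence $U=U_0\circ U_1$ is dominant. With this repair, your Steps~2 and~3 go through unchanged and the proof is complete. (The paper's own argument is essentially the same, applying~\eqref{eq:FPdim-ind} together with~\eqref{eq:FPdim-YD-ind}; the dominance argument just sketched is the content of its Lemma~\ref{lem:YD-dominant}.)
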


For later use, we note:

\begin{lemma}
  \label{lem:YD-dominant}
  The forgetful functor $U: \mathcal{YD}(\mathcal{C})_B^B \to \mathcal{C}$ is dominant.
\end{lemma}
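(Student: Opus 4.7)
My plan is to factor the forgetful functor as $U = U_0 \circ U_1$, where $U_1: \mathcal{YD}(\mathcal{C})_B^B \to \mathcal{C}_B$ forgets the $B$-coaction and $U_0: \mathcal{C}_B \to \mathcal{C}$ forgets the $B$-action. Since $U$ is a $k$-linear exact strong monoidal functor between finite tensor categories, the criterion recalled in \S\ref{sec:preliminaries} reduces dominance of $U$ to showing that every $X \in \mathcal{C}$ is a quotient of $U(N)$ for some Yetter-Drinfeld module $N$.

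First I would handle $U_0$: for each $X \in \mathcal{C}$, the free right $B$-module $M_X := X \otimes B$ with action $\id_X \otimes m$ satisfies $U_0(M_X) = X \otimes B$, and $\id_X \otimes \varepsilon: X \otimes B \to X$ is a split epimorphism in $\mathcal{C}$ (with section $\id_X \otimes u$). Hence $X$ is a quotient of $U_0(M_X)$.

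Next I would handle $U_1$ by means of its right adjoint $R_1$, whose existence is established in the paper's discussion of finiteness. The formula $B \ltimes R_1(M) \cong (B \ltimes M) \otimes B$ used to compute the Frobenius-Perron dimension, together with the commutativity of \eqref{eq:Hopf-bimod-eq-3} and the inverse of the Bespalov-Drabant equivalence \eqref{eq:Hopf-bimod-eq-1}, identifies $U_1 R_1(M) \cong M \otimes B$ in $\mathcal{C}_B$ and the counit $\epsilon_M: U_1 R_1(M) \to M$ with $\id_M \otimes \varepsilon$. This counit is therefore a split epimorphism in $\mathcal{C}$ (split by $\id_M \otimes u$). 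Taking $M = M_X$ and $N := R_1(M_X)$, the exactness of $U_0$ then yields a composition of epimorphisms in $\mathcal{C}$,
\begin{equation*}
U(N) = U_0 U_1(N) \twoheadrightarrow U_0(M_X) = X \otimes B \twoheadrightarrow X,
\end{equation*}
exhibiting $X$ as a quotient of $U(N)$ and completing the argument.

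The delicate step is the identification of the counit $\epsilon_M$ on the level of underlying objects. Because the equivalence \eqref{eq:Hopf-bimod-eq-1} sends $M \in \mathcal{C}_B$ to $B \otimes M$, it alters underlying objects, so one has to invert it carefully to transfer the manifest formula for the counit of $U_2 \dashv R_2$ in the bimodule setting down to $\mathcal{C}_B$. A hands-on alternative is to equip $M \otimes B$ directly with the ``cofree'' Yetter-Drinfeld structure (right coaction $\id_M \otimes \Delta$ and an appropriately braided diagonal right action) and verify the Yetter-Drinfeld condition \eqref{eq:YD-cond} by a graphical calculation, after which $\id_M \otimes \varepsilon$ is visibly a morphism in $\mathcal{YD}(\mathcal{C})_B^B$ and does the required job without invoking $R_1$ at all.
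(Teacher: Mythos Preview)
Your decomposition $U = U_0 \circ U_1$ matches the paper's, and your argument is correct in substance. The difference is in how the dominance of $U_1$ is established. You try to identify the counit $\epsilon_M : U_1 R_1(M) \to M$ explicitly as $\id_M \otimes \varepsilon$, which (as you acknowledge) is the delicate step: it requires inverting the Bespalov--Drabant equivalence carefully, or alternatively building the cofree Yetter--Drinfeld structure on $M \otimes B$ by hand and checking~\eqref{eq:YD-cond}.

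The paper sidesteps this entirely. It invokes the characterization from \S\ref{sec:preliminaries} that a tensor functor is dominant if and only if its right (or left) adjoint is \emph{faithful}. Since $R_2(N) = N \otimes B$ and tensoring with a nonzero object is faithful in any finite tensor category, $R_2$ is faithful; the commutative square~\eqref{eq:Hopf-bimod-eq-3} then transports faithfulness from $R_2$ to $R_1$, so $U_1$ is dominant. Likewise $U_0 = U_B$ is dominant because its left adjoint, the free-module functor $(-)\otimes B$, is faithful. No explicit description of $R_1$ or its counit is needed---only that $R_1$ exists and is intertwined with $R_2$ by equivalences. Your approach works but does more than necessary; the paper's is shorter precisely because the faithfulness criterion lets one reason at the level of functors rather than individual morphisms.
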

\begin{proof}
  Let $U_{B}: \mathcal{C}_B \to \mathcal{C}$ be the forgetful functor. Then $U$ is decomposed as:
  \begin{equation*}
    U: \mathcal{YD}(\mathcal{C})_B^B
    \xrightarrow{\quad U_2 \quad} \mathcal{C}_B
    \xrightarrow{\quad U_B \quad} \mathcal{C}.
  \end{equation*}
  If $X$ is a non-zero object of a finite tensor category, then the endofunctors $X \otimes (-)$ and $(-) \otimes X$ on the finite tensor category are faithful. Thus $R_2$ is faithful. By the commutative diagram~\eqref{eq:FPdim-YD-ind}, $R_1$ is also faithful, and hence the tensor functor $U_2$ is dominant. Similarly, since a left adjoint of $U_B$ ({\it i.e.}, the free $B$-module functor) is faithful, the tensor functor $U_B$ is dominant. Thus $U$ is also dominant as the composition of $k$-linear left exact dominant functors.
\end{proof}

\subsection{The M\"uger center of the Yetter-Drinfeld category}

To complete the proof of Theorem~\ref{thm:YD-br-FTC}, we compute the M\"uger center of $\mathcal{YD}(\mathcal{C})^B_B$.
If $M \in \mathcal{C}'$, then
\begin{equation*}
  \Sigma_{X,M} \circ \Sigma_{M,X} = \sigma_{X,M} \circ \sigma_{M,X} = \id_{M \otimes X}
\end{equation*}
for all $X \in \mathcal{YD}(\mathcal{C})^B_B$. Thus we have
\begin{equation*}
  \mathcal{C}' \subset (\mathcal{YD}(\mathcal{C})^B_B)'.
\end{equation*}
To prove the converse inclusion, we recall from \cite[Lemma 3.9.3]{MR1456522} that there are the Yetter-Drinfeld $B$-modules $P$ and $Q$ defined as follows:
\begin{enumerate}
\item The Yetter-Drinfeld module $P$ is the object $B \in \mathcal{C}$ with the coaction given by the comultiplication.
  The action $\triangleleft_{\mathrm{ad}}$ of $B$ on $P$ is given by
  \begin{equation*}
    \triangleleft_{\mathrm{ad}}
    = m \circ (S \otimes m) \circ (\sigma_{B,B} \otimes \id_B) \circ (\id_B \otimes \Delta).
  \end{equation*}
\item The Yetter-Drinfeld module $Q$ is the object $B \in \mathcal{C}$ with the action given by the multiplication.
  The coaction $\delta_{\mathrm{ad}}$ of $B$ on $Q$ is given by
  \begin{equation*}
    \delta_{\mathrm{ad}}
    = (\id_B \otimes m) \circ (\sigma_{B,B} \otimes \id_B) \circ (S \otimes \Delta) \circ \Delta.
  \end{equation*}
\end{enumerate}
Observe that the following equations hold:
\begin{equation*}
  \varepsilon \circ \triangleleft_{\mathrm{ad}} = \varepsilon \otimes \varepsilon
  \quad \text{and} \quad
  \delta_{\mathrm{ad}} \circ u = u \otimes u.
\end{equation*}
Now let $M$ be a Yetter-Drinfeld module over $B$ with action $\triangleleft_{M}$ and coaction $\delta_M$.
By the definition of the braiding $\Sigma$, we have the following four equations:
\begin{align*}
  \Sigma_{Q, M} \circ (u \otimes \id_M) & = \delta_M, &
  (\varepsilon \otimes \id_B) \circ \Sigma_{M,P} & = \triangleleft_M, \\
  \Sigma_{M, Q}^{-1} \circ (u \otimes \id_M) & = \id_M \otimes u, &
  (\varepsilon \otimes \id_B) \circ \Sigma_{P,M}^{-1} & = \id_M \otimes \varepsilon.
\end{align*}
Suppose that $M \in (\mathcal{YD}(\mathcal{C})^B_B)'$. Then we have
\begin{equation*}
  \delta_M
  = \Sigma_{Q, M} \circ (u \otimes \id_M)
  = \Sigma_{M, Q}^{-1} \circ (u \otimes \id_M) = \id_M \otimes u
\end{equation*}
and, in a similar way, $\triangleleft_M = \id_M \otimes \varepsilon$.
By the definition of the braiding $\Sigma$, we have $\Sigma_{N,M} = \sigma_{N,M}$ and $\Sigma_{M,N} = \sigma_{M,N}$ for all $N \in \mathcal{YD}(\mathcal{C})^B_B$. The assumption that $M$ belongs to the M\"uger center turns into:
\begin{equation*}
  \sigma_{N,M} \sigma_{M,N} = \id_{M \otimes N}
  \text{\quad for all $N \in \mathcal{YD}(\mathcal{C})^B_B$}.
\end{equation*}
By Lemma~\ref{lem:YD-dominant} and the naturality of the braiding $\sigma$, we see that this equation actually holds for all objects $N \in \mathcal{C}$.
Hence $M \in \mathcal{C}'$.
This shows
\begin{equation*}
  (\mathcal{YD}(\mathcal{C})^B_B)' \subset \mathcal{C}'.
\end{equation*}
The proof is done.

\subsection{Factorizable Hopf algebras}
\label{sec:fact-hopf-alg}

There are some constructions of Hopf algebras with triangular decomposition (such as the quantized universal enveloping algebra of a semsimple Lie algebra) from a Hopf algebra in a braided monoidal category \cite{MR1045735,MR1645545,MR1383798}. Combining such constructions with our result, we obtain a source of factorizable Hopf algebras: Let $H$ be a finite-dimensional factorizable Hopf algebra, and let $B$ be a Hopf algebra in $\mathcal{C} = \Rep(H)$. Since ${}^B\mathcal{YD}(\mathcal{C})_B$ has a fiber functor, there is a unique (up to isomorphism) quasitriangular Hopf algebra $U$ such that there is a $k$-linear braided monoidal equivalence
\begin{equation*}
  {}^B\mathcal{YD}(\mathcal{C})_B \approx \Rep(U)
\end{equation*}
commuting with the fiber functors. The quasitriangular Hopf algebra $U$ is given explicitly by the double-bosonization of Majid; see \cite[Appendix B]{MR1645545}. Our results guarantee that $U$ is factorizable. We note that the dimension of the Hopf algebra $U$ may not be a square number, since there is the decomposition
\begin{equation*}
  U = B^* \otimes_k H \otimes_k B
\end{equation*}
of vector spaces by the construction. Thus $U$ cannot be obtained by the Drinfeld double construction in general.

Sommerh\"auser \cite[Section 5]{MR1383798} and Majid \cite[Section 4]{MR1645545} have essentially pointed out that this kind of construction produces the so-called small quantum group $u_q(\mathfrak{g})$. Example~\ref{ex:small-quantum} below gives the detail of the construction with the help of the theory of Nichols algebras \cite{MR1913436,MR2630042}. Our results explain that $u_q(\mathfrak{g})$ with specific $q$ is factorizable (see Lyubashenko \cite{MR1354257} for more general case).

\begin{example}
  \label{ex:small-quantum}
  For simplicity, we assume $k = \mathbb{C}$. Let $A = (a_{i j})_{i, j = 1, \dotsc, m}$ be a Cartan matrix of finite type, and let $D = \mathrm{diag}(d_1, \dotsc, d_m)$ be a diagonal matrix such that $d_i \in \{1, 2, 3\}$ and $d_i a_{i j} = d_j a_{j i}$ for all $i, j$. We fix an odd integer $N > 1$ that is relatively prime to the determinant of $D A$ and consider the group
  \begin{equation*}
    \Gamma := \langle g_1, \dotsc, g_m \mid
    \text{$g_i^N = 1$, $g_i g_j = g_j g_i$ ($i, j = 1, \dotsc, m$)} \rangle.
  \end{equation*}
  Set $q := \exp(2 \pi \sqrt{-1} / N)$. There is the pairing $\langle \, , \, \rangle$ on $\Gamma$ given by
  \begin{equation}
    \label{eq:Gamma-pairing}
    \langle g_1^{i_1} \dotsb g_{m}^{i_m}, g_1^{j_1} \dotsb g_{m}^{j_m} \rangle
    = q^{i_1 j_1 + \dotsb + i_m j_m}
    \quad (i_1, \dotsc, i_m, j_1, \dotsc, j_m \in \mathbb{Z}).
  \end{equation}
  We define the bimultiplicative map $\chi: \Gamma \times \Gamma \to \mathbb{C}^{\times}$ by
  \begin{equation*}
    \chi(g_i, g_j) = q^{d_i a_{i j}}
    \quad (i, j = 1, \dotsc, m).
  \end{equation*}
  Let $H = \mathbb{C}\Gamma$ be the group algebra of $\Gamma$. The category $\Rep(H)$ is equivalent to the category of finite-dimensional $\Gamma$-graded vector spaces by the pairing~\eqref{eq:Gamma-pairing}. Braidings of such categories have been well-understood; see, {\it e.g.}, \cite[\S2.11]{MR2609644}. One can check that $H$ is a factorizable Hopf algebra with the universal R-matrix
  \begin{equation*}
    R = \sum_{g, h \in \Gamma} \chi(g, h) \, e_g \otimes e_h,
    \quad \text{where} \quad
    e_g = \sum_{x \in \Gamma} \overline{\langle g, x \rangle} x.
  \end{equation*}
  There is a $\mathbb{C}\Gamma$-module $V = \mathrm{span}_{\mathbb{C}} \{ x_1, \dotsc, x_m \}$ such that
  \begin{equation*}
    g \cdot x_i = \langle g, g_i \rangle x_i
    \quad (g \in \Gamma, i = 1, \dotsc, m).
  \end{equation*}
  By \cite[Theorem 4.3]{MR1913436}, the Nichols algebra $\mathfrak{B}(V)$ of $V$ is isomorphic to the positive part of the small quantum group $u_q(\mathfrak{g})$ associated to the semisimple Lie algebra $\mathfrak{g}$ corresponding to $A$. If $B = \mathfrak{B}(V)$, then the Hopf algebra $U$ in the above is shown to be isomorphic to $u_q(\mathfrak{g})$ by a similar computation as \cite{MR1645545}.
\end{example}

\bibliographystyle{alpha}

\def\cprime{$'$}

\end{document}